\documentclass[11pt]{amsart}

\usepackage{color}
\usepackage{comment}
\usepackage[margin=1in]{geometry}
\usepackage{graphicx}
\usepackage{yfonts}

\def\eq{\begin{equation}}
\def\endeq{\end{equation}}
\def\bbm{\begin{bmatrix}}
\def\ebm{\end{bmatrix}}
\def\bpm{\begin{pmatrix}}
\def\epm{\end{pmatrix}}
\newcommand{\C}{\mathbb{C}}
\newcommand{\Hankel}{\mathcal{H}}
\newcommand{\bigO}{\mathcal{O}}
\newcommand{\Prob}{\mathbb{P}}
\newcommand{\realR}{\mathbb{R}}
\newcommand{\T}{{\mathbb T}}
\newcommand{\Z}{\mathbb{Z}}

\newcommand{\ga}{\gamma}
\newcommand{\de}{\delta}
\newcommand{\ep}{\epsilon}
\newcommand{\Sg}{\Sigma}
\newcommand{\sg}{\sigma}

\newcommand{\Om}{\Omega}
\newcommand{\om}{\omega}
\newcommand{\hsgn}[1]{\epsilon(#1)}
\newcommand{\lattice}{L_{n, \tau}}

\newtheorem{theo}{Theorem}[section]
\newtheorem{prop}[theo]{Proposition}
\newtheorem{rhp}[theo]{Riemann--Hilbert Problem}
\newtheorem{lem}[theo]{Lemma}

\numberwithin{equation}{section}

\title[The $k$-tacnode process]{The $k$-tacnode process}

\author{Robert Buckingham}

\author{Karl Liechty}

\thanks{The authors thank Dong Wang, who was involved in this project at an early stage. Robert Buckingham was supported by by the National Science Foundation 
through grant DMS-1615718 and by the Charles Phelps Taft Research Center 
through a Faculty Release Fellowship.  Karl Liechty was supported by the 
Simons Foundation through grant \#357872.}

\begin{document}

\begin{abstract} 
The tacnode process is a universal behavior arising in nonintersecting 
particle systems and tiling problems.  For Dyson Brownian bridges, the 
tacnode process describes the grazing collision of two packets of walkers.  
We consider such a Dyson sea on the unit circle with drift.  For any 
$k\in\mathbb{Z}$, we show that an appropriate double scaling of the 
drift and return time leads to a generalization of the tacnode process in 
which $k$ particles are expected to wrap around the circle.  We derive 
winding number probabilities and an expression for the correlation kernel 
in terms of functions related to the generalized Hastings--McLeod solutions to 
the inhomogeneous Painlev\'e-II equation.  The method of proof is asymptotic 
analysis of discrete orthogonal polynomials with a complex weight.

\end{abstract}

%\textcolor{red}{
%To do:
%\begin{itemize}
%\item Read carefully and make changes as needed.
%\end{itemize}
%}

\maketitle

\tableofcontents

\section{Introduction}

Ensembles of nonintersecting particles, growth and tiling processes, random 
matrix models, and related probabilistic systems exhibit a 
variety of non-classical universal stochastic processes.  These include the 
sine process describing bulk spacing \cite{Mehta:2004}, the Airy process at a 
soft edge \cite{TracyW:1994}, the $k$-Airy process for the initial opening of 
a new band at a soft edge 
\cite{BaikBP:2005,Baik:2006,AdlerDv:2009,BertolaBLP:2012,BaikW:2013}, the 
Bessel process 
at a hard edge \cite{TracyW:2007,DesrosiersF:2008,KuijlaarsMW:2011}, and the 
Pearcey process with or without inliers at the merger of two bands 
\cite{TracyW:2006,BleherK:2007,AdlerOv:2010,AdlerDvV:2011,LiechtyW:2016}.  Here we consider 
the \emph{tacnode process} describing the statistics when two band endpoints 
collide and separate without merging.  

The tacnode process first appeared in the paper \cite{AdlerFv:2013} of Adler, Ferrari, and van Moerbeke 
in the analysis of continuous-time 
nonintersecting random walks on $\mathbb{Z}$ with fixed starting and ending 
points.  Their results were expressed in terms of resolvents and Fredholm 
determinants of the Airy integral operator acting on a semi-infinite 
interval.  Very shortly thereafter, Delvaux, Kuijlaars, and Zhang \cite{DelvauxKZ:2011} studied the 
tacnode process for nonintersecting Brownian bridges on the line with two 
fixed starting and ending points.  They analyzed a certain $4\times 4$ 
Riemann--Hilbert problem (see also 
\cite{DuitsG:2013,Kuijlaars:2014,LiechtyW:2016b}) associated to the 
\emph{Hastings--McLeod solution} to the homogeneous Painlev\'e-II equation 
defined below in \eqref{PII-hom} and \eqref{uhm-gen-plus-inf}.  Johansson 
\cite{Johansson:2013} subsequently computed a resolvent form of the tacnode 
kernel for nonintersecting Brownian motions, but it was quite different from the one found in \cite{AdlerFv:2013}. The equivalence between the Airy resolvent formulas appearing in  \cite{AdlerFv:2013} and \cite{Johansson:2013} was shown in \cite{AdlerJv:2014} by computing a scaling limit of the statistics in the domino tiling of the double Aztec 
diamond in two different ways. Ferrari and Vet\H o \cite{Ferrari:2012}  extended the results of Johansson  \cite{Johansson:2013} to the non-symmetric case when there are a different number of 
walkers in the two groups. Then the equivalence of the 
Riemann--Hilbert form of the kernel \cite{DelvauxKZ:2011} and resolvent form of Johansson \cite{Johansson:2013} was shown by Delvaux 
\cite{Delvaux:2013}, who was also able to extend the Riemann--Hilbert formulation to the non-symmetric tacnode kernel.  Liechty and Wang \cite{LiechtyW:2017} showed that 
the even (respectively, odd) parts of the tacnode kernel (with respect to the 
spatial variables) arise in appropriate scaling limits of nonintersecting 
Brownian bridges on an interval with reflecting (respectively, absorbing) 
boundary conditions.  %Adler, Johansson, and van Moerbeke 
%\cite{AdlerJv:2014} showed the tacnode kernel appears in the double Aztec 
%diamond tiling process (see also \cite{AdlerCJv:2015}).  
Transitions between 
the tacnode kernel and the Airy and Pearcey kernels were studied by 
Bertola and Cafasso \cite{BertolaC:2013}, Girotti \cite{Girotti:2014}, and 
Geudens and Zhang \cite{GeudensZ:2015}.

Liechty and Wang \cite{LiechtyW:2016} showed the tacnode process occurs 
in a Dyson sea of nonintersecting Brownian bridges on the unit circle with a 
single starting and ending point.  Suppose $n$ Brownian particles on $\T = \{ e^{i\theta} |\, \theta \in \realR\}$ with diffusion 
parameter (i.e. standard deviation) $n^{-1/2}$ are conditioned to start at 
angle $\theta=0$ at time $t=0$, to return to the starting position at a fixed 
return time $T$, and to not intersect for $0<t<T$.  In the large-$n$ limit,  
if $T<\pi^2$ then there is a portion of 
the unit circle no particles are expected to visit (Figure 
\ref{no-drift-plots}, left panel), whereas if $T>\pi^2$ then there is a time 
interval in which the bulk of particles is expected to cover the entire 
circle (Figure \ref{no-drift-plots}, right panel).  In the critical case 
$T=\pi^2$, particles are expected to visit the far side of the circle, 
but only at the halfway time $t=T/2=\pi^2/2$ (Figure \ref{no-drift-plots}, 
center panel).  The behavior when $T\approx\pi^2$ in a neighborhood 
of $\theta=-\pi$ around the time $t=T/2$ is described by the tacnode 
process.  In the current work, by adding a small amount of drift we 
introduce a generalization of 
the tacnode process (the \emph{$k$-tacnode process}) in which two band 
endpoints collide and separate, after which $k$ particles are expected to 
switch from one endpoint to the other (Figure \ref{random-walk-plots}, right 
panel).  
\begin{figure}[h]
\begin{center}
\includegraphics[height=1.8in]{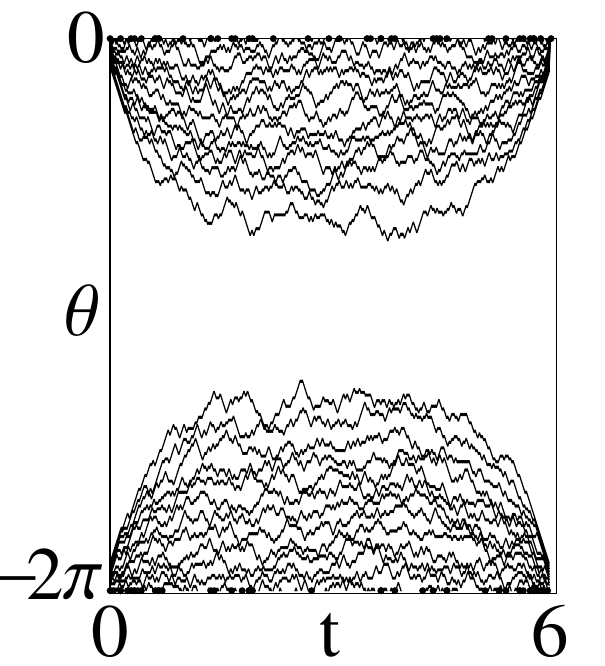}
\includegraphics[height=1.8in]{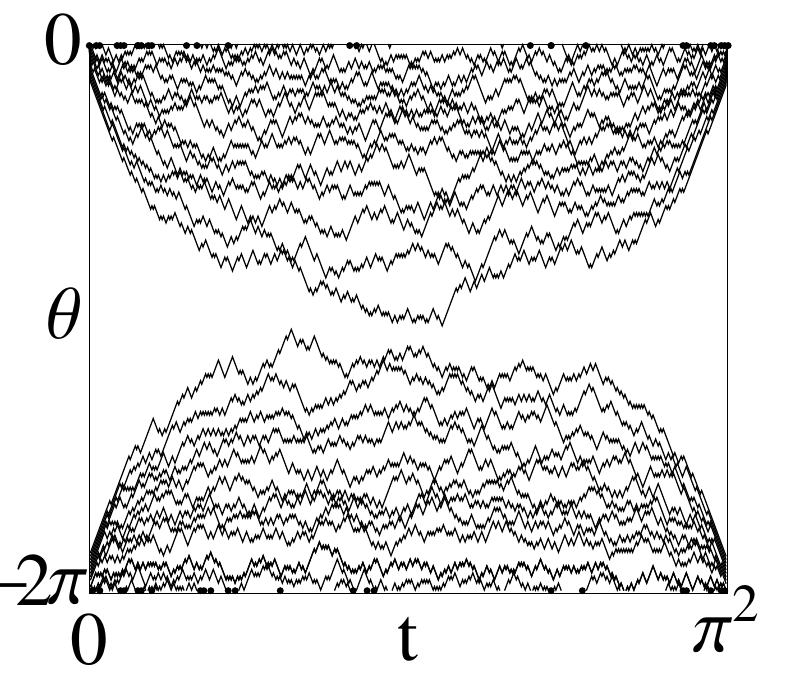}
\includegraphics[height=1.8in]{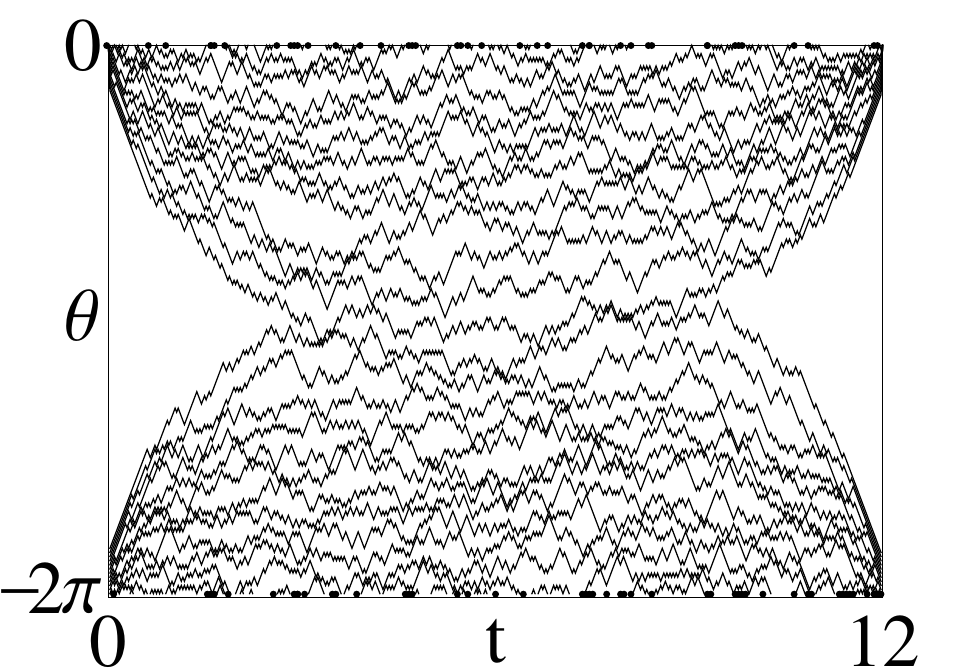}
\caption{Random walk simulations of a Dyson Brownian bridge with 24 walkers 
in subcritical (left), critical (center), and supercritical 
(right) cases.  The tacnode process describes the statistics of the critical 
case.}
\label{no-drift-plots}
\end{center}
\end{figure}

\begin{figure}[h]
\begin{center}
\includegraphics[height=2in]{random-walks-crit2}
\includegraphics[height=2in]{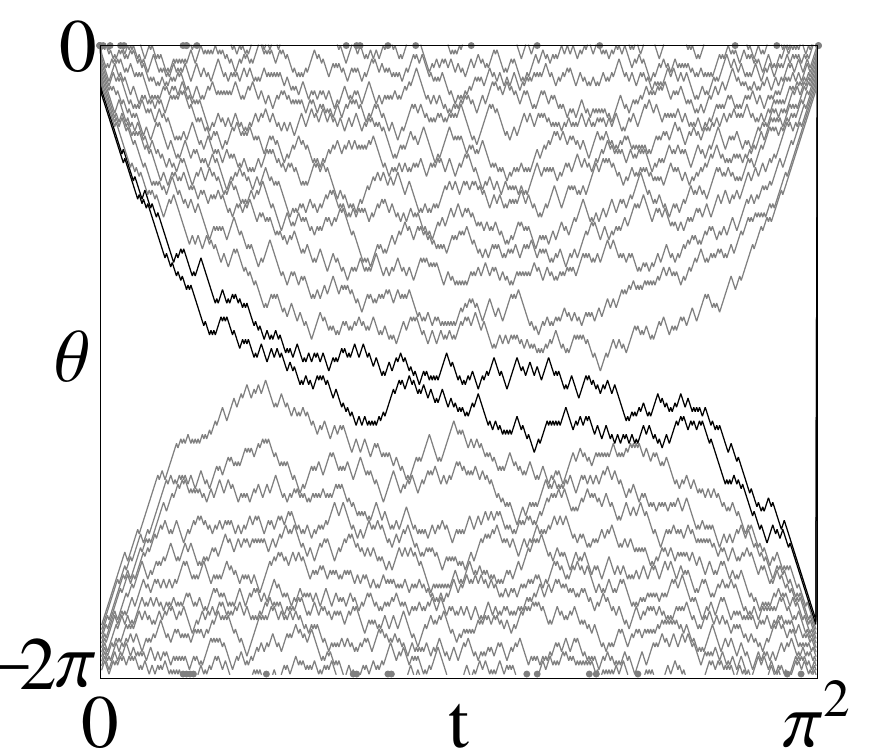}
\caption{Random walk simulations of a Dyson Brownian bridge with 24 walkers in 
the critical case $T=\pi^2$ with total winding 0 (left) and $-2$ (right).}
\label{random-walk-plots}
\end{center}
\end{figure}

More precisely, consider the determinantal process $\text{NIBM}_{0\to T}^\mu$ 
introduced in \cite{BuckinghamL:2017}.  This process describes $n$ 
nonintersecting Brownian bridges on the circle with the addition of a (real) 
drift $\mu$ and is defined as follows.  Given a positive integer 
$n$ and $\tau\in[0,1]$, define the lattice 
\begin{equation}
\label{lattice-def}
  \lattice := \left\{ \frac{m + \tau}{n} \mid m \in \Z \right\}.
\end{equation}
Define $p^{(T,\mu,\tau)}_{n, j}(x)$ to be the monic polynomial of degree $j$ satisfying
\begin{equation} \label{eq:defn_of_discrete_Gaussian_OP}
 \frac{1}{n} \sum_{x \in \lattice} p^{(T,\mu,\tau)}_{n, j}(x) p^{(T,\mu,\tau)}_{n, k}(x)e^{-\frac{Tn}{2}(x^2-2i\mu x)} = h_{n,j}^{(T,\mu,\tau)}\de_{jk},
\end{equation}
where $\{h_{n,j}^{(T,\mu,\tau)}\}_{j=0}^\infty$ are the normalizing constants. These polynomials are defined to be orthogonal with respect to a non-Hermitian weight, and so their existence is not guaranteed.  In this paper we analyze the Riemann--Hilbert problem for these orthogonal polynomials and prove their existence for large enough $n$. Then the existence of the full system of polynomials for small $|\mu|$ follows via a perturbation argument from the $\mu=0$ case, in which the orthogonality is Hermitian. 

Now define the $\tau$-deformed correlation kernel 
\eq
\label{tau-kernel-ext}
K_{t_i, t_j}(\varphi,\theta) := \widetilde{K}_{t_i, t_j}(\varphi,\theta) - {\mathop{W}^\circ}_{[i,j)}(\varphi,\theta),
\endeq
where
\eq
\label{tau-kernel}
\widetilde{K}_{t_i, t_j}(\varphi,\theta):= \frac{n}{2\pi}\sum_{j=0}^{n-1}\frac{1}{h_{n,j}^{(T,\mu,\tau)}}S_{j,T-t}(\varphi)S_{j,t}(-\theta),
\endeq
wherein
\eq
S_{j,a}(\varphi;T,\mu,\tau,n)\equiv S_{j,a}(\varphi):=\frac{1}{n}\sum_{x\in L_{n,\tau}}p_{n,j}^{(T,\mu,\tau)}(x)e^{-an(x^2-2i\mu x)/2}e^{i\varphi nx},
\endeq
and
\begin{equation} \label{eq:W_ij_exact_formula}
{\mathop{W}^\circ}_{[i,j)}(\varphi,\theta) :=\left\{
  \begin{aligned}
&0 , \quad &t_i\ge t_j, \\
&   \frac{1}{2\pi} \sum_{s \in \lattice} e^{-\frac{(t_j - t_i)ns^2}{2} - in(\theta - \varphi)s}, \quad &t_i<t_j.
  \end{aligned}\right.
\end{equation}
Then $\text{NIBM}_{0\to T}^\mu$ is the determinantal process on $\T$ defined by the multi-time extended kernel \eqref{tau-kernel-ext} with the parameter $\tau$ set to $\tau=0$ if $n$ is odd, and $\tau=1/2$ if $n$ is even. That is, if we fix $m$ times $0<t_1\le t_2\le \dots \le t_m<T$, then the $m$-point correlation function for the positions of the particles at times $t_1, \dots, t_m$ is given by 
\begin{equation} \label{eq:defn_Rmelon_special}
  R_{0\to T}^{(n)}(\theta_1, t_1;  \dotsc; \theta_m, t_m ; \mu) = \det \left( K_{t_i, t_j}(\theta_i,\theta_j)\bigg|_{\tau=\hsgn{n}}  \right)_{i, j = 1}^m\,,
\end{equation}
where
\begin{equation}\label{eq:def_hsgn}
  \hsgn{n} :=
  \begin{cases}
    0 & \text{if $n$ is odd}, \\
    \frac{1}{2} & \text{if $n$ is even}.
  \end{cases}
\end{equation}
We note that \eqref{tau-kernel} is not a correlation kernel unless 
$\tau=\epsilon(n)$.  The Fourier-type parameter $\tau$ is used for computing 
winding numbers, as we will see below.  

To analyze the $k$-tacnode process, we consider the process $\text{NIBM}_{0\to T}^\mu$ as the number of particles $n$ approaches infinity when the total time $T$ is close the critical time $T_c=\pi^2$, and the drift $\mu$ is of order $\bigO((\log n)/n)$. More specifically, we define the {\it $k$-tacnode regime} to be the scaling regime such that
\begin{itemize}
\item As $n\to\infty$, $(\pi^2-T)n^{2/3}$ is bounded.
\item For some fixed non-negative integer $k$, $\mu$ satisfies
\eq
\label{mu-range}
\left(k-\frac{1}{2}\right)\frac{\log n}{3\pi n} < \mu \leq \left(k+\frac{1}{2}\right)\frac{\log n}{3\pi n}.
\endeq
\end{itemize}
 In order to state our results, we define a parameter $s$ in terms of $n$ and $T$ as
\eq\label{def-s}
s \equiv s(T, n) := -\left[(3\pi n)\left(z_1 -i\mu- \int_{i\mu}^{z_1} \rho(w)\,dw\right)\right]^{2/3}
\endeq
where 
\eq\label{def-rho}
\rho(w):=\frac{T}{2\pi} \sqrt{\frac{4}{T} - (w-i\mu)^2}, \quad z_1 := i\mu + \frac{2}{T}\sqrt{T-\pi^2}.
\endeq
Here $\rho(w)$ is positive on the interval $(-\frac{2}{\sqrt{T}}+i\mu, \frac{2}{\sqrt{T}}+i\mu)$ and extends analytically into a neighborhood of compact subsets of that interval. One can check that as $T\to \pi^2$,
\eq\label{T-scaling}
s=\frac{2^{2/3}n^{2/3}}{\pi^2}\left((\pi^2 - T)+\frac{4}{5\pi^2}(\pi^2-T)^2+\bigO((\pi^2 - T)^3)\right),
\endeq
so $s$ is bounded as $n\to\infty$ in the $k$-tacnode scaling.
We choose to write \eqref{def-s} in a form that involves $\mu$ because the density $\rho(w)$ on the interval $(-\frac{2}{\sqrt{T}}+i\mu, \frac{2}{\sqrt{T}}+i\mu)$ naturally appears in our analysis, but it is not hard to see that $s$ does not depend on  $\mu$. In fact, $s$ is the same as was defined in \cite[Equation (1.43)]{Liechty:2012}. 
%\eq
%\label{T-scaling}
%T=\pi^2-\frac{2^{-2/3}\pi^2}{n^{2/3}}s,
%\endeq
%where $s\in \realR$ is a constant independent of $n$. 
%We also scale the drift $\mu$ with $n$  by fixing 
%a non-negative integer $k$ and choosing $\mu$ to satisfy
%\eq
%\label{mu-range}
%\left(k-\frac{1}{2}\right)\frac{\log n}{3\pi n} < \mu \leq \left(k+\frac{1}{2}\right)\frac{\log n}{3\pi n}.
%\endeq
%We will show below that in these scalings, the expected total winding number for the particles in $\text{NIBM}_{0\to T}^\mu$ is $k$, and we take a scaling limit of the kernel \eqref{tau-kernel-ext} to obtain a formula for the $k$-tacnode process. Namely we consider \eqref{tau-kernel-ext} with the scalings
%\eq\label{tx-scaling}
% t_{i,j}=\frac{T}{2} + \frac{2^{-10/3}\pi^2}{n^{1/3}}\tau_{i,j}, \quad \theta_{i,j}=-\pi-\frac{2^{-5/3}\pi}{n^{2/3}}\xi_{i,j},
%\endeq
%where  $\tau_{i,j}$, and $\xi_{i,j}$ are real and independent of $n$.

\subsection{Results on winding numbers}
We first review the known results in the tacnode regime in the zero-drift 
case $\mu=0$ \cite{LiechtyW:2016} and then state our results in the 
$k$-tacnode regime.  Let $u_{_\textnormal{HM}}^{(0)}(s)$ be the 
\emph{Hastings--McLeod function}, that is, the unique solution to the 
homogeneous Painlev\'e-II equation 
\eq
\label{PII-hom}
u''(s) = 2u(s)^3 + su(s)
\endeq
with asymptotics
\eq
\label{uhm-gen-plus-inf}
u_{_\textnormal{HM}}^{(0)}(s) = \frac{1}{2\sqrt{\pi}}s^{-1/4}e^{-\frac{2}{3}s^{3/2}}\left(1+\mathcal{O}\left(\frac{1}{s^{3/4}}\right)\right) \text{ as } s\to+\infty.
\endeq
One can also write $u_{_\textnormal{HM}}^{(0)}(s)\sim\text{Ai}(s)$ as 
$s\to+\infty$, where $\text{Ai}(s)$ is the Airy function.  
Let $\mathcal{W}_n(T,\mu)$ be the random variable counting the total winding 
number of particles in the process $\text{NIBM}_{0\to T}^\mu$.  The first 
result from the literature describes the distribution of winding numbers 
in the zero-drift case.
\begin{prop}{\rm(Winding numbers in the zero-drift tacnode regime
\cite[Theorem 1.2(b)]{LiechtyW:2016})}.  Set $\mu=0$.  Suppose $(\pi^2-T)n^{2/3}$ remains bounded as $n\to\infty$, and define $s$ by \eqref{def-s}.  Then, as $n\to\infty$,
\eq
\label{zero-drift-winding}
\Prob(\mathcal{W}_n(T,0) = \om)  = \begin{cases} \vspace{.1in} 1 \displaystyle - \frac{u_{_\textnormal{HM}}^{(0)}(s)}{(2n)^{1/3}} + \mathcal{O}\left(\frac{1}{n^{2/3}}\right), & \omega=0, \\ \vspace{.1in} \displaystyle \frac{1}{2}\frac{u_{_\textnormal{HM}}^{(0)}(s)}{(2n)^{1/3}} + \mathcal{O}\left(\frac{1}{n^{2/3}}\right), & \omega=\pm 1, \\ \displaystyle \mathcal{O}\left(\frac{1}{n^{2/3}}\right), & \text{otherwise}. \end{cases}
\endeq
\end{prop}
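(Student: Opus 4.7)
The plan is to extract the winding number distribution from the determinantal structure by exploiting the Fourier-type parameter $\tau$ in \eqref{lattice-def}--\eqref{tau-kernel-ext}. The winding number $\mathcal{W}_n$ is an integer-valued topological functional of the path configuration, while shifting the lattice $\lattice$ by $\tau/n$ couples to the total winding via an overall phase. Consequently, setting $Z_n(\tau):=\prod_{j=0}^{n-1}h_{n,j}^{(T,0,\tau)}$, the ratio $Z_n(\tau)/Z_n(0)$ plays the role of a characteristic function for $\mathcal{W}_n$, and the probabilities $\Prob(\mathcal{W}_n(T,0)=\om)$ are recovered from an appropriate Fourier inversion in $\tau$ of this ratio. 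The problem thus reduces to producing an asymptotic expansion of $Z_n(\tau)/Z_n(0)$ that is uniform in $\tau\in[0,1]$ throughout the tacnode regime $T\to\pi^2$.

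To obtain such an expansion, I would carry out a Deift--Zhou nonlinear steepest descent analysis of the $2\times 2$ Riemann--Hilbert problem associated to the discrete orthogonal polynomials \eqref{eq:defn_of_discrete_Gaussian_OP} with $\mu=0$, following the Baik--Kriecherbauer--McLaughlin--Miller framework for discrete OPs. The natural equilibrium problem has density $\rho(w)$ from \eqref{def-rho}, supported on an interval that, once lifted to $\T$, just fails to cover the circle when $T<\pi^2$ and just overlaps itself when $T>\pi^2$. At $T=\pi^2$ the two support endpoints collide at the antipodal point $\theta=-\pi$, and the quantity $s$ of \eqref{def-s} is the canonical local rescaling of the deviation from criticality. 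Away from the collision point the standard Airy parametrices apply, while near it one must construct a $2\times 2$ local parametrix from a model Riemann--Hilbert problem whose solution is built from the Hastings--McLeod function $u_{_\textnormal{HM}}^{(0)}(s)$ of \eqref{PII-hom}--\eqref{uhm-gen-plus-inf}. Tracking this parametrix through the chain of steepest descent transformations and applying a differential identity (Toda-type) for $\partial_\tau \log Z_n(\tau)$ yields
\eq
\log\frac{Z_n(\tau)}{Z_n(0)} = \frac{F(\tau,s)}{(2n)^{1/3}} + \bigO\!\left(\frac{1}{n^{2/3}}\right),
\endeq
with $F(\tau,s) = -u_{_\textnormal{HM}}^{(0)}(s)\bigl(1-\cos(2\pi\tau)\bigr)$ at leading order, the cosine reflecting the symmetry $Z_n(\tau)=Z_n(-\tau)$ at $\mu=0$ together with the $\tau\mapsto\tau+1$ periodicity built into \eqref{lattice-def}.

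Performing the Fourier inversion in $\tau$ then gives \eqref{zero-drift-winding}: the constant contribution $-u_{_\textnormal{HM}}^{(0)}(s)/(2n)^{1/3}$ in $F$ subtracts mass from $\om=0$, the $\cos(2\pi\tau)/(2n)^{1/3}$ contribution puts equal mass $u_{_\textnormal{HM}}^{(0)}(s)/(2(2n)^{1/3})$ on $\om=\pm 1$, and any mass at $|\om|\geq 2$ can only appear through the expansion of $\exp(F/(2n)^{1/3})$ beyond linear order and is thus of size $\bigO(n^{-2/3})$. The main obstacle is the construction of the Painlevé-II local parametrix and its matching to the outer parametrix: one must verify that the model Riemann--Hilbert problem is uniformly solvable for $s$ in a compact set, establish the matching on the overlap contour to accuracy $\bigO(n^{-2/3})$, and then control these small-norm estimates uniformly in $\tau$ so that the Fourier inversion preserves the stated error. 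Adapting the RHP to the discrete lattice $\lattice$ (rather than the continuum) requires additional care with the triangularity structure near the real axis, but does not affect the leading Painlevé contribution.
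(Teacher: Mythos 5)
Your strategy is the same one the paper (and the cited reference \cite{LiechtyW:2016}, where this proposition is actually proven) uses: the winding probabilities are the Fourier coefficients in $\tau$ of the Hankel-determinant ratio \eqref{eq:total_offset_formula}; that ratio is computed by nonlinear steepest descent for the discrete orthogonal polynomial Riemann--Hilbert problem, with Airy parametrices at the band endpoints and a Hastings--McLeod local parametrix at the saturation point; and the ``Toda-type differential identity'' you invoke is exactly \eqref{log-Hn-integral}, which routes everything through the subleading coefficient $c^{(T,\mu,\tau)}_{n,n,n-1}=[\mathbf P_{n,1}]_{11}$. Your leading-order formula $F(\tau,s)=-u_{_\textnormal{HM}}^{(0)}(s)\bigl(1-\cos 2\pi\tau\bigr)$ agrees with what Lemma \ref{lemma-cnnnm1} yields at $k=0$, $\mu=0$ --- but only for odd $n$. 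For even $n$ the process lives on the half-integer lattice, so the correct reference normalization is $\Hankel_n(T,0,\tfrac12)$ rather than $\Hankel_n(T,0,0)$, the inversion formula carries the phase $e^{2\pi i\om\hsgn{n}}=(-1)^\om$, and the cosine term flips sign (this is the $(-1)^n$ appearing in \eqref{RV-RU-def}); taken literally, your normalization at $\tau=0$ together with a naive Fourier inversion would assign negative mass to $\om=\pm 1$ when $n$ is even. Once that parity bookkeeping is corrected the two parities produce identical probabilities and your argument matches the paper's.
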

Our results for the winding numbers in the $k$-tacnode regime are expressed 
in terms of certain functions $\mathcal{U}_k$ and $\mathcal{V}_k$ that solve 
a coupled Painlev\'e-II system and whose logarithmic derivatives are 
\emph{generalized Hastings--McLeod functions}, which we now define.  
For any $\alpha>-\frac{1}{2}$, the inhomogeneous Painlev\'e-II equation
\eq
\label{PII}
u''(s) = 2u(s)^3 + su(s) -\alpha
\endeq
has a unique solution \cite{ItsK:2003,FokasIKN:2006,ClaeysKV:2008}, denoted 
$u_{_\textnormal{HM}}^{(\alpha)}$ and called the generalized Hastings--McLeod 
solution, satisfying both
\eq
\label{uhm-minus-inf}
u_{_\textnormal{HM}}^{(\alpha)}(s) = \sqrt{\frac{-s}{2}}\left(1+\mathcal{O}\left(\frac{1}{(-s)^{3/2}}\right)\right) \text{ as } s\to-\infty,
\endeq
and
\eq
\label{uhm-plus-inf}
u_{_\textnormal{HM}}^{(\alpha)}(s) = \frac{\alpha}{s}\left(1+\mathcal{O}\left(\frac{1}{s^3}\right)\right) \text{ as } s\to+\infty \quad (\alpha\neq 0).
\endeq
For $\alpha=0$ the asymptotic condition \eqref{uhm-gen-plus-inf} is used 
instead of \eqref{uhm-plus-inf}, giving the standard Hastings--McLeod 
function $u_{_\textnormal{HM}}^{(0)}$.

Now let $\mathcal{U}_k$ and $\mathcal{V}_k$ be the solutions of the 
coupled Painlev\'e-II system
\eq
\label{coupled-PII}
\begin{split}
\mathcal{U}_k''(s) & = 2\mathcal{U}_k(s)^2\mathcal{V}_k(s) + s\mathcal{U}_k(s), \\
\mathcal{V}_k''(s) & = 2\mathcal{U}_k(s)\mathcal{V}_k(s)^2 + s\mathcal{V}_k(s), \\
\end{split}
\endeq
with asymptotic behavior 
\eq
\label{Uk-Vk}
\begin{split}
\mathcal{U}_k(s) & = \begin{cases} \displaystyle \frac{-ik!}{2\cdot 8^k\sqrt{\pi}s^{(2k+1)/4}}e^{-\frac{2}{3}s^{3/2}}\left(1+\mathcal{O}\left(\frac{1}{s^{3/4}}\right)\right), & s\to+\infty, \\ \displaystyle -\frac{i(-s)^{(2k+1)/2}}{8^k\sqrt{2}}\left(1+\mathcal{O}\left(\frac{1}{(-s)^{3/2}}\right)\right), & s\to-\infty, \end{cases} \\
\mathcal{V}_k(s) & = \begin{cases} \displaystyle \frac{8^k\sqrt{\pi}is^{(2k-1)/4}}{(k-1)!}e^{\frac{2}{3}s^{3/2}}\left(1+\mathcal{O}\left(\frac{1}{s^{3/4}}\right)\right), & s\to+\infty, \\ \displaystyle \frac{2^{3k-1}\sqrt{2}i}{(-s)^{(2k-1)/2}}\left(1+\mathcal{O}\left(\frac{1}{(-s)^{3/2}}\right)\right), & s\to-\infty.
\end{cases}
\end{split}
\endeq
When $k=0$, these functions are simply multiples of the Hastings--McLeod 
function for $\alpha=0$:
\eq
\label{U0-V0-ito-HM}
\mathcal{U}_0(s) = -iu_{_\textnormal{HM}}^{(0)}(s), \quad \mathcal{V}_0(s) = iu_{_\textnormal{HM}}^{(0)}(s).
\endeq
For $k>0$, if we define 
\eq
\label{log-derivatives}
p_k(s):=\frac{\mathcal{U}_k'(s)}{\mathcal{U}_k(s)}, \quad q_k(s):=\frac{\mathcal{V}_k'(s)}{\mathcal{V}_k(s)},
\endeq
and
\eq
\label{scaled-P-Q}
P_k(s):=2^{-1/3}p_k(-2^{-1/3}s), \quad Q_k(x):=2^{-1/3}q_k(-2^{-1/3}s),
\endeq
then
\eq
\label{Pk-Qk-ito-hm}
P_k(x) \equiv -u_{_\textnormal{HM}}^{(k+\frac{1}{2})}(x), \quad   Q_k(x) \equiv u_{_\textnormal{HM}}^{(k-\frac{1}{2})}(x).
\endeq
Proofs of Equations \eqref{U0-V0-ito-HM} and \eqref{Pk-Qk-ito-hm} are given in \S\ref{Id-Uk-Vk}.

\begin{theo}[Winding numbers in the $k$-tacnode regime]
\label{thm-tacnode-winding}
Fix a non-negative integer $k$ and choose $\mu$ according to 
\eqref{mu-range}.  Also suppose $(\pi^2-T)n^{2/3}$ remains bounded as 
$n\to\infty$, and define $s$ by \eqref{def-s}.  Then 
\eq
\label{tacnode-winding}
\Prob(\mathcal{W}_n(T,\mu) = \om) = \begin{cases} 
\displaystyle \frac{F_\mathcal{V}}{1+F_\mathcal{U}+F_\mathcal{V}} + \mathcal{O}\left(\frac{1}{n^{2/3}}\right), & \omega=k-1, \\
\displaystyle \frac{1}{1 + F_\mathcal{U} + F_\mathcal{V}} + \mathcal{O}\left(\frac{1}{n^{2/3}}\right), & \omega=k, \\ 
\displaystyle \frac{F_\mathcal{U}}{1 + F_\mathcal{U} + F_\mathcal{V}} + \mathcal{O}\left(\frac{1}{n^{2/3}}\right), & \omega=k+1, \vspace{.1in} \\ 
\displaystyle \mathcal{O}\left(\frac{1}{n^{2/3}}\right), & \text{otherwise}, \end{cases}
\endeq
where
\eq
\label{FU-FV-def}
\begin{split}
F_\mathcal{U}\equiv F_\mathcal{U}(s,\mu,n) & := \frac{ie^{2n\pi\mu}}{2(2n)^{(2k+1)/3}}\mathcal{U}_k(s), \\
F_\mathcal{V}\equiv F_\mathcal{V}(s,\mu,n) & := \frac{-i(2n)^{(2k-1)/3}}{2e^{2n\pi\mu}}\mathcal{V}_k(s).
\end{split}
\endeq
Here $F_\mathcal{U}$ and $F_\mathcal{V}$ are non-negative real quantities.
\end{theo}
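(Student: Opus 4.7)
The plan is to compute the distribution of $\mathcal{W}_n(T,\mu)$ by Fourier inversion from its characteristic function. Since $\mathcal{W}_n$ is integer-valued,
\eq
\Prob\bigl(\mathcal{W}_n=\omega\bigr) = e^{2\pi i\omega\hsgn{n}}\int_0^1 \frac{Z_n(T,\mu,\tau)}{Z_n(T,\mu,\hsgn{n})}\,e^{-2\pi i\omega\tau}\,d\tau, \qquad Z_n(T,\mu,\tau):=\prod_{j=0}^{n-1} h^{(T,\mu,\tau)}_{n,j}.
\endeq
This identity, established in the framework of \cite{BuckinghamL:2017,LiechtyW:2016}, reflects the fact that the Fourier-type parameter $\tau$ built into the lattice \eqref{lattice-def} and the weight \eqref{eq:defn_of_discrete_Gaussian_OP} twists configurations by a phase conjugate to winding number. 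The theorem will therefore follow from the uniform-in-$\tau$ asymptotic
\eq\label{Z-ratio-asymp}
\frac{Z_n(T,\mu,\tau)}{Z_n(T,\mu,\hsgn{n})} = \frac{e^{2\pi ik(\tau-\hsgn{n})}\bigl(1 + F_\mathcal{U}\,e^{2\pi i(\tau-\hsgn{n})} + F_\mathcal{V}\,e^{-2\pi i(\tau-\hsgn{n})}\bigr)}{1+F_\mathcal{U}+F_\mathcal{V}} + \bigO\bigl(n^{-2/3}\bigr),
\endeq
since term-by-term Fourier inversion of \eqref{Z-ratio-asymp} produces exactly the three non-negligible masses at $\omega\in\{k-1,k,k+1\}$ with the weights in \eqref{tacnode-winding}, and the $\bigO(n^{-2/3})$ error, uniform in $\tau$, controls all other probabilities.

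The bulk of the work is the asymptotic analysis of $Z_n(T,\mu,\tau)$. I would carry out a Deift--Zhou steepest-descent analysis of the Riemann--Hilbert problem for the discrete orthogonal polynomials $p^{(T,\mu,\tau)}_{n,j}$ on the shifted lattice $\lattice$. The equilibrium measure is the semicircle density $\rho$ from \eqref{def-rho} supported on the translated band $(-2/\sqrt T + i\mu,\,2/\sqrt T + i\mu)$; when wrapped around $\T$, its two endpoints collide at $\theta=-\pi$ precisely at the critical time $T=\pi^2$, producing the tacnode-type merger. After the $g$-function transformation, opening of lenses, and construction of the outer parametrix, the essential new step -- and the main obstacle -- is the construction of the local parametrix at the colliding pair of endpoints. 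In the zero-drift case \cite{LiechtyW:2016} this parametrix is built from the homogeneous Painlev\'e-II $\Psi$-function, yielding the standard Hastings--McLeod function $u^{(0)}_{_\textnormal{HM}}$. For $\mu$ in the shifted range \eqref{mu-range}, however, the jumps acquire $k$-dependent monodromy and the parametrix must instead be built from a Lax-pair solution of the coupled Painlev\'e-II system \eqref{coupled-PII}. The functions $\mathcal{U}_k,\mathcal{V}_k$ appear as off-diagonal entries of this parametrix, with the asymptotic normalizations \eqref{Uk-Vk} dictated by matching to the outer parametrix and by the exponential factors $e^{\pm 2n\pi\mu}$ coming from the $g$-function at the shifted band endpoints; the identifications \eqref{U0-V0-ito-HM}--\eqref{Pk-Qk-ito-hm} with generalized Hastings--McLeod functions then follow from standard Flaschka--Newell identities and are verified in \S\ref{Id-Uk-Vk}.

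Once the local parametrix is in hand, I would extract $Z_n(T,\mu,\tau)$ either by integrating a $\tau$-differential identity from the reference value $\hsgn{n}$, or directly via the $(1,2)$-entry asymptotics of the recovered polynomial matrix. The explicit $\tau$-dependence of the local parametrix produces the trigonometric combination $1+F_\mathcal{U}\,e^{2\pi i(\tau-\hsgn{n})}+F_\mathcal{V}\,e^{-2\pi i(\tau-\hsgn{n})}$, while the prefactor $e^{2\pi ik(\tau-\hsgn{n})}$ records that the drift $\mu\in\bigl((k-\tfrac12)\log n/(3\pi n),\,(k+\tfrac12)\log n/(3\pi n)\bigr]$ effectively shifts the equilibrium band by $k$ lattice sites around $\T$, making $k$ rather than $0$ the \emph{neutral} winding. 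The scaling factors $e^{\pm 2n\pi\mu}$ in \eqref{FU-FV-def} are precisely what keep $F_\mathcal{U}$ and $F_\mathcal{V}$ bounded over the prescribed range of $\mu$, while the sign conventions combined with the asymptotics \eqref{Uk-Vk} of $\mathcal{U}_k,\mathcal{V}_k$ force them to be non-negative real quantities. Normalizing at $\tau=\hsgn{n}$ supplies the denominator $1+F_\mathcal{U}+F_\mathcal{V}$, and Fourier inversion of \eqref{Z-ratio-asymp} completes the proof. The hardest step is unquestionably the rigorous construction and uniform-in-$\tau$ error control of the coupled Painlev\'e-II local parametrix; once that is in place, the remainder of the argument reduces to bookkeeping and Fourier inversion.
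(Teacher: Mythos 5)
Your proposal follows essentially the same route as the paper: Fourier inversion of the ratio of normalizing constants (equivalently, of Hankel determinants), with that ratio obtained by integrating the $\tau$-differential identity for $\log\Hankel_n$ from the reference value $\hsgn{n}$ and feeding in Riemann--Hilbert asymptotics whose genuinely new ingredient is the coupled Painlev\'e-II local parametrix at the tacnode point $z=i\mu$; your intermediate asymptotic for the ratio is exactly the formula the paper derives before its final Fourier inversion. The one technical piece you do not flag is the paper's additional ``parametrix for the error'': when $\mu$ is near an endpoint of the interval \eqref{mu-range} the mismatch between the local and outer parametrices on $\partial\mathbb{D}_{i\mu}$ is $\mathcal{O}(1)$ rather than $o(1)$, so a second, explicitly solvable correction must be built before the small-norm machinery yields the uniform $\mathcal{O}(n^{-2/3})$ error.
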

Theorem \ref{thm-tacnode-winding} is proven in 
\S\ref{subsec-winding-thm-proof} (assuming Lemma \ref{lemma-cnnnm1}, which is 
proven in \S\ref{subsec-lemma-proofs}).
Note that when $\mu=0$ (implying $k=0$), \eqref{tacnode-winding} reduces to \eqref{zero-drift-winding}
using \eqref{U0-V0-ito-HM}. When $\mu$ is in the interior of the interval \eqref{mu-range}, both $F_\mathcal{U}$ and $F_\mathcal{V}$ are $o(1)$. These terms are smallest when $\mu = \frac{k\log n}{3\pi n}$, i.e, $\mu$ is in the center of the interval, at which point they are $\bigO(n^{-1/3})$. They are largest when $\mu$ is at the endpoints of the interval, at which point they are $\bigO(1)$. 

The generalized Hastings--McLeod functions $u_{_\textnormal{HM}}^{(\alpha)}(s)$
have previously appeared in at least three different random matrix and 
nonintersecting particle 
problems.  Claeys, Kuijlaars, and Vanlessen \cite{ClaeysKV:2008} showed these 
functions arise in the double-scaling limit of the eigenvalue correlation 
kernel near the origin for $n\times n$ unitary random matrix ensembles of the 
form 
$$Z_{n,N}^{-1}|\det M|^{2\alpha}e^{-N\text{Tr} V(M)}dM, \quad \alpha>-\frac{1}{2}.$$
Here $V(x)$ is the potential and $Z_{n,N}$ is the partition function.  They study an associated $2\times 2$ 
Riemann--Hilbert problem with jumps on four rays and a pole of order $\alpha$ 
at the origin.  For $\alpha=0$, this Riemann--Hilbert problem agrees with 
Riemann--Hilbert Problem \ref{rhp-jm} below with $k=0$, a fact we will use to 
identify the functions $\mathcal{U}_k$ and $\mathcal{V}_k$.  
A different random matrix setting in which 
$u_{_\textnormal{HM}}^{(\alpha)}(s)$ appears is the chiral two-matrix model 
for $n\times(n+\alpha)$ rectangular matrices $\Phi_1$ and $\Phi_2$ with 
distribution 
$$Z_n^{-1}\exp(-n\text{Tr}(V(\Phi_1^*\Phi_1)+W(\Phi_2^*\Phi_2)-\tau(\Phi_1^*\Phi_2+\Phi_2^*\Phi_1)))d\Phi_1 d\Phi_2, \quad \tau\in\mathbb{R}.$$
Here $V$ and $W$ are polynomial potentials, $Z_n$ is the partition 
function, and $\tau$ is a coupling constant.  In the case where $V$ is linear 
and $W$ is quadratic, Delvaux, Geudens, and Zhang \cite{DelvauxGZ:2013} 
analyzed the problem using a $4\times 4$ Riemann--Hilbert problem with jumps 
on ten rays and a pole of order $\alpha$ at the origin whose solution can be 
expressed in terms of $u_{_\textnormal{HM}}^{(\alpha)}(s)$.  This same
Riemann--Hilbert problem was previously derived by Delvaux 
\cite{Delvaux:2013b} in the study of the so-called hard-edge tacnode.  Here 
$n$ nonintersecting squared Bessel paths start and end at the same point, 
chosen so the paths just osculate against the hard edge.  The squared Bessel 
process is a $\realR_+$-valued stochastic process with a transition probability density defined in terms of Bessel functions. It depends on a parameter $\alpha>-1$, which is the parameter of the Bessel function (in the special case $\alpha=\frac{d}{2} - 1$ for a positive integer $d$, the squared Bessel process behaves like the square of the distance to the origin of a $d$-dimensional Brownian motion). When a group of nonintersecting squared Bessel paths is tuned to approach and then separate from the hard edge at 0, the hard-edge tacnode process appears and is given in terms of the generalized Hastings--McLeod 
functions $u_{_\textnormal{HM}}^{(\alpha+\frac{1}{2})}(s)$.  For certain Bessel parameters, the 
hard-edge tacnode kernel was related to the even and odd parts of the 
(standard) tacnode kernel by Liechty and Wang \cite{LiechtyW:2017}.  Note 
that all of these processes are different from the $k$-tacnode process we 
study here, as they involve the generalized Hastings--McLeod functions 
directly as opposed to the tau functions $\mathcal{U}_k(s)$ and 
$\mathcal{V}_k(s)$.  To the best of our knowledge, this is the first time the 
tau functions for the generalized Hastings--McLeod functions have appeared in 
the literature.  Interestingly, it is the tau functions for the rational 
solutions to the inhomogeneous Painlev\'e-II equation that arise in the 
analysis of a librational-rotational transition for the semiclassical 
sine-Gordon equation \cite{BuckinghamM:2012}.  

\subsection{Results on the correlation kernel}
Next, we define the zero-drift tacnode kernel and describe results from the 
literature on the local convergence of the kernel \eqref{tau-kernel} with 
$\mu=0$ before giving our results for nonzero drift.  Let 
${\bf \Psi}(\zeta;s)$ be the $2\times 2$ matrix-valued function satisfying 
the differential equation 
\eq
\label{zero-drift-lax-eqn}
\frac{d}{d\zeta}{\bf \Psi}(\zeta;s) = \bbm -4i\zeta^2-i(s+2u_{_\textnormal{HM}}^{(0)}(s)^2) & 4\zeta u_{_\textnormal{HM}}^{(0)}(s)+2iu_{_\textnormal{HM}}^{(0)\,\prime}(s) \\ 4\zeta u_{_\textnormal{HM}}^{(0)}(s)-2iu_{_\textnormal{HM}}^{(0)\,\prime}(s) & 4i\zeta^2 + i(s+2u_{_\textnormal{HM}}^{(0)}(s)^2) \ebm {\bf \Psi}(\zeta;s)
\endeq
and the asymptotic condition 
\eq
\label{zero-drift-psi-asymptotics}
{\bf\Psi}(\zeta;s)e^{i(\frac{4}{3}\zeta^3+s\zeta)\sigma_3} = \mathbb{I} + \mathcal{O}(\zeta^{-1}), \quad \zeta\to\pm\infty
\endeq
(here the Pauli matrix $\sigma_3$ is defined in \eqref{pauli-matrices}).  
Equation \eqref{zero-drift-lax-eqn} is part of the Flaschka--Newell Lax pair 
for the homogeneous Painlev\'e-II equation (specialized to the 
Hastings--McLeod solution).  Then define 
\eq
f_0(u;s):=\begin{cases} -[{\bf\Psi}(u;s)]_{12}, & \Im u>0, \\ [{\bf\Psi}(u;s)]_{11}, & \Im u <0,\end{cases} 
\qquad 
g_0(u;s):=\begin{cases} -[{\bf\Psi}(u;s)]_{22}, & \Im u>0, \\ [{\bf\Psi}(u;s)]_{21}, & \Im u <0,\end{cases}
\endeq
and
\eq\label{kernel-non-ess}
\phi_{\tau_i,\tau_j}(\xi,\eta):=\begin{cases} 0, & \tau_i\ge \tau_j, \\ \displaystyle \frac{1}{\sqrt{2\pi(\tau_j-\tau_i)}}e^{-(\xi-\eta)^2/(2(\tau_j-\tau_i))}, & \tau_i<\tau_j. \end{cases}
\endeq
Also define $\Sigma_T$ to be the oriented contour consisting of two 
unbounded components, the first composed of three straight segments from 
$e^{i\pi/6}\cdot\infty$ to $\sqrt{3}+i$ to $-\sqrt{3}+i$ to 
$e^{5\pi i/6}\cdot\infty$, and the second composed of three straight 
segments from $e^{-5\pi i/6}\cdot\infty$ to $-\sqrt{3}-i$ to $\sqrt{3}-i$ to 
$e^{-i\pi/6}\cdot\infty$.  Then set
\eq
\widetilde{K}_{\tau_i,\tau_j}^{(0)}(\xi,\eta;s):=\frac{1}{2\pi}\oint_{\Sigma_T}du\oint_{\Sigma_T}dv\,e^{\tau_iu^2/2-\tau_jv^2/2}e^{-i(u\xi-v\eta)}\frac{f_0(u;s)g_0(v;s)-g_0(u;s)f_0(v;s)}{2\pi i(u-v)}
\endeq
and define the (zero-drift) tacnode kernel as 
\eq
K_{\tau_i,\tau_j}^{(0)}(\xi,\eta;s) := \widetilde{K}_{\tau_i,\tau_j}^{(0)}(\xi,\eta;s) - \phi_{\tau_i,\tau_j}(\xi,\eta).
\endeq
The following result from the literature states that the tacnode kernel is the 
limiting behavior of the kernel \eqref{tau-kernel} in the tacnode regime.
\begin{prop}{\rm(The correlation kernel in the zero-drift tacnode regime
\cite[Theorem 1.3(b)]{LiechtyW:2016})}.  Set $\mu=0$.  Suppose $(\pi^2-T)n^{2/3}$ remains bounded as $n\to\infty$, and define $s$ by \eqref{def-s}.  Scale 
\eq\label{tacnode-scaling}
t_i=\frac{T}{2} + \frac{2^{-10/3}\pi^2}{n^{1/3}}\tau_i, \quad t_j=\frac{T}{2} + \frac{2^{-10/3}\pi^2}{n^{1/3}}\tau_j, \quad \varphi=-\pi-\frac{2^{-5/3}\pi}{n^{2/3}}\xi, \quad \theta=-\pi-\frac{2^{-5/3}\pi}{n^{2/3}}\eta.
\endeq
Then
\eq
\label{zero-drift-kernel}
\lim_{n\to\infty}K_{t_i,t_j}(\varphi,\theta)\left|\frac{dy}{d\eta}\right| = K_{\tau_i,\tau_j}^{(0)}(\xi,\eta;s).
\endeq
\end{prop}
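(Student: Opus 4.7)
The plan is to perform an asymptotic analysis of the discrete orthogonal polynomials $p_{n,j}^{(T,0,\tau)}$ via the nonlinear steepest descent method of Deift--Zhou, and to extract the kernel asymptotics from the resulting RHP solution. First I would pose the standard $2\times 2$ Fokas--Its--Kitaev RHP for discrete orthogonal polynomials on the lattice $\lattice$, converting the discrete orthogonality to a continuous RHP whose jump contour encircles the real line by the usual residue trick (as in the Baik--Kriecherbauer--McLaughlin--Miller framework). The Hermitian structure of the weight when $\mu=0$ is essential here to guarantee existence of the polynomials and of the RHP solution.

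Next I would introduce a $g$-function built from the equilibrium measure with density $\rho(w)$ from \eqref{def-rho}. For $T<\pi^2$ this measure is supported on the subinterval $(-2/\sqrt{T},\, 2/\sqrt{T}) \subset (-\pi,\pi)$, and the complement of its support on $\T$ is a void band. Performing the usual sequence of transformations (subtracting the $g$-function, opening lenses) produces an RHP whose jumps are exponentially close to the identity away from the band endpoints $\pm 2/\sqrt{T}$. On $\T$ these endpoints collide at $\theta=-\pi$ precisely when $T=\pi^2$, which is the mechanism behind the tacnode.

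The heart of the argument is the construction of parametrices. Outside a small disc around $\theta=-\pi$, the outer parametrix is built from the equilibrium measure in the standard fashion. Because two endpoints are merging, the usual Airy local parametrix is inadequate; instead one must construct a $2\times 2$ local parametrix whose building block is the function ${\bf \Psi}(\zeta;s)$ determined by the Flaschka--Newell Lax equation \eqref{zero-drift-lax-eqn} together with the asymptotic normalization \eqref{zero-drift-psi-asymptotics}. The parameter $s$ in \eqref{def-s} is precisely the rescaled local coordinate that makes ${\bf \Psi}(\zeta;s)$ match the outer parametrix to leading order on the boundary of the disc; verifying this conformal matching, together with the agreement of the model jumps with those of the transformed RHP, is the chief technical hurdle. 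The expansion \eqref{T-scaling} identifies $s$ as the natural critical parameter, and the boundedness of $(\pi^2-T)n^{2/3}$ ensures $s$ stays in a compact range as $n\to\infty$.

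Once all parametrices are assembled, a standard small-norm argument bounds the error RHP by $\mathcal{O}(n^{-1/3})$. To extract the kernel, I would rewrite $\widetilde{K}_{t_i,t_j}$ using a Christoffel--Darboux-type identity so that it is expressed directly in terms of the RHP solution. Under the double scaling \eqref{tacnode-scaling}, the defining contour integrals of $S_{j,T-t}(\varphi)$ and $S_{j,t}(-\theta)$ localize inside the disc around $\theta=-\pi$, where the relevant entries of ${\bf\Psi}(\zeta;s)$ combine to produce the integrand $f_0(u;s)g_0(v;s)-g_0(u;s)f_0(v;s)$ after deforming the contours of integration onto $\Sigma_T$. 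Finally, Poisson summation on $\lattice$ applied to ${\mathop{W}^\circ}_{[i,j)}$ replaces the lattice sum by the continuous Gaussian $\phi_{\tau_i,\tau_j}$ from \eqref{kernel-non-ess}, yielding \eqref{zero-drift-kernel}.
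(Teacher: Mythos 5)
This proposition is not proven in the paper at all: it is quoted verbatim from \cite[Theorem 1.3(b)]{LiechtyW:2016}, and the paper's own contribution is the $k$-tacnode generalization (Theorem \ref{thm:k-tacnode-kernel}), proven in \S\ref{subsec-kernel-proof} from Lemmas \ref{lemma:asymptotics_of_OPs_critical} and \ref{lemma:asymptotics_of_OPs_origin}. That said, your outline is essentially the strategy of the cited reference and of the paper's $k=0$, $\mu=0$ specialization: the meromorphic Fokas--Its--Kitaev problem (Riemann--Hilbert Problem \ref{rhp-dop}), interpolation of poles, the $g$-function of \eqref{eq:LWg-definition}, lens opening, a Painlev\'e-II local parametrix normalized by the Flaschka--Newell data \eqref{zero-drift-lax-eqn}--\eqref{zero-drift-psi-asymptotics}, an $\mathcal{O}(n^{-1/3})$ small-norm estimate, and the double-contour Christoffel--Darboux representation \eqref{tac8}--\eqref{J-def} that localizes under the scaling \eqref{tacnode-scaling}; the $W^\circ$ term is handled exactly as you say.

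The one point where your description would lead you astray in execution is the location and nature of the critical parametrix. In the $z$-plane RHP the band endpoints $a=-2/\sqrt{T}$ and $b=2/\sqrt{T}$ do \emph{not} merge as $T\to\pi^2$; they stay a distance $4/\sqrt{T}\approx 4/\pi$ apart and receive ordinary Airy parametrices (Riemann--Hilbert Problem \ref{rhp-airy}), matched to $\mathcal{O}(n^{-1})$. The criticality is a \emph{saturation} transition at the interior point $z=0$ (more generally $z=i\mu$): the density $\rho(0)=\sqrt{T}/\pi$ reaches the lattice constraint $1$ exactly at $T=\pi^2$, which is why the exponent $n(G(z)+2\pi iz)$ in \eqref{nG-expansion} degenerates there and forces the cubic Chester--Friedman--Ursell change of variables \eqref{zeta-conf-map} and the Painlev\'e-II parametrix in $\mathbb{D}_{i\mu}$. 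The collision of band edges ``at $\theta=-\pi$'' is the picture for the particle density on the circle, and it translates into the $z$-plane as this midpoint saturation, not as a merging of the endpoints of the equilibrium measure; if you attempted to build the $\mathbf\Psi$-parametrix at merging endpoints you would find no such degeneration to model. With that correction, the rest of your argument goes through as in the reference.
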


We now present our results for the correlation kernel in the $k$-tacnode 
regime. 
 Let $\widetilde{\bf L}_k(\zeta;s)$ be the $2\times 2$ matrix-valued function satisfying the differential equation 
\eq
\frac{\partial}{\partial\zeta}\widetilde{\bf L}_k(\zeta;s)  = \bbm -4i\zeta^2 - i(s+2\mathcal{U}_k(s)\mathcal{V}_k(s)) & 4i\zeta\mathcal{U}_k(s)-2\mathcal{U}_k'(s) \\ -4i\zeta\mathcal{V}_k(s) - 2\mathcal{V}_k'(s) & 4i\zeta^2 + i(s+2\mathcal{U}_k(s)\mathcal{V}_k(s)) \ebm \widetilde{\bf L}_k(\zeta;s),
\endeq
where $\mathcal{U}_k(s)$ and $\mathcal{V}_k(s)$ are defined in \eqref{coupled-PII} and \eqref{Uk-Vk}, with the boundary condition
\eq
\widetilde{\bf L}_k(\zeta;s)\zeta^{-k\sigma_3}e^{i(\frac{4}{3}\zeta^3+s\zeta)\sg_3} = \mathbb{I} + \mathcal{O}\left(\frac{1}{\zeta}\right) \qquad \textrm{as} \ \zeta\to\pm\infty;
\endeq
compare to \eqref{zero-drift-lax-eqn} and 
\eqref{zero-drift-psi-asymptotics}.  
%
%with asymptotic behavior
%\eq
%\label{Z-expansion-short}
%{\bf Z}_k(\zeta;s)\zeta^{-k\sigma_3} = \mathbb{I} + \mathcal{O}\left(\frac{1}{\zeta}\right)
%\endeq
%such that 
%\eq
%\label{L-def}
%{\bf L}_k(\zeta;s):={\bf Z}_k(\zeta;s)e^{-i(\frac{4}{3}\zeta^3+s\zeta)\sigma_3}
%\endeq
%satisfies 
%\eq
%\label{JM-Lax-s}
%\frac{\partial{\bf L}_k}{\partial s} = \left( -i\sigma_3\zeta + i\bbm 0 & \mathcal{U}_k(s) \\ -\mathcal{V}_k(s) & 0 \ebm \right) {\bf L}_k.
%\endeq
Then define the functions 
\eq
\label{fk-gk-def}
f_k(u;s):=\begin{cases} -[\widetilde{\bf L}_k(u;s)]_{12}, & \Im u>0, \\ [\widetilde{\bf L}_k(u;s)]_{11}, & \Im u <0,\end{cases} 
\qquad 
g_k(u;s):=\begin{cases} -[\widetilde{\bf L}_k(u;s)]_{22}, & \Im u>0, \\ [\widetilde{\bf L}_k(u;s)]_{21}, & \Im u <0.\end{cases}
\endeq
Set
\eq
\label{kernel-Kktilde}
\widetilde{K}_{\tau_i,\tau_j}^{(k)}(\xi,\eta;s) :=
\frac{1}{2\pi} \int_{\Sg_T} du \int_{\Sg_T } dv\, e^{(\frac{\tau_i}{2} u^2 - \frac{\tau_j}{2} v^2) - i(\xi u - \eta v)}\frac{f_k(u; s)g_k(v;s) - g_k(u; s)f_k(v; s)}{2\pi i (u - v)}
\endeq
and define the $k$-tacnode kernel to be
\eq
K_{\tau_i,\tau_j}^{(k)}(\xi,\eta;s) := \widetilde{K}_{\tau_i,\tau_j}^{(k)}(\xi,\eta;s) - \phi_{\tau_i,\tau_j}(\xi,\eta),
\endeq
where $\phi_{\tau_i,\tau_j}(\xi,\eta)$ is defined in \eqref{kernel-non-ess}.
\begin{theo}[The correlation kernel in the $k$-tacnode regime]
\label{thm:k-tacnode-kernel}
Fix a non-negative integer $k$ and let $\mu=\frac{k\log n}{3\pi n} $.  Also 
suppose $(\pi^2-T)n^{2/3}$ remains bounded as $n\to\infty$, and define $s$ by 
\eqref{def-s}.  Scale $t_i$, $t_j$, $\varphi$, and $\theta$ as in 
\eqref{tacnode-scaling}.
Then
\eq
\label{k-tacnode-kernel}
\lim_{n\to\infty}K_{t_i,t_j}(\varphi,\theta)\left|\frac{dy}{d\eta}\right| = K_{\tau_i,\tau_j}^{(k)}(\xi,\eta;s).
\endeq
\end{theo}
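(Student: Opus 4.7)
The plan is to perform Deift--Zhou nonlinear steepest descent analysis on the Riemann--Hilbert problem for the discrete orthogonal polynomials $p_{n,j}^{(T,\mu,\tau)}$ defined by \eqref{eq:defn_of_discrete_Gaussian_OP}, then extract $K_{t_i,t_j}(\varphi,\theta)$ through a Christoffel--Darboux-type representation and pass to the scaling limit. This follows the template used in \cite{LiechtyW:2016} for the zero-drift case, but with the drift $\mu = k\log n/(3\pi n)$ driving genuinely new behavior at the saturation point. First I would rewrite $\widetilde{K}_{t_i,t_j}(\varphi,\theta)$ in terms of the matrix $Y(z)$ solving the discrete orthogonal polynomial RHP: the sum over $j$ in \eqref{tau-kernel} collapses by Christoffel--Darboux to an expression built from the last two columns of $Y$, with the functions $S_{j,a}$ appearing as Cauchy integrals. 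This reduces \eqref{k-tacnode-kernel} to a pointwise asymptotic analysis of $Y$ in a shrinking neighborhood of the critical point.

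Next I would carry out the usual chain of transformations $Y \to X \to T \to S \to R$. The step $Y \to X$ converts the discrete jumps on $\lattice$ to continuous jumps by telescoping, producing the offset factor $e^{2\pi n\mu} = n^{2k/3}$ together with dependence on $\tau = \epsilon(n)$. The step $X \to T$ is normalization by the $g$-function built from the equilibrium measure with density $\rho(w)$ from \eqref{def-rho} on $(i\mu - 2/\sqrt{T}, i\mu + 2/\sqrt{T})$, the shift by $i\mu$ reflecting the drift; the critical temperature $T = \pi^2$ is precisely the value at which $\rho$ saturates the lattice constraint $\rho \le 1$. The step $T \to S$ opens lenses around the support, giving jump matrices exponentially close to the identity away from the saturation point.

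The main obstacle will be constructing the local tacnode parametrix. The drift produces asymmetric factors $e^{2\pi n\mu} = n^{2k/3}$ that, under the local change of variables $\zeta = c n^{1/3}(z - i\mu)$ for an explicit constant $c$, must be absorbed by the prefactor $\zeta^{-k\sigma_3}$ in the boundary condition for $\widetilde{\bf L}_k$ together with the $n^{\pm k/3}$ factors generated by the rescaling itself. I would verify that the local model problem has unique solution $\widetilde{\bf L}_k(\zeta; s)$: the jumps on $\Sigma_T$ pull back from the $S$-contour, the phase $e^{i(\frac{4}{3}\zeta^3 + s\zeta)\sigma_3}$ matches the Taylor expansion of the $g$-function with $s$ as in \eqref{def-s}, and the asymptotics \eqref{Uk-Vk} of $\mathcal{U}_k, \mathcal{V}_k$ are what ensure the local parametrix matches the outer parametrix on the boundary of the tacnode disk. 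Existence of $\widetilde{\bf L}_k$ follows from solvability of the coupled Painlev\'e-II system \eqref{coupled-PII} with the prescribed asymptotics, after which a small-norm argument yields $R = \mathbb{I} + \bigO(n^{-1/3})$ uniformly.

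Finally, substituting the resulting approximation for $Y$ into the Christoffel--Darboux expression for $K_{t_i,t_j}(\varphi,\theta)$ and applying the change of variables \eqref{tacnode-scaling} converts the discrete sum into the double contour integral \eqref{kernel-Kktilde}: the quadratic phases from the $g$-function contribute $e^{\tau_i u^2/2 - \tau_j v^2/2}$, the linear phases contribute $e^{-i(\xi u - \eta v)}$, and the Christoffel--Darboux structure combined with the definition \eqref{fk-gk-def} produces the factor $(f_k(u;s)g_k(v;s) - g_k(u;s)f_k(v;s))/(2\pi i(u-v))$ in the limit. The subtraction term $\phi_{\tau_i,\tau_j}(\xi,\eta)$ emerges as the scaling limit of ${\mathop{W}^\circ}_{[i,j)}(\varphi,\theta)$ via Poisson summation on $\lattice$, together with the Jacobian $|dy/d\eta|$. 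This establishes \eqref{k-tacnode-kernel}.
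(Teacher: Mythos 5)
Your proposal is correct and follows essentially the same route as the paper: Christoffel--Darboux reduction of the kernel to the discrete-OP Riemann--Hilbert solution, the standard chain of transformations (pole interpolation, $g$-function with the $i\mu$-shifted semicircle density, lens opening), a local Painlev\'e-II parametrix at the saturation point $z=i\mu$ whose $\zeta^{-k\sigma_3}$ normalization absorbs the drift-induced factors $e^{\pm 2\pi n\mu}=n^{\pm 2k/3}$ (matched by an outer parametrix carrying a pole of order $k$ at $i\mu$), a small-norm estimate $\mathbb{I}+\bigO(n^{-1/3})$ valid precisely because $\mu$ sits at the center of the interval \eqref{mu-range}, and finally localization of the double contour integral and the rescaling \eqref{tacnode-scaling}. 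The only organizational difference is that you derive solvability of the model problem from the coupled Painlev\'e-II system, whereas the paper goes the other way (citing solvability of the Jimbo--Miwa Riemann--Hilbert problem and obtaining \eqref{coupled-PII} and the asymptotics \eqref{Uk-Vk} via B\"acklund transformations); this does not affect the argument.
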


Theorem \ref{thm:k-tacnode-kernel} is proven in \S\ref{subsec-kernel-proof} 
(assuming Lemmas \ref{lemma:asymptotics_of_OPs_critical} and \ref{lemma:asymptotics_of_OPs_origin}, which are proven in \S\ref{subsec-lemma-proofs}).  
Note that in Theorem \ref{thm:k-tacnode-kernel} we have chosen $\mu$ to be exactly at the midpoint of the interval \eqref{mu-range}. In fact the theorem holds with slower convergence for any $\mu$ in the interior of this interval, provided it does not approach the endpoints. For ease of the analysis we choose the $\mu$ that gives the best convergence.

\subsection{Outline and notation}  We begin in \S\ref{sec-proofs} with the 
proofs of Theorems \ref{thm-tacnode-winding} and \ref{thm:k-tacnode-kernel}, 
assuming three lemmas describing the asymptotic behavior of the discrete 
Gaussian orthogonal polynomials with a complex weight.  The remainder of the 
paper is dedicated to proving these lemmas.  In \S\ref{sec-rhp} we recall the 
meromorphic Riemann--Hilbert problem associated to the discrete orthogonal 
polynomials derived in \cite{BuckinghamL:2017} and carry out a series of 
transformations in order to arrive at a controllable problem.   The resulting 
(sectionally analytic) Riemann--Hilbert problem has jumps that are close to 
the identity matrix everywhere except in a neighborhood of a certain line 
segment (the \emph{band}).  Discarding the jumps except on the band leads to 
the \emph{outer model problem}, which is constructed in 
\S\ref{sec-parametrices} in a nonstandard way due to a pole of order $k$ at 
the point $i\mu$ on the band.  In \S\ref{sec-parametrices} we also 
construct \emph{parametrices} near the point $i\mu$ and the band endpoints.  
The parametrix near $i\mu$ is constructed in terms of the Painlev\'e-II tau 
functions $\mathcal{U}_k$ and $\mathcal{V}_k$, and for certain values of 
$\mu$ contributes to the leading-order behavior of the solution to the full 
Riemann--Hilbert problem.  This contribution is captured through the 
construction of a \emph{parametrix for the error} that is carried out in 
\S\ref{sec-error}.  In this section we also complete the error 
analysis and prove the three lemmas on discrete orthogonal polynomials.  

\

\noindent
{\it Notation}.  With the exception of 
\eq
\label{pauli-matrices}
\mathbb{I}:=\bbm 1 & 0 \\ 0 & 1 \ebm, \quad \sigma_1:=\bbm 0 & 1 \\ 1 & 0 \ebm, \quad \sigma_3:=\bbm 1 & 0 \\ 0 & -1 \ebm, \quad {\bf 0}:=\bbm 0 & 0 \\ 0 & 0 \ebm,
\endeq
matrices are denoted by bold capital letters.  
We denote the $(jk)$th entry of a matrix ${\bf M}$ by $[{\bf M}]_{jk}$.
In reference to a smooth, oriented contour $\Sigma$, for $z\in\Sigma$ we denote by 
$f_+(z)$ (respectively, $f_-(z)$) the non-tangential limit of $f(\zeta)$ as 
$\zeta$ approaches $\Sigma$ from the left (respectively, the right).

\section{Proofs of the main theorems assuming results on orthogonal polynomials}
\label{sec-proofs}

In this section we prove Theorems \ref{thm-tacnode-winding} and \ref{thm:k-tacnode-kernel} using asymptotic results for the orthogonal polynomials \eqref{eq:defn_of_discrete_Gaussian_OP}. These asymptotic results are stated here and are proved using the Riemann--Hilbert method in the remaining sections of the paper. 

\subsection{Proof of Theorem \ref{thm-tacnode-winding} (winding numbers)}
\label{subsec-winding-thm-proof}
Define the Hankel determinant
\begin{equation}
\Hankel_n(T,\mu,\tau) := \det \left( \frac{1}{n} \sum_{x \in \lattice} x^{j + m - 2} e^{-\frac{Tn}{2}(x^2-2i\mu x)} dx \right)^n_{j, m = 1}
\end{equation}
(recall $L_{n,\tau}$ from \eqref{lattice-def}).  
Then the total winding number $\mathcal{W}_n(T,\mu)$ has the following probability 
mass function (see \cite[Equation (1.16)]{BuckinghamL:2017} and 
\cite[Equation (185)]{LiechtyW:2016}):
\begin{equation}
\label{eq:total_offset_formula}
\Prob(\mathcal{W}_n(T,\mu)=\om) = e^{2\pi i\omega\hsgn{n}} \int_0^1  \frac{\Hankel_n(T,\mu,\tau)}{\Hankel_n(T,\mu,\hsgn{n})} e^{-2\pi i \om \tau}d\tau,
\end{equation}
where $\epsilon(n)$ was defined in \eqref{eq:def_hsgn}.
We exploit the natural connection between Hankel determinants for discrete 
weights and discrete orthogonal polynomials.  
Define $c^{(T,\mu,\tau)}_{n, m,j}$ as the coefficient of the term 
$x^j$ in $p^{(T,\mu,\tau)}_{n, m}(x)$:
\eq
p^{(T,\mu,\tau)}_{n, m}(x) = x^m + \sum_{j=0}^{m-1} c^{(T,\mu,\tau)}_{n, m,j} x^j.
\endeq
It was shown in \cite[Proposition 4.2]{BuckinghamL:2017} that 
\eq
\label{log-Hn-integral}
\log \left(\frac{\Hankel_n(T,\mu,\tau) }{\Hankel_n(T,\mu,\hsgn{n})}\right) = \int_{\hsgn{n}}^\tau \left(inT\mu + Tc^{(T,\mu,v)}_{n, n,n-1}\right)\,dv.
\endeq
Combining \eqref{eq:total_offset_formula} and \eqref{log-Hn-integral} gives
\eq
\label{winding-probs-ito-cnnnm1}
\Prob(\mathcal{W}_n(T,\mu)=\om) = e^{2\pi i\omega\hsgn{n}} \int_0^1  
\exp\left(\int_{\hsgn{n}}^\tau \left(inT\mu + T c_{n,n,n-1}^{(T,\mu,v)}
\right)\,dv\right)
e^{-2\pi i \om \tau}d\tau.
\endeq
Define 
\eq
\label{RV-RU-def}
R_\mathcal{V}\equiv R_\mathcal{V}(s, \mu, n):=\frac{(-1)^n(2n)^{(2k-1)/3}}{\pi e^{2n\pi\mu}}\mathcal{V}_k(s), \quad R_\mathcal{U}\equiv R_\mathcal{U}(s, \mu, n):=\frac{(-1)^n e^{2n\pi\mu}}{\pi (2n)^{(2k+1)/3}}\mathcal{U}_k(s).
\endeq
The following lemma gives the expansion of $c_{n,n,n-1}^{(T,\mu,\tau)}$ 
we need.
\begin{lem}
\label{lemma-cnnnm1}
Fix a non-negative integer $k$ and choose $\mu$ by \eqref{mu-range}.  Also 
suppose $(\pi^2-T)n^{2/3}$ remains bounded as $n\to\infty$, and define $s$ by 
\eqref{def-s}.  Then 
\eq
c_{n,n,n-1}^{(T,\mu,\tau)} = \frac{2ik}{\pi} + \frac{R_\mathcal{V}e^{-2i\pi\tau}}{1-\frac{\pi}{2i}R_\mathcal{V}e^{-2\pi i\tau}} + \frac{R_\mathcal{U}e^{2i\pi\tau}}{1+\frac{\pi}{2i}R_\mathcal{U}e^{2\pi i\tau}} - in\mu + \mathcal{O}\left(\frac{1}{n^{2/3}}\right),
\endeq
where the error term is uniformly bounded in $\tau$.
\end{lem}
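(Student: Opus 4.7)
The starting point is the standard identity $c_{n,n,n-1}^{(T,\mu,\tau)} = [\mathbf{Y}_1]_{11}$, where $\mathbf{Y}_1$ is the coefficient of $z^{-1}$ in the large-$z$ expansion $\mathbf{Y}(z)z^{-n\sg_3}=\mathbb{I}+\mathbf{Y}_1 z^{-1}+\bigO(z^{-2})$ of the meromorphic Riemann--Hilbert problem characterizing $p_{n,n}^{(T,\mu,\tau)}$. The plan is to compute $[\mathbf{Y}_1]_{11}$ by tracking the $z^{-1}$ coefficient through the chain of steepest-descent transformations outlined in the paper, reducing the lemma to the computation of the analogous coefficient for a small-norm error matrix $\mathbf{E}$.

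Two $\tau$-independent contributions appear before the error step. Completing the square in the weight via $e^{-Tn(x^2-2i\mu x)/2}=e^{-Tn(x-i\mu)^2/2}e^{-Tn\mu^2/2}$ and comparing coefficients of monic polynomials in the shifted variable $x-i\mu$ accounts for the term $-in\mu$, since the change of variable $x\mapsto x+i\mu$ turns a monic polynomial $x^n+c\,x^{n-1}+\cdots$ into $x^n+(ni\mu+c)x^{n-1}+\cdots$. The constant $\tfrac{2ik}{\pi}$ comes from the $k$-th order pole at $i\mu$ that must be built into the outer parametrix (constructed in \S\ref{sec-parametrices}): when unwound back to $\mathbf{Y}$ through the transformations, this pole contributes an explicit, $\tau$-independent correction to $[\mathbf{Y}_1]_{11}$ determined by the residue data of the outer model and the $\tfrac{1}{2\pi i}$ normalization in the Plemelj formula.

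The rest of the answer is contained in $[\mathbf{E}_1]_{11}$. The jump of $\mathbf{E}$ is $\mathbb{I}+\bigO(n^{-2/3})$ on all contours where the outer model is matched to the Airy parametrices at the band endpoints, and those jumps contribute only to the error estimate. On the boundary $\d D$ of a small disk around $i\mu$, however, the jump is not uniformly small: the Painlev\'e local parametrix built from $\widetilde{\mathbf{L}}_k$ (and hence from $\mathcal{U}_k(s),\mathcal{V}_k(s)$) produces, to leading order, a jump of the form $\mathbb{I}+\mathbf{J}_{\mathcal{V}}(z)+\mathbf{J}_{\mathcal{U}}(z)$, where $\mathbf{J}_{\mathcal{V}}$ is rank-one upper-triangular with entry proportional to $R_{\mathcal{V}}e^{-2\pi i\tau}$ and $\mathbf{J}_{\mathcal{U}}$ is rank-one lower-triangular with entry proportional to $R_{\mathcal{U}}e^{2\pi i\tau}$. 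The $\tau$-phases are inherited from the shift of the lattice $\lattice$ under interpolation, while the sizes of the entries are dictated by the asymptotics \eqref{Uk-Vk}. Because this effective jump is rational, I would solve the resulting local problem for $\mathbf{E}$ exactly by residue calculus---producing a rank-two linear system in the off-diagonal residues at $i\mu$---rather than by Neumann iteration. Diagonalizing this system reduces the answer to the two geometric-series ratios in the lemma, with the $\tfrac{\pi}{2i}$ in the denominators produced by the Cauchy kernel $\tfrac{1}{z-i\mu}$ paired with the outer parametrix at matching points on $\d D$.

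The main obstacle is the construction and matching of the Painlev\'e local parametrix at $i\mu$: it must simultaneously absorb the $k$-th order pole of the outer model, yield small jumps with prefactors precisely $R_{\mathcal{U}}$ and $R_{\mathcal{V}}$, and remain uniformly controlled as $\tau$ ranges over $[0,1]$. This uniformity is essential for the $\bigO(n^{-2/3})$ error bound to hold uniformly in $\tau$ as claimed; it follows from the fact that $|R_{\mathcal{U}}|,|R_{\mathcal{V}}|=\bigO(1)$ throughout the range \eqref{mu-range} of $\mu$, which ensures that the rank-two linear system governing $\mathbf{E}$ is boundedly invertible and that the Neumann expansion of $\mathbf{E}$ away from $\d D$ converges at the required rate.
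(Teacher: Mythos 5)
Your proposal is correct and follows the paper's strategy in all essentials: the coefficient is read off as the $(1,1)$-entry of the $z^{-1}$ term of the RHP solution, the $-in\mu$ comes from the drift shift (the paper extracts it from the $-i\mu/z$ term in the expansion \eqref{g1} of the $g$-function, which is the rigorous version of your completing-the-square heuristic, since $g(z;T,\mu)=g_0(z-i\mu;T)$), the $\tfrac{2ik}{\pi}$ comes from the order-$k$ pole built into the outer model via $d(z)^{k\sigma_3}$, and the two geometric-ratio terms come from the leading off-diagonal data $R_\mathcal{U},R_\mathcal{V}$ of the Painlev\'e parametrix on $\partial\mathbb{D}_{i\mu}$. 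The one organizational difference is in the error analysis at $\partial\mathbb{D}_{i\mu}$: you propose absorbing both rank-one pieces of the leading jump into a single exactly-solvable rational (rank-two) problem, whereas the paper observes that for each $\mu$ in \eqref{mu-range} exactly one of $Q_\pm$ is $\bigO(1)$ and the other is $\bigO(n^{-1/3})$, puts only the large one into an explicit nilpotent-jump parametrix ${\bf Y}_n^{(k)}$ (solved by a one-pole ansatz, giving $[{\bf B}]_{11}$), and recovers the small one as the first term of the Neumann series for the residual small-norm problem (giving $[{\bf Z}_{n,1}^{(k)}]_{11}$). Your variant is legitimate and arguably more symmetric --- the cross terms it generates are $R_\mathcal{U}R_\mathcal{V}=\bigO(n^{-2/3})$ and hence absorbable --- while the paper's split has the advantage that the exactly-solved piece stays nilpotent (so its jump inverts trivially) and the remaining problem is genuinely small-norm, which is what makes the uniformity in $\tau$ immediate. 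Your attribution of the $\tfrac{\pi}{2i}$ denominators to ``the Cauchy kernel paired with the outer parametrix'' is vague but points at the right mechanism: in the paper these arise from the product $[{\bf F}^{-1}{\bf G}]_{12}[{\bf R}]_{21}$ (and its transpose analogue), i.e.\ the first derivative of the conjugating matrix ${\bf E}_k$ at $i\mu$ meeting the simple pole of ${\bf Q}$ in the solvability condition for the residue matrix.
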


Lemma \ref{lemma-cnnnm1} is proven in \S\ref{subsec-lemma-proofs}.  
Assuming this, we are now ready to prove our main result on winding numbers.  

\begin{proof}[Proof of Theorem \ref{thm-tacnode-winding}]
Combining \eqref{log-Hn-integral}, Lemma \ref{lemma-cnnnm1}, and 
\eqref{T-scaling} (and using the uniform boundedness of the error term to 
integrate with respect to $\tau$) shows 
\eq
\begin{split}
\log &\left(\frac{\Hankel_n(T,\mu,\tau) }{\Hankel_n(T,\mu,\hsgn{n})}\right) = \int_{\hsgn{n}}^\tau \left(inT\mu + T c_{n,n,n-1}^{(T,\mu,\tau)} \right)\,dv \\ 
& \hspace{.2in} = \int_{\hsgn{n}}^\tau 2ik\pi + \frac{\pi^2 R_\mathcal{V}}{1-\frac{\pi}{2i}R_\mathcal{V}e^{-2\pi i\tau}} e^{-2\pi iv} + \frac{\pi^2R_\mathcal{U}}{1+\frac{\pi}{2i}R_\mathcal{U}e^{2\pi i\tau}} e^{2\pi iv} \,dv + \mathcal{O}\left(\frac{1}{n^{2/3}}\right) \\
& \hspace{.2in} = 2ik\pi(\tau-\hsgn{n}) + 
\log\left(1-\frac{\pi}{2i}R_\mathcal{V}e^{-2\pi i\tau}\right) - \log\left(1-\frac{\pi}{2i}R_\mathcal{V}e^{-2\pi i\epsilon(n)}\right) 
\\ 
& \hspace{.45in} + 
\log\left(1+\frac{\pi}{2i}R_\mathcal{U}e^{2\pi i\tau}\right) - \log\left(1+\frac{\pi}{2i}R_\mathcal{U}e^{2\pi i\epsilon(n)}\right) 
+ \mathcal{O}\left(\frac{1}{n^{2/3}}\right).
\end{split}
\endeq
Taking the exponent of both sides gives
\eq
\frac{\Hankel_n(T,\mu,\tau) }{\Hankel_n(T,\mu,\hsgn{n})} = \frac{(-1)^{-(n+1)k}}{1 + \frac{(-1)^{n+1}\pi}{2i}(R_\mathcal{U}-R_\mathcal{V})}  \left(1 + \frac{\pi}{2i}R_\mathcal{U}e^{2\pi i\tau} - \frac{\pi}{2i}R_\mathcal{V}e^{-2\pi i\tau} \right)e^{2ik\pi\tau} + \mathcal{O}\left(\frac{1}{n^{2/3}}\right).
\endeq
Here we have used $e^{2\pi i\hsgn{n}}=(-1)^{n+1}$ from \eqref{eq:def_hsgn} and 
$R_\mathcal{U}R_\mathcal{V}=\mathcal{O}(n^{-2/3})$ from \eqref{RV-RU-def}.  The last step is to use 
\eqref{eq:total_offset_formula} to determine the winding numbers.
\eq
\begin{split}
& \Prob(\mathcal{W}_n(T,\mu) = \om) \\
& = \frac{(-1)^{(n+1)(\omega-k)}}{1 + \frac{(-1)^{n+1}\pi}{2i}(R_\mathcal{U}-R_\mathcal{V})} \int_0^1 \left(1 + \frac{\pi}{2i}R_\mathcal{U}e^{2\pi i\tau} - \frac{\pi}{2i}R_\mathcal{V}e^{-2\pi i\tau} \right)e^{2\pi i\tau(k-\omega)}d\tau + \mathcal{O}\left(\frac{1}{n^{2/3}}\right) \\
& = \begin{cases} 
\displaystyle \frac{\frac{\pi}{2i}(-1)^nR_\mathcal{V}}{1 + \frac{\pi}{2i}(-1)^{n+1}R_\mathcal{U} + \frac{\pi}{2i}(-1)^nR_\mathcal{V}} + \mathcal{O}\left(\frac{1}{n^{2/3}}\right), & \omega=k-1, \vspace{.05in} \\ 
\displaystyle \frac{1}{1 + \frac{\pi}{2i}(-1)^{n+1}R_\mathcal{U} + \frac{\pi}{2i}(-1)^nR_\mathcal{V}} + \mathcal{O}\left(\frac{1}{n^{2/3}}\right), & \omega=k, \vspace{.05in} \\ 
\displaystyle \frac{\frac{\pi}{2i}(-1)^{n+1}R_\mathcal{U}}{1 + \frac{\pi}{2i}(-1)^{n+1}R_\mathcal{U} + \frac{\pi}{2i}(-1)^nR_\mathcal{V}} + \mathcal{O}\left(\frac{1}{n^{2/3}}\right), & \omega=k+1, \vspace{.1in} \\ 
\displaystyle \mathcal{O}\left(\frac{1}{n^{2/3}}\right), & \text{otherwise}. \end{cases} 
\end{split}
\endeq
Along with the definitions of $F_\mathcal{U}$ and $F_\mathcal{V}$ in 
\eqref{FU-FV-def}, this completes the proof of Theorem 
\ref{thm-tacnode-winding}.
\end{proof}

\subsection{Proof of Theorem \ref{thm:k-tacnode-kernel} (the correlation kernel)}
\label{subsec-kernel-proof}
We will compute the aymptotics of the kernel \eqref{tau-kernel-ext} in the scalings \eqref{T-scaling} and \eqref{tacnode-scaling}.
Our proof closely follows that of \cite[Section 5.3]{LiechtyW:2016}.  
We begin with two results on discrete orthogonal polynomials that are proven 
in \S\ref{subsec-lemma-proofs}.  In order to state these two lemmas we define 
the so-called $g$-function and Lagrange multiplier $\ell$:
\eq\label{eq:def-g-function}
g(z)\equiv g(z;T,\mu):=g_0(z-i\mu;T), \quad \ell \equiv \ell(T,\mu) := \ell_0(T) -\frac{T}{2} \mu^2,
\endeq
where
\eq
\label{eq:LWg-definition}
g_0(z;T):=\frac{T}{4}z\left(z-\sqrt{z^2-\frac{4}{T}}\right)-\log\left(z-\sqrt{z^2-\frac{4}{T}}\right) - \frac{1}{2} + \log\frac{2}{T}, \qquad \ell_0(T) := -1-\log T,
\endeq
with the logarithm and square root indicating principal branches.  More 
details concerning the function $g$ are given in \S\ref{sec-rhp}.

The following lemma is the extension of \cite[Proposition 3.7]{LiechtyW:2016} to include the parameter $\mu$.
\begin{lem} \label{lemma:asymptotics_of_OPs_critical}
Fix a non-negative integer $k$ and let $\mu=\frac{k\log n}{3\pi n}$.  Also 
suppose $(\pi^2-T)n^{2/3}$ remains bounded as $n\to\infty$, and define $s$ by 
\eqref{def-s}.  Then the discrete Gaussian orthogonal polynomials 
\eqref{eq:defn_of_discrete_Gaussian_OP} satisfy the estimates 
\eq
  p^{(T,\mu, \tau)}_{n, n}(z) = e^{ng(z)}     [{\bf M}_k^{\rm (out)}(z)]_{11} (1+\bigO(n^{-1/3})), \quad \frac{p^{(T,\mu, \tau)}_{n, n-1}(z)}{h^{(T,\mu,\tau)}_{n, n-1}} = e^{n(g(z)-\ell)}     [{\bf M}_k^{\rm (out)}(z)]_{21} (1+\bigO(n^{-1/3}))  \label{asub1} \\
\endeq
as $n\to\infty$  in the domain $\{ z \mid \lvert \Im z - \mu \rvert > \ep \}$ for any fixed $\ep>0$. Here the function $g(z)$ is defined in \eqref{eq:def-g-function} and the matrix function ${\bf M}_k^{\rm (out)}(z)$ is defined in \eqref{M-k-out}. The errors are uniform in $z$.
    \end{lem}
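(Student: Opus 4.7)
The plan is to obtain this lemma as a direct corollary of the Deift--Zhou steepest descent analysis carried out in Sections \ref{sec-rhp}--\ref{sec-error}. The starting point is the (meromorphic) Riemann--Hilbert problem for the discrete orthogonal polynomials \eqref{eq:defn_of_discrete_Gaussian_OP}, recalled from \cite{BuckinghamL:2017} at the beginning of \S\ref{sec-rhp}, whose solution ${\bf Y}(z)$ has first column given by $p^{(T,\mu,\tau)}_{n,n}(z)$ and a multiple of $p^{(T,\mu,\tau)}_{n,n-1}(z)/h^{(T,\mu,\tau)}_{n,n-1}$ in its $(1,1)$ and $(2,1)$ entries respectively.

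First I would invoke the chain of transformations from \S\ref{sec-rhp}: the conversion of the meromorphic problem into a sectionally analytic one, the $g$-function substitution using \eqref{eq:def-g-function}--\eqref{eq:LWg-definition}, and the opening of lenses around the band $B := [-\tfrac{2}{\sqrt{T}}+i\mu,\tfrac{2}{\sqrt{T}}+i\mu]$. These produce a sectionally analytic matrix ${\bf R}(z)$ normalized to $\mathbb{I}+\bigO(z^{-1})$ at infinity, whose jumps are exponentially close to the identity away from $B$ and from small fixed disks around the endpoints of $B$ and around the special point $i\mu$ where ${\bf M}_k^{\rm (out)}(z)$ has its order-$k$ pole.

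Next, I would use the global parametrix ${\bf P}(z)$ assembled in \S\ref{sec-parametrices}, consisting of the outer model ${\bf M}_k^{\rm (out)}(z)$ on the complement of these disks, Airy parametrices in the endpoint disks, and the Painlev\'e-II parametrix (built from $\mathcal{U}_k$ and $\mathcal{V}_k$) in the disk around $i\mu$. The error analysis of \S\ref{sec-error} shows that ${\bf E}(z) := {\bf R}(z){\bf P}(z)^{-1}$ solves a small-norm RHP with
\[
{\bf E}(z) = \mathbb{I} + \bigO(n^{-1/3})
\]
uniformly outside every fixed neighborhood of $B$ and of the parametrix disks. For $z$ in the region $\{\,|\Im z-\mu|>\ep\,\}$ we have, once $n$ is large enough that all disks have radius $<\ep/2$ and the lenses are chosen to stay near $B$, the identification ${\bf P}(z)={\bf M}_k^{\rm (out)}(z)$ and ${\bf R}(z) = {\bf T}(z)$ (lens opening is trivial off the lenses). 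Inverting the $g$-function substitution then gives, for the $(1,1)$ and $(2,1)$ entries,
\[
p^{(T,\mu,\tau)}_{n,n}(z) = e^{ng(z)}[{\bf M}_k^{\rm (out)}(z)]_{11}(1+\bigO(n^{-1/3})), \qquad \frac{p^{(T,\mu,\tau)}_{n,n-1}(z)}{h^{(T,\mu,\tau)}_{n,n-1}} = e^{n(g(z)-\ell)}[{\bf M}_k^{\rm (out)}(z)]_{21}(1+\bigO(n^{-1/3})),
\]
uniformly in the stated region, as claimed.

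The main obstacle, and essentially the only nontrivial content of the argument, sits in the parametrix construction and error analysis of Sections \ref{sec-parametrices}--\ref{sec-error}: the matching condition on the boundary of the disk around $i\mu$ must only fail by $\bigO(n^{-1/3})$, since the choice $\mu=\frac{k\log n}{3\pi n}$ exactly balances the $e^{\pm 2n\pi\mu}$ factors against the $n^{\mp(2k\pm 1)/3}$ prefactors appearing in the asymptotics \eqref{Uk-Vk} of $\mathcal{U}_k,\mathcal{V}_k$, so that the Painlev\'e-II building block matches the outer model at the predicted rate. Granted the estimates from those sections, the lemma is a bookkeeping exercise in undoing the transformations, and the restriction $|\Im z-\mu|>\ep$ is precisely what is needed to keep $z$ off the band and away from the order-$k$ pole of ${\bf M}_k^{\rm (out)}$ at $i\mu$, where the outer model would otherwise blow up.
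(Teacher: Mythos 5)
Your proposal is correct and follows essentially the same route as the paper: the authors likewise undo the chain of transformations ${\bf P}_n\to{\bf S}_n\to{\bf S}_n^{\rm crit}$, observe that for $z$ with $|\Im z-\mu|>\ep$ the global parametrix reduces to ${\bf M}_k^{\rm(out)}(z)$ and the lens factors are trivial, and use that with $\mu=\frac{k\log n}{3\pi n}$ the error matrix ${\bf X}_n^{(k)}(z)={\bf S}_n^{\rm crit}(z){\bf M}^{(k)}(z)^{-1}$ is uniformly $\mathbb{I}+\bigO(n^{-1/3})$, reading off the first column via \eqref{IP3}. Your identification of the balance between $e^{\pm 2n\pi\mu}$ and $n^{\mp(2k\pm1)/3}$ as the crux of the matching on $\partial\mathbb{D}_{i\mu}$ is exactly the point made in \S\ref{subsec-error-parametrix}.
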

The following lemma extends \cite[Proposition 3.8]{LiechtyW:2016} to include 
the parameter $\mu$.
 \begin{lem} \label{lemma:asymptotics_of_OPs_origin}
Fix a non-negative integer $k$ and a real number $\delta>0$.  Set 
$\mu=\frac{k\log n}{3\pi n}$.  Also suppose $(\pi^2-T)n^{2/3}$ remains 
bounded as $n\to\infty$, and define $s$ by \eqref{def-s}.  Then there exists 
$\ep>0$ such that for all $z,w\in \{z \in \C \lvert \ |z|<\ep n^{-\delta}\}$ 
the following asymptotic formula holds:
  \begin{multline}\label{critical_CD_kernel}
      e^{-\frac{nT}{4}(z^2-2i\mu z+w^2-2i\mu w)} \frac{p^{(T,\mu, \tau)}_{n, n}(z)p^{(T,\mu, \tau)}_{n, n-1}(w)-p^{(T\mu, \tau)}_{n, n-1}(z)p^{(T,\mu, \tau)}_{n, n}(w)}{h^{(T,\mu, \tau)}_{n, n-1}(z-w)} =  \frac{1}{2\pi i (z-w)}  \\
    \times
      \begin{bmatrix}
        -e^{-i\pi(nz-\tau)} \\
        e^{i\pi(nz-\tau)}
      \end{bmatrix}^\mathsf{T}
      \widetilde{\bf L}_k(d n^{\frac{1}{3}} z ; s)^{-1} \widetilde{\bf L}_k(d n^{\frac{1}{3}} w ; s)
      \begin{bmatrix}
        e^{i\pi(nw-\tau)} \\
        e^{-i\pi(nw-\tau)}
      \end{bmatrix}
      \left(1+\bigO(n^{-1/3})+\bigO(n^{1/3-2\delta})+\bigO(n^{-\delta})\right),
  \end{multline}
  where $d = 2^{-5/3}\pi$. Also, the following estimates hold uniformly in $\{z \in \C \lvert \ |z|<\ep \}$:
  \begin{equation} \label{eq:rough_estimate_critical_zero}
    p^{(T,\mu,\tau)}_{n, n}(z) = \bigO\left(n^{\frac{2k}{3}} e^{ng(z)+rn^{1/3}}\right), \quad \frac{p^{(T,\mu,\tau)}_{n, n - 1}(z)}{h^{(T,\mu,\tau)}_{n, n - 1}} = \bigO\left(n^{\frac{2k}{3}} e^{n(g(z) - \ell)+rn^{1/3}}\right)
  \end{equation}
  for some constant $r \in \realR$.
    \end{lem}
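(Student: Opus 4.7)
The plan is to apply the Riemann--Hilbert (RH) machinery for the discrete Gaussian orthogonal polynomials. From Sections \ref{sec-rhp} and \ref{sec-parametrices} we will have: (i) a chain of transformations ${\bf Y} \mapsto \cdots \mapsto {\bf R}$ converting the meromorphic RH problem for $p^{(T,\mu,\tau)}_{n,n}$ into a small-norm problem for a sectionally analytic matrix ${\bf R}(z)$; (ii) an outer parametrix ${\bf M}_k^{\rm (out)}(z)$ with a pole of order $k$ at $i\mu$; and (iii) a local parametrix ${\bf P}^{(0)}(z)$ in a disk $D_\mu$ around $i\mu$ built from $\widetilde{\bf L}_k\bigl(dn^{1/3}(z-i\mu);\,s\bigr)$. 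In $D_\mu$ we have ${\bf R}(z) = {\bf E}(z){\bf P}^{(0)}(z)$, and Section \ref{sec-error} yields ${\bf E}(z) = \mathbb{I} + \bigO(n^{-1/3})$ uniformly.

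First, I would unwind the transformations to express both $p^{(T,\mu,\tau)}_{n,n}(z)\,e^{-nTz^2/4 + inT\mu z/2}$ and $p^{(T,\mu,\tau)}_{n,n-1}(z)/h^{(T,\mu,\tau)}_{n,n-1}$ as specific entries of ${\bf E}(z){\bf P}^{(0)}(z)$, conjugated by the diagonal factor $e^{ng(z)\sigma_3}$ and the triangular factorizations introduced to remove the poles on $\lattice$ in Section \ref{sec-rhp}. The latter produce the column vectors
\begin{equation*}
\begin{bmatrix} e^{-i\pi(nz-\tau)} \\ e^{i\pi(nz-\tau)} \end{bmatrix}, \qquad
\begin{bmatrix} e^{i\pi(nw-\tau)} \\ e^{-i\pi(nw-\tau)} \end{bmatrix}
\end{equation*}
that appear in \eqref{critical_CD_kernel}. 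Next, I would apply the standard Christoffel--Darboux-in-RH-form identity, which writes the bracketed ratio on the left of \eqref{critical_CD_kernel} (after stripping the exponential factor) as $\tfrac{1}{2\pi i(z-w)}$ times a bilinear form in the first and second columns of ${\bf Y}(z)^{-1}{\bf Y}(w)$. Substituting the post-transformation representation and noting that the outer parametrix factor in ${\bf P}^{(0)}(z)^{-1}{\bf P}^{(0)}(w)$ telescopes on the diagonal, the core reduces to $\widetilde{\bf L}_k(dn^{1/3}z;s)^{-1}\widetilde{\bf L}_k(dn^{1/3}w;s)$.

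Third, in the disk $|z|<\ep n^{-\de}$ I would control the four residual errors: (a) shifting the argument from $z-i\mu$ to $z$ in $\widetilde{\bf L}_k$ costs $dn^{1/3}\cdot\mu = \bigO(n^{-2/3}\log n)$, which is absorbed; (b) the factor ${\bf E}(z) = \mathbb{I}+\bigO(n^{-1/3})$ contributes the first error term; (c) the expansion of the outer parametrix around its pole, evaluated at arguments of size $\ep n^{-\de}$, produces deviations of size $\bigO(n^{1/3-2\de})$ from its dominant $\zeta^{k\sig_3}$ behavior after rescaling by $dn^{1/3}$; (d) matching of the local parametrix to the outer one in the transition region produces the $\bigO(n^{-\de})$ error. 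The rough estimate \eqref{eq:rough_estimate_critical_zero} then follows by inserting the asymptotic $\widetilde{\bf L}_k(\zeta;s) \sim \zeta^{k\sg_3}(\mathbb{I}+\bigO(\zeta^{-1}))e^{-i(\frac{4}{3}\zeta^3+s\zeta)\sg_3}$ at $\zeta = dn^{1/3}z$ with $|z|<\ep$: the $\zeta^{k\sg_3}$ factor gives the $n^{2k/3}$ prefactor, while the phase $\frac{4}{3}\zeta^3+s\zeta$ cancels against the corresponding terms in $ng(z)$ up to a real-analytic residual bounded by $rn^{1/3}$ on $|z|<\ep$.

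\textbf{Main obstacle.} The principal difficulty is the uniform bookkeeping in the matching zone $|z|\sim \ep n^{-\de}$: for $k\ge 1$ both ${\bf M}_k^{\rm (out)}(z)$ (with its order-$k$ pole at $i\mu$) and $\widetilde{\bf L}_k(dn^{1/3}z;s)$ (with its growing $\zeta^{k\sg_3}$ tail) are individually large, and only the cancellation between them produces the $\bigO(n^{1/3-2\de})+\bigO(n^{-\de})$ error. Verifying this cancellation requires the precise form of ${\bf M}_k^{\rm (out)}$ constructed in Section \ref{sec-parametrices} (via the identification \eqref{Pk-Qk-ito-hm}) and the matching relation between ${\bf P}^{(0)}$ and ${\bf M}_k^{\rm (out)}$ used to define the jump of ${\bf E}$ on $\d D_\mu$. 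Once that matching is in hand, the remainder of the argument is standard steepest descent bookkeeping.
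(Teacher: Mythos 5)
Your proposal follows essentially the same route as the paper's proof: unwind the Riemann--Hilbert transformations to express $\mathbf P_n$ near $i\mu$ through the error matrix, the analytic prefactor ${\bf E}_k$, and the local parametrix built from $\widetilde{\bf L}_k(\zeta(z);s)$; use $\det\mathbf P_n\equiv 1$ to rewrite the left-hand side of \eqref{critical_CD_kernel} as $\frac{e^{-n(V(z)+V(w))/2}}{z-w}\bbm 0 & 1\ebm\mathbf P_n(z)^{-1}\mathbf P_n(w)\bbm 1 \\ 0\ebm$ so that the outer/analytic factors telescope; and charge the three error terms to the error matrix, the Taylor expansion of ${\bf E}_k(z)^{-1}{\bf E}_k(w)$, and the replacement of $\zeta(z)$ by $dn^{1/3}z$. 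The only cosmetic difference is in the rough estimate, where the paper extracts the $n^{2k/3}$ prefactor from $e^{2n\pi\mu}$ hidden in the lens-opening factors rather than from the $\zeta^{k\sigma_3}$ tail of $\widetilde{\bf L}_k$ (which by itself only yields $n^{k/3}$); either bookkeeping is harmless since any polynomial factor is absorbed by the $e^{rn^{1/3}}$ slack.
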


Assuming these two results, we can now prove Theorem 
\ref{thm:k-tacnode-kernel}.  

\begin{proof}[Proof of Theorem \ref{thm:k-tacnode-kernel}]
From \eqref{tau-kernel-ext}, we need to understand 
$\widetilde{K}_{t_i,t_j}(\varphi,\theta)$ and 
$\displaystyle{\mathop{W}^\circ}_{[i,j)}(\varphi,\theta)$.  The second 
function is exactly the one in \cite[Equation (133)]{LiechtyW:2016}, and in 
this scaling it was shown in \cite[Equation (270)]{LiechtyW:2016} that
\eq\label{non-ess-limit}
\lim_{n\to\infty} \frac{\pi}{2^{5/3} n^{2/3}} {\mathop{W}^\circ}_{[i,j)}(\varphi,\theta) = \phi_{\tau_i,\tau_j}(\xi,\eta).
%\frac{1}{\sqrt{2\pi(\tau_j - \tau_i)}} e^{-\frac{(\eta - \xi)^2}{2(\tau_j - \tau_i)}}.
\endeq
We now compute $\widetilde{K}_{t_i, t_j}(\varphi,\theta)$.  
Set 
\eq
d:=2^{-5/3} \pi
\endeq
and define $\Sigma$ to be the contour that in the upper half-plane connects 
$\infty\cdot e^{i0}$ to $\sqrt{3}+i$ to $(\sqrt{3}+i)d^{-1}n^{-1/3}$ to 
$(-\sqrt{3}+i)d^{-1}n^{-1/3}$ to $-\sqrt{3}+i$ to $\infty\cdot e^{i\pi}$ 
(oriented right-to-left), and in the lower half-plane is the reflection of 
the contour in the upper half-plane through the origin 
(oriented left-to-right).  
Our starting point 
for asymptotics is the formula 
%\begin{equation} \label{eq:double_countour_for_tacnode}
%\begin{aligned}
%\widetilde{K}_{t_i, t_j}(\varphi,\theta)
%&=\frac{n}{2\pi} \int_\Sg dz \int_{\Sg} dw\, e^{-\frac{n}{2}[t_j w^2+(T-t_i)z^2]} e^{in(\theta z-\varphi w)} \\
%&\hspace{1cm} \times\left(\frac{p^{(T,\mu,\tau)}_{n, n}(z)p^{(T,\mu,\tau)}_{n, n-1}(w)-p^{(T,\mu,\tau)}_{n, n-1}(z)p^{(T,\mu,\tau)}_{n, n}(w)}{h_{n,n-1}^{(T,\mu,\tau)}(z-w)}\right)\frac{e^{2\pi i(nz-\tau)}}{(e^{2\pi i(nz-\tau)}-1)(e^{2\pi i(nw-\tau)}-1)}, \\
%\end{aligned}
%\end{equation}
%where $\Sigma$ is a contour consisting of two pieces, $\Sg_+$ and $\Sg_-$. The contour $\Sg_+$ lies entirely above the real line and passes from $e^{0\cdot \infty}$ to $e^{\pi \cdot \infty}$, and the contour $\Sg_-$ lies entirely below the real line and passes from $e^{-\pi \cdot \infty}$ to $e^{0\cdot \infty}$. In the scalings \eqref{tacnode-scaling} and \eqref{Ttx-scaling} this becomes 
\begin{equation} \label{tac8}
\widetilde{K}_{t_i, t_j}(\varphi,\theta)= \frac{n}{2\pi} \int_{\Sg} dz \int_{\Sg+(i/2d)n^{-2/3}} dw J(z, w),
\end{equation}
where
\begin{multline}
\label{J-def}
J(z, w) := \left( e^{-\frac{nT}{4}(z^2-2i\mu z + w^2-2i\mu w)} \frac{p^{(T,\mu,\tau)}_{n, n}(z)p^{(T,\mu,\tau)}_{n, n-1}(w)-p^{(T,\mu,\tau)}_{n, n-1}(z)p^{(T,\mu,\tau)}_{n, n}(w)}{h_{n,n-1}^{(T,\mu,\tau)}\cdot(z-w)}\right) \\   \times  e^{n^{\frac{2}{3}} \frac{d^2}{2} (\tau_i (z^2-2i\mu z) - \tau_j (w^2-2i\mu w))} e^{-in^{\frac{1}{3}} d (\xi z - \eta w)} 
\frac{e^{\pi i(nz-\tau)} e^{\pi i(nw-\tau)}}{(e^{2\pi i(nz-\tau)}-1)(e^{2\pi i(nw-\tau)}-1)}.
\end{multline}
%Here $\Sigma$ is a contour defined as follows. The part of $\Sg$ in the first quadrant consists of a horizontal ray from $\infty \cdot e^{0}$ to $\sqrt{3} + i$, a line segment from $\sqrt{3} + i$ to $(\sqrt{3} + i) d^{-1} n^{-1/3}$, and a line segment from $(\sqrt{3} + i) d^{-1} n^{-1/3}$ to $i d^{-1} n^{-1/3}$. The part of $\Sigma$ in the second quadrant is a reflection of that in the first quadrant about the imaginary axis, and the part of $\Sigma$ in the lower half plane is a reflection of that in the upper half plane about the real axis. $\Sigma \cap \C_+$ is oriented from right to left, and $\Sigma \cap \C_-$ is from left to right. 
Equations \eqref{tac8} and \eqref{J-def} are the same as  
\cite[Equations (258) and (259)]{LiechtyW:2016} when $\mu=0$. 
We denote by $\Sigma_{\rm local}$, $\Sigma^{\rm upper}_{\rm local}$, and 
$\Sigma^{\rm lower}_{\rm local}$ the contours
\begin{equation}
\Sigma_{\rm local} := \Sigma \cap \{ z \in \C \mid \lvert z \rvert < n^{-\frac{1}{4}} \}, \quad \Sigma^{\rm upper}_{\rm local} := \Sigma_{\rm local} \cap \C_+, \quad \Sigma^{\rm lower}_{\rm local} := \Sigma_{\rm local} \cap \C_-.
\end{equation}
We claim that for large $n$ the integral \eqref{tac8} becomes localized on $\Sg_{\rm local}$. To justify this claim, we apply Lemma \ref{lemma:asymptotics_of_OPs_critical} and Equation \eqref{eq:rough_estimate_critical_zero} to the function $J(z,w)$ to obtain the estimate that for all $z\in \Sg\setminus \Sg_{\rm local}$ and $w\in  \{\Sg+(i/2d)n^{-2/3}\}\setminus \{\Sg_{\rm local}+(i/2d)n^{-2/3}\}$,
\begin{multline}\label{J_est}
  \lvert J(z, w) \rvert =  \bigO(e^{n(\widetilde{g}(z) - \frac{T}{4} (z^2-2i\mu z) + \pi iz + \widetilde{g}(w) - \frac{T}{4} (w^2-2i\mu w) + \pi iw-\ell)}) \\ \times \bigO(n^{\frac{4k}{3}}e^{2rn^{1/3}})e^{n^{\frac{2}{3}} d^2 (\tau_i (z^2-2i\mu z) - \tau_j (w^2-2i\mu w))} e^{-in^{\frac{1}{3}} d (\xi z - \eta w)} \bigO(n^{\frac{2}{3}}),
\end{multline}
where $\widetilde{g}$ is defined as
\begin{equation} \label{eq:defn_tilde_g}
  \widetilde{g}(z) :=
  \begin{cases}
    g(z) & \text{if $\Im z > \mu$}, \\
    g(z) + i\pi - 2\pi i z & \text{if $\Im z < \mu$}.
  \end{cases}
\end{equation}
The $\bigO(n^{2/3})$ contribution comes from $(z-w)^{-1}$. By direct calculation, we find that for $z \in \Sigma \setminus \Sigma_{\rm local}$, $\Re(\widetilde{g}(z) - T (z^2-2i\mu z)/4 + \pi iz)$ decreases as $z$ moves away from $0$. Hence, by a standard steepest-descent argument we have 
that
\begin{equation}\label{tac17a}
  \begin{split}
    \widetilde{K}_{t_i, t_j}(\theta,\varphi) = & \frac{n}{2\pi} \int_{\Sg} dz \int_{\Sg + \frac{i}{2} d^{-1} n^{-2/3}} dw J(z, w) \\
    = & \frac{n}{2\pi} \int_{\Sg_{\rm local}} dz \oint_{\Sg_{\rm local} + \frac{i}{2} d^{-1} n^{-2/3}} dw J(z, w) + \bigO(e^{-cn^{\frac{1}{4}}}), \\
  \end{split}
\end{equation}
where $c$ is a positive constant. 

Inserting the estimates \eqref{eq:rough_estimate_critical_zero}  
into the integral \eqref{tac17a} and making the change of variables $u=dn^{1/3} z$ and $v = dn^{1/3} w$, we obtain that 
    \begin{multline} \label{tac10}
  \frac{n}{2\pi} \int_{\Sg_{\rm local}} dz \int_{\Sg_{\rm local} + \frac{i}{2} d^{-1} n^{-2/3}} dw J(z, w) = \frac{n^{\frac{2}{3}}}{4\pi^2 i d} \int_{\Sg_T^*} du \int_{\Sg_T^* + \frac{i}{2}} dv \frac{e^{\frac{1}{2}(\tau_i u^2 - \tau_j v^2) - i(\xi u - \eta v)}}{u - v} \\
  \times
  \begin{bmatrix}
    \frac{1}{1 - e^{2\pi i(nz - \tau)}} \\
    \frac{1}{1 - e^{-2\pi i(nz - \tau)}} 
  \end{bmatrix}^\mathsf{T}
\widetilde{  {\bf L}}_k(u ; s)^{-1} \widetilde{{\bf L}}_k(w ; s)
  \begin{bmatrix}
    \frac{1}{1 - e^{-2\pi i(nw - \tau)}} \\
    \frac{-1}{1 - e^{2\pi i(nw - \tau)}} 
  \end{bmatrix}
  \left(1 + \bigO(n^{-\frac{1}{6}})\right),
\end{multline}
  where $\Sg_T^* := \Sg_T \cap \{z : |z|< n^{1/12}\}$ and $\Sg_T$ is defined following \eqref{kernel-non-ess}. Noting that the factors  $ \frac{1}{1 - e^{\pm 2\pi i(nz - \tau)}} $ and $ \frac{1}{1 - e^{\pm 2\pi i(nz - \tau)}} $ are either $\bigO(e^{-2n^{2/3}/d})$ or $(1+\bigO(e^{-2n^{2/3}/d}))$ depending on if $z$ and/or $w$ are in the upper or lower half-plane (see \cite[Equations (264) and (265)]{LiechtyW:2016}), we find
  \begin{equation} \label{eq:local_est_K_tac}
  \begin{split}
    & \frac{n}{2\pi} \int_{\Sg_{\rm local}} dz \int_{\Sg_{\rm local} + \frac{i}{2} d^{-1} n^{-2/3}} dw J(z, w) \\
    = & \frac{n^{\frac{2}{3}}}{4\pi^2 i d} \int_{\Sg_T^*} du \int_{\Sg_T^* + \frac{i}{2}} dv e^{(\frac{\tau_i}{2} u^2 - \frac{\tau_j}{2} v^2) - i(\xi u - \eta v)}\frac{f_k(u; s)g_k(v;s) - g_k(u; s)f_k(v; s)}{(u - v)} \left(1 + \bigO\left(n^{-\frac{1}{6}}\right)\right), \\
  \end{split}
\end{equation}
where $f_k(u;s)$ and $g_k(u;s)$ are defined in \eqref{fk-gk-def}.
Combining \eqref{eq:local_est_K_tac} and \eqref{non-ess-limit} we obtain
\eq 
\lim_{n\to\infty} \frac{\pi}{2^{5/3} n^{2/3}}K_{t_i, t_j}(\varphi,\theta)  = \widetilde{K}_{\tau_i,\tau_j}^{(k)}(\xi,\eta;s) - \phi_{\tau_i,\tau_j}(\xi,\eta),
\endeq
where $\phi_{\tau_i,\tau_j}(\xi,\eta)$ and 
$\widetilde{K}_{\tau_i,\tau_j}^{(k)}(\xi,\eta;s)$ are as defined in 
\eqref{kernel-non-ess} and \eqref{kernel-Kktilde}, respectively, and the contour $\Sg_T+\frac{i}{2}$ is easily deformed to $\Sg_T$ after taking the limit.
\end{proof}

\section{Setup of the Riemann--Hilbert problem}
\label{sec-rhp}

We obtain our asymptotic results on the discrete Gaussian orthogonal 
polynomials by analyzing the following Riemann--Hilbert problem 
\cite{BuckinghamL:2017}.
\begin{rhp}[Discrete Gaussian orthogonal polynomial problem]
\label{rhp-dop}
Fix $n\in\{1,2,3,\ldots\}$ and $\tau\in[0,1]$ and find a $2\times 2$ matrix-valued 
function $\mathbf P_n(z)$ with the 
following properties:
\begin{itemize}
\item[]{\bf Analyticity:} $\mathbf P_n(z)$ is a meromorphic function of $z$ 
and is analytic for $z\in\C\setminus L_{n,\tau}$.
\item[]{\bf Normalization:}  There exists a function $\textfrak{r}(x)>0$ on  $L_{n,\tau}$ 
such that 
\begin{equation} \label{IP2a}
\lim_{x\to\infty} \textfrak{r}(x)=0
\end{equation} 
and such that, as $z\to\infty$, $\mathbf P_n(z)$ admits the asymptotic 
expansion
\begin{equation} \label{IP2}
\mathbf P_n(z) = \left(\mathbb{I}+\frac{\mathbf P_{n,1}}{z}+\frac{\mathbf P_{n,2}}{z^2}+ \mathcal{O}\left(\frac{1}{z^3}\right) \right)
\begin{bmatrix}
z^n & 0 \\
0 & z^{-n}
\end{bmatrix},\quad z\in \C\setminus \left[\bigcup_{x\in \lattice}^\infty D\big(x,\textfrak{r}(x)\big)\right],
\end{equation}
where ${\bf P}_{n,1}$ and ${\bf P}_{n,2}$ are independent of $z$, and 
$D(x,\textfrak{r}(x))$ denotes a disk of radius $\textfrak{r}(x)>0$ centered at $x$.
\item[]{\bf Residues at poles:}  At each node $x\in L_{n,\tau}$, the elements 
$[\mathbf P_n(z)]_{11}$ and $[\mathbf P_n(z)]_{21}$ of the matrix 
$\mathbf P_n(z)$ 
are analytic functions of $z$, and the elements $[\mathbf P_n(z)]_{12}$ and
$[\mathbf P_n(z)]_{22}$ have a simple pole with the residues
\begin{equation} \label{IP1}
\underset{z=x}{\rm Res}\; [\mathbf P_n(z)]_{j2}=\frac{1}{n} e^{-\frac{nT}{2}(x^2-2i\mu x)} [\mathbf P_n(x)]_{j1},\quad j=1,2.
\end{equation}
\end{itemize}
\end{rhp}
Define the weighted discrete Cauchy transform $C$ as 
\eq
\label{eq:def_Cauchy_trans}
Cf(z):=\frac{1}{n}\sum_{x\in L_{n,\tau}}\frac{f(x)e^{-\frac{nT}{2}(x^2-2i\mu x)}}{z-x}.
\endeq
Then the unique solution to Riemann--Hilbert Problem \ref{rhp-dop} 
(see \cite{FokasIK:1991,BleherL:2011}) is 
\begin{equation} \label{IP3}
\mathbf P_n(z) :=
\begin{bmatrix}
p_{n,n}^{(T,\mu,\tau)}(z) & \left(Cp_{n,n}^{(T,\mu,\tau)}\right)(z) \\
(h^{(T,\mu,\tau)}_{n,n-1})^{-1}p_{n,n-1}^{(T,\mu,\tau)}(z) & (h^{(T,\mu,\tau)}_{n,n-1})^{-1}\left(Cp_{n,n-1}^{(T,\mu,\tau)}\right)(z)
\end{bmatrix}.
\end{equation}
In particular, the coefficient $c_{n,n,n-1}^{(T,\mu,\tau)}$ can be 
calculated via
\begin{equation}\label{IP5a}
c^{(T,\mu,\tau)}_{n, n,n-1} = [\mathbf P_{n,1}]_{11}.
\end{equation}
For subcritical drift values, this Riemann--Hilbert problem was transformed 
in \cite{BuckinghamL:2017} via consecutive changes of variables 
${\bf P}_n\to{\bf R}_n\to{\bf T}_n\to{\bf S}_n$ to a controlled problem 
with either constant or near-identity jumps.  As we will see below, using 
exactly the same changes of variables in the $k$-tacnode regime leads to a 
Riemann--Hilbert problem where the jumps are controlled except in a 
neighborhood of the origin.  The jumps near the origin will be controlled 
by an additional change of variables ${\bf S}_n\to{\bf S}_n^\text{crit}$ and 
the use of a local parametrix.  We begin by combining the 
interpolation of poles, introduction of the $g$-function, 
and opening of lenses into one change of variables ${\bf P}_n\to{\bf S}_n$.  
Define 
\begin{equation}
\mathbf D^u_{\pm}(z) := \begin{bmatrix} 1 & \displaystyle \frac{-\pi}{\sin(n\pi z-\tau\pi)}e^{-nT(z^2-2i\mu z)/2}e^{\pm i\pi(n  z-\tau)} \\ 0 & 1 \end{bmatrix} \quad \text{and} \quad {\bf A}:=\begin{bmatrix} 1 & 0 \\ 0 & -2\pi i \end{bmatrix}.
\end{equation}
Recall the $g$-function $g$ and Lagrange multiplier $\ell$ defined in \eqref{eq:def-g-function}. Also define the complex numbers $a$ and $b$ as
 \eq
a\equiv a(T,\mu):=-\frac{2}{\sqrt{T}}+i\mu, \quad b\equiv b(T,\mu):=\frac{2}{\sqrt{T}}+i\mu,
\endeq
and the potential function
\eq
\label{V-def}
V(z)\equiv V(z;T,\mu) :=\frac{Tz^2}{2}-iT\mu z.
\endeq
We will denote the horizontal line segment between $a$ and $b$ by $(a,b)$, and the half-infinite horizontal ray  $\{ x+i\mu \, | \, x\le 2/\sqrt{T}\}$ by $(-\infty +i\mu, b]$.
Here we note that the function $g(z)$ is the one appearing in the asymptotic analysis of Hermite polynomials \cite{DeiftKMVZ:1999} up to rescaling and a shift, and we record some of its analytic properties.
\begin{itemize}
\item  $g(z)$ is also given by the integral formula
\begin{equation}\label{log-def-g}
g(z) = \int_a^b \log(z-w)\rho(w)\,dw,
\end{equation}
where the principal branch of the logarithm is taken, and $\rho(w)$ is the semicircle density on the interval $(a,b)$ as defined in \eqref{def-rho}.
\item $g(z)$ is analytic for $z\in \C \setminus (-\infty +i\mu, b]$.
\item $g(z)$ takes limiting values from above or below the ray $(-\infty +i\mu, b]$, which we denote $g_+(z)$ and $g_-(z)$, respectively. These functions satisfy the variational condition
\begin{equation} \label{eq:variational_condition}
  g_+(z) + g_-(z) - V(z) - \ell
  \begin{cases}
    = 0, & z\in\{\Im z = \mu\}\cap\{|\Re z| \le 2/\sqrt{T}\}, \\
    < 0, & z\in\{\Im z = \mu\}\cap\{|\Re z| > 2/\sqrt{T}\}.
  \end{cases}
\end{equation}
\item As $ z \to\infty$, $g(z)$ has the expansion
\eq
\label{g1}
g(z) = \log(z) - \frac{i\mu}{z} + \mathcal{O}\left(\frac{1}{z^2}\right).
\endeq
\end{itemize}

Now fix small positive constants $\epsilon$ and $\delta$ and define the 
following regions:
\eq
\begin{split}
\Omega_+ & := \left\{z : |\Re z|<\frac{2}{\sqrt{T}} \text{ and } \mu<\Im z<\mu+\epsilon \right\}, \quad \Omega_- := \left\{z : |\Re z|<\frac{2}{\sqrt{T}} \text{ and } -\delta<\Im z<\mu \right\},\\
\widetilde\Omega_+ & := \left\{z : |\Re z|>\frac{2}{\sqrt{T}} \text{ and } \mu<\Im z<\mu+\epsilon \right\}, \quad \widetilde\Omega_- := \left\{z : |\Re z|>\frac{2}{\sqrt{T}} \text{ and } -\delta<\Im z<\mu \right\}.
\end{split}
\endeq
See Figure \ref{fig-Sn-jumps}.  The boundary between $\Omega_+$ and 
$\Omega_-$ is the \emph{band} $(a,b)$.
The jump matrices will decay to the identity as $n\to\infty$ except on 
this interval.  
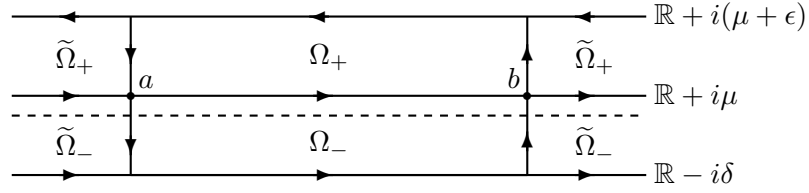
\begin{figure}[h]
\setlength{\unitlength}{1.5pt}
\begin{center}
\begin{picture}(100,60)(-50,5)
\thicklines
\put(-80,55){\line(1,0){160}}
\put(-80,35){\line(1,0){160}}
\put(-50,15){\line(1,0){100}}
\multiput(-80,30)(4,0){40}{\line(1,0){2}}
\put(-80,15){\line(1,0){160}}
\put(-50,15){\line(0,1){40}}
\put(50,15){\line(0,1){40}}
\put(82,33){$\mathbb{R}+i\mu$}
\put(82,53){$\mathbb{R}+i(\mu+\epsilon)$}
\put(82,13){$\mathbb{R}-i\delta$}
\put(-50,35){\circle*{2}}
\put(50,35){\circle*{2}}
\put(-48,37){$a$}
\put(45,37){$b$}
\put(-69,42){$\widetilde\Omega_+$}
\put(62,42){$\widetilde\Omega_+$}
\put(-69,20){$\widetilde\Omega_-$}
\put(62,20){$\widetilde\Omega_-$}
\put(-5,43){$\Omega_+$}
\put(-5,21){$\Omega_-$}
\put(0,55){\vector(-1,0){5}}
\put(67,55){\vector(-1,0){5}}
\put(-63,55){\vector(-1,0){5}}
\put(62,35){\vector(1,0){5}}
\put(-68,35){\vector(1,0){5}}
\put(-4,35){\vector(1,0){5}}
\put(62,15){\vector(1,0){5}}
\put(-68,15){\vector(1,0){5}}
\put(-4,15){\vector(1,0){5}}
\put(50,43){\vector(0,1){5}}
\put(-50,48){\vector(0,-1){5}}
\put(-50,25){\vector(0,-1){5}}
\put(50,21){\vector(0,1){5}}
\end{picture}
\end{center}
\caption{\label{fig-Sn-jumps} The jump contours for ${\bf S}_n$ along with their orientations and the regions $\Omega_\pm$ and $\widetilde{\Omega}_\pm$. The real axis is the dotted horizontal line.}
\end{figure}
Also define the function 
\eq\label{G-def}
G(z) \equiv G(z;T,\mu) := \begin{cases} 2g(z;T,\mu)-V(z;T,\mu)-\ell, & z\in\Omega_+, \\ -2g(z;T,\mu)+V(z;T,\mu)+\ell, & z\in\Omega_-.  \end{cases} 
\endeq
From the equilibrium condition \eqref{eq:variational_condition}, we see that $G(z)$ is analytic in $\Om_+ \cup \Om_-$, and on the band $(a,b)$ it is given by the formula 
\eq\label{diff-def-G}
G(z) = g_+(z) - g_-(z), \quad z\in (a,b).
\endeq
Furthermore, combining \eqref{log-def-g} and \eqref{diff-def-G} gives the 
formula
\eq\label{int-def-G}
G(z) = 2\pi i \int_z^b \rho(w)\,dw, \qquad z\in (a,b),
\endeq
which naturally extends analytically into $\Om_+ \cup \Om_-$.

We are now ready to define the matrix ${\bf S}_n(z)$.  Set
\eq
\label{st2}
{\bf S}_n(z):=\begin{cases} 
\vspace{.02in}
{\bf A} e^{-n\ell\sg_3/2} \mathbf P_n(z) \mathbf D_+^u(z) e^{-n(g(z) - \ell/2)\sg_3} {\bf A}^{-1} \bbm 1 & 0 \\ -e^{-nG(z)} & 1 \ebm, & z\in\Omega_+,\\ 
{\bf A} e^{-n\ell\sg_3/2} \mathbf P_n(z) \mathbf D_-^u(z) e^{-n(g(z) - \ell/2)\sg_3} {\bf A}^{-1} \bbm 1 & 0 \\ e^{nG(z)} & 1 \ebm, & z\in\Omega_-,\\ 
{\bf A} e^{-n\ell\sg_3/2} \mathbf P_n(z) \mathbf D_+^u(z) e^{-n(g(z) - \ell/2)\sg_3} {\bf A}^{-1}, & z\in\widetilde{\Omega}_+,\\ 
{\bf A} e^{-n\ell\sg_3/2} \mathbf P_n(z) \mathbf D_-^u(z) e^{-n(g(z) - \ell/2)\sg_3} {\bf A}^{-1}, & z\in\widetilde{\Omega}_-,\\ 
{\bf A} e^{-n\ell\sg_3/2} \mathbf P_n(z) e^{-n(g(z) - \ell/2)\sg_3} {\bf A}^{-1}, & \text{otherwise}. 
\end{cases}
\endeq
Here the matrices ${\bf D}_\pm^u(z)$ and ${\bf A}$ are for the interpolation 
of poles, the diagonal matrices involving $g(z)$ and $\ell$ are how the 
$g$-function is introduced, and the lower-triangular matrices involving 
$G(z)$ are for the opening of lenses.  As $z\to\infty$, ${\bf S}_n(z)$ satisfies
\eq\label{S-bc}
{\bf S}_n(z)= \mathbb{I} +\bigO(z^{-1}).
\endeq
Also, ${\bf S}_n(z)$ satisfies the 
jump conditions ${\bf S}_{n+}(z)={\bf S}_{n-}(z){\bf V}^{({\bf S})}(z)$, 
where the orientation is given in Figure \ref{fig-Sn-jumps} and the jumps 
are given by 
\begin{equation}\label{st3a}
{\bf V}^{({\bf S})}(z):=
\begin{cases}
\bbm 1 &  \frac{e^{nG(z)}}{1-e^{- 2\pi i(nz-\tau)}} \\ 0 & 1 \ebm, & z\in (\mathbb{R}+i(\mu+\epsilon))\backslash(a+i\epsilon,b+i\epsilon), \vspace{.03in} \\
\bbm 1 &  \frac{e^{nG(z)}}{1-e^{- 2\pi i(nz-\tau)}} \\ 0 & 1 \ebm  \bbm 1 & 0 \\ -e^{-nG(z)} & 1 \ebm, & z\in (a+i\epsilon,b+i\epsilon), \vspace{.03in} \\
\bbm 1 & 0 \\ -e^{-nG(z)} & 1 \ebm, & z\in(a,a+i\epsilon)\cup(b,b+i\epsilon), \vspace{.03in} \\
\bbm 1 & e^{n(g_+(z)+g_-(z)-V(z)-\ell)} \\ 0 & 1 \ebm, & z\in (\mathbb{R}+i\mu)\backslash(a,b), \vspace{.03in} \\
\bbm 0 & 1 \\ -1 & 0 \ebm, & z\in (a,b),  \vspace{.03in} \\
\bbm 1 &  0 \\ e^{nG(z)} & 1 \ebm, & z\in (a,a-i\delta)\cup(b,b-i\delta), \vspace{.03in} \\
\bbm 1 & -\frac{e^{-nG(z)}}{1-e^{2\pi i(nz-\tau)}} \\ 0 & 1 \ebm \bbm 1 & 0 \\ e^{nG(z)} & 1 \ebm, & z\in(a-i\delta,b-i\delta), \vspace{.03in} \\
\bbm 1 & -\frac{e^{-nG(z)}}{1-e^{2\pi i(nz-\tau)}} \\ 0 & 1 \ebm, & z\in (\realR -i\delta)\backslash(a-i\delta,b-i\delta). 
\end{cases}
\end{equation}
The jump conditions and the boundary condition \eqref{S-bc} determine 
${\bf S}_n$ uniquely (see, for example, \cite[Theorem 7.18]{Deift:1998}).

In the $k$-tacnode scaling, the convergence of the jump matrices to the identity matrix fails in a neighborhood of the origin and we need to make a local transformation in a small neighborhood of $z=i\mu$. To that end, first define the regions $\Omega_{\pm}^\Delta$ as
      \begin{equation}
      \begin{aligned}
  \Omega_{+}^\Delta &= \left\{ z =x+iy :   -y+\mu < x< y-\mu, \ \mu < y < \mu+\epsilon \right\}, \\
  \Omega_{-}^\Delta &= \left\{ z =x+iy :   y-\mu < x< -y+\mu, \ -\delta < y < \mu\right\}
  \end{aligned}
  \end{equation}
(see Figure \ref{fig-crit-jumps}). Now define the matrix $\mathbf S_n^{\rm crit}(z)$ as
    \begin{equation}
  \mathbf S_n^{\rm crit}(z) :=  \left\{
   \begin{aligned}
 &   \mathbf S_n(z) \begin{bmatrix} 1 & \pm e^{\pm nG(z)}e^{\pm 2\pi i(nz-\tau)} \\ 0 & 1 \end{bmatrix}, \qquad z\in \Omega_{\pm}^\Delta, \\
&   \mathbf  S_n(z), \qquad {\rm otherwise},
   \end{aligned}\right.
   \end{equation}
where $\mathbf S_n(z)$ is defined in \eqref{st2}. The matrix function $\mathbf S_n^{\rm crit}(z)$ satisfies a Riemann--Hilbert problem similar to $\mathbf S_n(z)$, but with additional jumps in the boundaries of $\Omega_{\pm}^\Delta$, which we denote $\ga_1$, $\ga_2$, $\ga_3$, and $\ga_4$, and orient as shown in Figure \ref{fig-crit-jumps}.

\begin{figure}[h]
\setlength{\unitlength}{1.5pt}
\begin{center}
\begin{picture}(100,60)(-50,5)
\thicklines
\put(-80,55){\line(1,0){160}}
\put(-80,35){\line(1,0){160}}
\put(-50,15){\line(1,0){100}}
\multiput(-80,30)(4,0){40}{\line(1,0){2}}
\put(-80,15){\line(1,0){160}}
\put(-50,15){\line(0,1){40}}
\put(50,15){\line(0,1){40}}
\put(-30,15){\line(3,2){60}}
\put(-30,55){\line(3,-2){60}}
%\put(82,-2){$\mathbb{R}$}
\put(82,33){$\mathbb{R}+i\mu$}
\put(82,53){$\mathbb{R}+i(\mu+\epsilon)$}
\put(82,13){$\mathbb{R}-i\delta$}
\put(-50,35){\circle*{2}}
\put(50,35){\circle*{2}}
\put(0,35){\circle*{2}}
\put(-48,37){$a$}
\put(45,37){$b$}
\put(-5,44){$\Omega_+^\Delta$}
\put(-5,21){$\Omega_-^\Delta$}
\put(-5,59){$\gamma_1$}
\put(20,44){$\gamma_2$}
\put(-26,44){$\gamma_2$}
\put(-26,24){$\gamma_3$}
\put(20,24){$\gamma_3$}
\put(-5,9){$\gamma_4$}
\put(-38,55){\vector(-1,0){5}}
\put(0,55){\vector(-1,0){5}}
\put(42,55){\vector(-1,0){5}}
\put(67,55){\vector(-1,0){5}}
\put(-63,55){\vector(-1,0){5}}
\put(62,35){\vector(1,0){5}}
\put(-68,35){\vector(1,0){5}}
\put(-32,35){\vector(1,0){5}}
\put(24,35){\vector(1,0){5}}
\put(62,15){\vector(1,0){5}}
\put(-68,15){\vector(1,0){5}}
\put(-42,15){\vector(1,0){5}}
\put(-4,15){\vector(1,0){5}}
\put(38,15){\vector(1,0){5}}
\put(50,43){\vector(0,1){5}}
\put(50,21){\vector(0,1){5}}
\put(-50,48){\vector(0,-1){5}}
\put(-50,26){\vector(0,-1){5}}
\put(13.5,44){\vector(-3,-2){1}}
\put(-16.5,46){\vector(-3,2){1}}
\put(-13.5,26){\vector(3,2){1}}
\put(16.5,24){\vector(3,-2){1}}
\end{picture}
\end{center}
\caption{\label{fig-crit-jumps} The jump contours $\Sigma^\text{crit}$ for ${\bf S}_n^\text{crit}$ along with their orientations. The real axis is the dotted horizontal line.}
\end{figure}
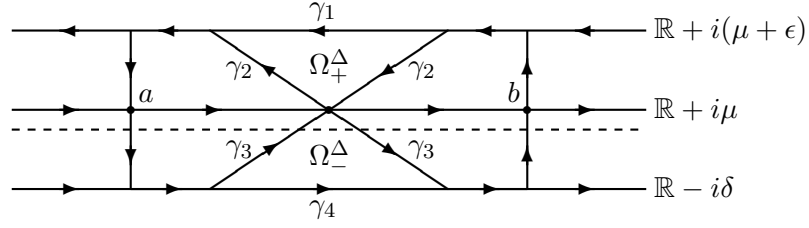

It is straightforward to check that the jump matrices for 
$\mathbf S_n^{\rm crit}(z)$ are the same as those for $  \mathbf S_n(z)$, 
see \eqref{st3a}, except on the contours $\ga_1$, $\ga_2$, $\ga_3$, and 
$\ga_4$.  The jump conditions are 
\begin{equation}
\mathbf S_{n+}^{\rm crit}(z) = \mathbf S_{n-}^{\rm crit}(z){\bf V}^{\rm crit}(z),
\end{equation}
where the orientations are given in Figure \ref{fig-crit-jumps} and 
\begin{equation}
\label{Vcrit-def}
{\bf V}^{\rm crit}(z)= \left\{
\begin{aligned}
&\begin{bmatrix} (1-e^{2\pi i (nz-\tau)})^{-1} & 0 \\ -e^{-nG(z)} & 1-e^{2\pi i (nz-\tau)} \end{bmatrix}, \quad &z\in \ga_1, \\
&\begin{bmatrix} 1 &  -e^{ nG(z)}e^{2\pi i(nz-\tau)} \\ 0 & 1 \end{bmatrix}, \quad &z\in \ga_2, \\
&\begin{bmatrix} 1 &  e^{ -nG(z)}e^{-2\pi i(nz-\tau)} \\ 0 & 1 \end{bmatrix}, \quad &z\in \ga_3, \\
&\begin{bmatrix} (1-e^{-2\pi i (nz-\tau)})^{-1} & 0 \\ e^{nG(z)} & 1-e^{-2\pi i (nz-\tau)} \end{bmatrix}, \quad &z\in \ga_4, \\
&{\bf V}^{({\bf S})}(z), \quad & \text{otherwise}.
\end{aligned}
\right.
\end{equation}

%\begin{figure}[h]
%\begin{center}
%\includegraphics[height=2.1in]{sig-chart-mu-crit-T-crit}
%\caption{Signature chart for $\Re\phi(z)$ in the complex $z$-plane (defined in \eqref{phi-def}) with $T=\pi^2$ and $\mu=0$.  The band $[a,b]$ is shown by a thick black line.}
%\label{sig-chart-T-crit}
%\end{center}
%\end{figure}
%
%The controlling phase function is 
%\eq
%\label{phi-def}
%\phi(z):=2g(z)-V(z)-\ell - 2\pi i z.
%\endeq

We now show that, as $n\to\infty$, the jump matrices for 
${\bf S}_n^\text{crit}(z)$ decay exponentially to 
the identity matrix except in a neighborhood of the band $(a,b)$.  Let 
$\mathbb{D}_a$, $\mathbb{D}_b$, and $\mathbb{D}_{i\mu}$ be small fixed 
circular neighborhoods centered at $a$, $b$, and $i\mu$, respectively, small 
enough so their closures do not intersect $\mathbb{R}+i(\mu+\epsilon)$, 
$\mathbb{R}-i\delta$, or each other.  

\begin{lem}
\label{lemma:Scrit-jumps}
Fix a non-negative integer $k$, choose $\mu$ according to 
\eqref{mu-range},  and suppose $(\pi^2-T)n^{2/3}$ remains bounded as 
$n\to\infty$. Then there exists a constant 
$c>0$ such that
\eq
{\bf V}^{\rm crit}(z) = \mathbb{I} + \mathcal{O}(e^{-cn}) \text{ as } n\to\infty \text{ for } z\in\Sigma^{\rm crit}\cap(\mathbb{D}_a\cup\mathbb{D}_b\cup\mathbb{D}_{i\mu})^\mathsf{C}
\endeq
(here $\mathsf{C}$ denotes the complement).  
\end{lem}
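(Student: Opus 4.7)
The proof is a contour-by-contour verification that each entry of $\mathbf{V}^{\rm crit}(z)-\mathbb{I}$ is exponentially small in $n$ outside $\mathbb{D}_a\cup\mathbb{D}_b\cup\mathbb{D}_{i\mu}$. (The constant jump on the band $(a,b)$ is not of the small-norm form; it will be absorbed into the outer parametrix in \S\ref{sec-parametrices}.) The jumps inherited from $\mathbf{V}^{(\mathbf S)}$---those on $\mathbb{R}+i(\mu+\epsilon)$, $\mathbb{R}-i\delta$, $(\mathbb{R}+i\mu)\setminus(a,b)$, and the short vertical lens sides rising from $a$ and $b$---were treated in \cite{BuckinghamL:2017} in the subcritical regime. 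Because the equilibrium condition \eqref{eq:variational_condition} and the signs of $\Re G$ on the lens boundaries are stable under the perturbations $T-\pi^2=\bigO(n^{-2/3})$ and $\mu=\bigO((\log n)/n)$, those bounds carry over: on $(\mathbb{R}+i\mu)\setminus(a,b)$ the entry $e^{n(g_++g_--V-\ell)}$ is $\bigO(e^{-cn})$ by \eqref{eq:variational_condition}; on the vertical lens sides the $e^{\mp nG}$ entries decay because $\Re G$ has a definite sign there (bounded away from zero off $\mathbb{D}_a,\mathbb{D}_b$); and on the outer horizontal contours those same bounds combine with $|e^{\mp 2\pi i(nz-\tau)}|\le e^{-2\pi n\min(\mu+\epsilon,\delta)}$.

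The essentially new analysis is on $\gamma_1,\ldots,\gamma_4$. On $\gamma_1\subset\mathbb{R}+i(\mu+\epsilon)$ and $\gamma_4\subset\mathbb{R}-i\delta$ the factor $e^{\pm 2\pi i(nz-\tau)}$ is already $\bigO(e^{-cn})$, and the $e^{\mp nG}$ entries are controlled as above. The substantive cases are $\gamma_2,\gamma_3$, where the off-diagonal entries are $\mp e^{n(G(z)\pm 2\pi iz)\mp 2\pi i\tau}$, so the task reduces to showing
\begin{equation*}
\Re(G(z)+2\pi iz)\le -c<0 \text{ on } \gamma_2\setminus\mathbb{D}_{i\mu}, \quad \Re(G(z)+2\pi iz)\ge c>0 \text{ on } \gamma_3\setminus\mathbb{D}_{i\mu}.
\end{equation*}
By \eqref{int-def-G}, $(G(z)+2\pi iz)'=2\pi i(1-\rho(z))$. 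Expanding
\begin{equation*}
\rho(w)=\frac{\sqrt{T}}{\pi}\left(1-\frac{T}{8}(w-i\mu)^2+\bigO((w-i\mu)^4)\right)
\end{equation*}
shows that $1-\rho(i\mu)=1-\sqrt{T}/\pi=\bigO(n^{-2/3})$ and $\rho''(i\mu)=-T^{3/2}/(4\pi)\ne 0$, so $G(z)+2\pi iz$ has an approximate cubic saddle at $i\mu$ (with true saddle at distance $\bigO(n^{-1/3})$). A direct computation yields $\Re(G(i\mu)+2\pi i(i\mu))=-2\pi\mu\to 0^-$. The contours $\gamma_2,\gamma_3$ are drawn so that their tangents at $i\mu$ lie in steepest-descent and steepest-ascent sectors of this saddle, respectively; a local Taylor estimate then gives $|\Re(G(z)+2\pi iz)+2\pi\mu|\ge c|z-i\mu|^3$ along each, so a fixed-radius $\mathbb{D}_{i\mu}$ (with radius chosen large enough that $c|z-i\mu|^3$ dominates $2\pi\mu$ on its boundary---any fixed positive radius works for large $n$ since $\mu\to 0$) gives the required strict sign on $\partial\mathbb{D}_{i\mu}$, and compactness handles the remainder of each contour.

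The main obstacle is the geometric bookkeeping near the saddle: one must verify that the six cubic sectors of definite sign for $\Re(G(z)+2\pi iz)$ near $i\mu$ remain robust (with uniform opening angle) as $T$ ranges over the $k$-tacnode window, and that $\mathbb{D}_{i\mu}$ can be chosen disjoint from $\mathbb{D}_a,\mathbb{D}_b$ while still absorbing the $\bigO(\mu)$ shift of $\Re(G+2\pi iz)$ at the saddle. Once these open conditions are in hand, $\gamma_2,\gamma_3$ can be fixed independently of $n$ and the exponential decay follows throughout $\Sigma^{\rm crit}\setminus(\mathbb{D}_a\cup\mathbb{D}_b\cup\mathbb{D}_{i\mu})$.
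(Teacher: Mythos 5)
Your proposal is correct and follows essentially the same route as the paper: both reduce the lemma to the same list of sign conditions on $\Re G(z)$, $\Re(G(z)+2\pi iz)$, and $\Re(g_+(z)+g_-(z)-V(z)-\ell)$ on the various contour pieces, and both settle them by combining the known (sub)critical analysis with stability of these strict inequalities under the perturbations $T-\pi^2=\bigO(n^{-2/3})$, $\mu=\bigO((\log n)/n)$. The only difference is cosmetic: where the paper simply cites the $\mu=0$ analysis of \cite{Liechty:2012} and invokes continuity in $\mu$, you carry out the local cubic-saddle computation for $\Re(G+2\pi iz)$ near $i\mu$ explicitly, which is a correct (and slightly more self-contained) verification of the same facts.
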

\begin{proof}
Throughout the proof we assume 
$z\notin\mathbb{D}_a\cup\mathbb{D}_b\cup\mathbb{D}_{i\mu}$ in order to avoid 
rewriting this condition.  From \eqref{Vcrit-def}, it is enough to prove the 
following:
\begin{itemize}
\item $\Re G(z)>0$ for $z\in (a+i\epsilon,b+i\epsilon)\cup(a,a+i\epsilon)\cup(b,b+i\epsilon)$,
\item $\Re G(z)<0$ for $z\in (a-i(\mu+\delta),b-i(\mu+\delta))\cup(a,a-i(\mu+\delta))\cup(b,b-i(\mu+\delta))$,
\item $\Re(G(z)+2\pi iz)<0$ for $z\in\gamma_2 \cup [(\mathbb{R}+i(\mu+\epsilon))\backslash\gamma_1]$,
\item $\Re(G(z)+2\pi iz)>0$ for $z\in\gamma_3 \cup [(\mathbb{R}-i\delta)\backslash\gamma_4]$,
\item $\Re(g_+(z)+g_-(z)-V(z)-\ell)<0$ for $z\in(\mathbb{R}+i\mu)\backslash(a,b).$
\end{itemize}
Each of these conditions follow from the properties \eqref{log-def-g}--\eqref{eq:variational_condition}, and was shown in the tacnode scaling regime with $\mu=0$ 
in \cite{Liechty:2012}.  (In fact, the arguments used are the same as in the 
case when $T\in(0,\pi^2)$ ;  the only significant change if $T\approx\pi^2$ 
occurs near $z=i\mu$.)  All of the quantities involved ($G(z;T\mu)$, 
$g(z;T,\mu)$, $V(z;T,\mu)$, $\ell(T,\mu)$, $a(T,\mu)$, $b(T,\mu)$) are 
continuous as functions of $\mu$.  Therefore, if $\mu$ is given by 
\eqref{mu-range}, then these conditions must hold as long as $n$ is 
sufficiently large.  
\end{proof}

\section{Initial construction of parametrices}
\label{sec-parametrices}

We build an approximation, or model, of ${\bf S}^{\rm crit}_n(z)$ in four 
pieces.  Inside the disks $\mathbb{D}_{i\mu}$, $\mathbb{D}_a$, and 
$\mathbb{D}_b$, we construct the inner model solutions or parametrices 
${\bf M}_k^{(i\mu)}(z)$, ${\bf M}_k^{(a)}(z)$, and ${\bf M}_k^{(b)}(z)$ to 
satisfy exactly the same jumps as ${\bf S}^{\rm crit}_n(z)$.  These 
constructions each involve a local conformal change of variables.  When 
necessary we assume without further comment the jump contours for 
${\bf S}^{\rm crit}_n(z)$ have been locally deformed in order to map exactly 
onto the parametrix jump contours.  Outside of the three disks 
${\bf S}^{\rm crit}_n(z)$ is approximated by the outer model solution 
${\bf M}_k^{(\text{out})}(z)$.  It is necessary to closely match the inner 
model solutions to the outer model solution on the disk boundaries.  It 
turns out that the match between ${\bf M}_k^{(\text{out})}(z)$ and 
${\bf M}_k^{(i\mu)}(z)$ is not uniform in $\mu$, and in fact there are certain 
values of $\mu$ for which the matching is insufficient to give controlled 
error bounds.  This issue will be taken care of in \S\ref{sec-error} via the 
construction of a \emph{parametrix for the error}.  

\subsection{The outer model problem near criticality}

It turns out that if we use the same outer model problem used in 
\cite{LiechtyW:2016} in the tacnode case, the inner and outer model solutions 
do not match well on $\partial\mathbb{D}_{i\mu}$.  This difficulty can be 
circumvented by requiring the outer model problem to have a pole of a 
specified order at $i\mu$.  This technique has previously been used to 
analyze the emergence of a spectral cut in unitarily invariant random matrix 
ensembles \cite{BertolaL:2009}, a smooth-to-oscillatory transition for the 
semiclassical Korteweg-de Vries equation \cite{ClaeysG:2010}, a 
librational-to-rotational transition for the semiclassical sine-Gordon 
equation \cite{BuckinghamM:2012}, and at the edge of the pole region for 
rational solutions for the Painlev\'e-II equation \cite{BuckinghamM:2015}.  
With this motivation, we therefore pose the outer model problem as follows.
\begin{rhp}[Outer model problem near criticality]
\label{rhp-outer-crit}
Fix $k\in\mathbb{Z}$ and determine a $2\times 2$ 
matrix-valued function ${\bf M}_k^{(\rm{out})}(z)$ satisfying:
\begin{itemize}
\item[]{\bf Analyticity:} ${\bf M}_k^{(\rm{out})}(z)$ is analytic in $z$ 
off $[a,b]$ and is H\"older continuous up to $(a,b)$ except in 
$\mathbb{D}_{i\mu}$ with at most quarter-root singularities at $a$ and $b$.  
Furthermore, the function 
\eq
\widetilde{{\bf M}}_k^{(\rm{out})}(z):=\begin{cases} {\bf M}_k^{(\rm{out})}(z)(z-i\mu)^{k\sigma_3}, & z\in\mathbb{D}_{i\mu}\cap\{\Im(z)>\mu\}, \\  {\bf M}_k^{(\rm{out})}(z)(z-i\mu)^{-k\sigma_3}, & z\in\mathbb{D}_{i\mu}\cap\{\Im(z)<\mu\}\end{cases}
\endeq
is analytic in its domain of definition.
\item[]{\bf Normalization:}  
\eq
\lim_{z\to\infty}{\bf M}_k^{(\rm{out})}(z) = \mathbb{I}.
\endeq
\item[]{\bf Jump condition:}  Orienting $[a,b]$ left-to-right, the solution 
satisfies 
\eq
{\bf M}_{k+}^{(\rm{out})}(z)={\bf M}_{k-}^{(\rm{out})}(z)\bbm 0 & 1 \\ -1 & 0 \ebm, \quad z\in[a,b].
\endeq
\end{itemize}
\end{rhp}
To solve the outer model problem we begin by defining $R(z)$ to be the function 
satisfying 
\eq
R(z)^2 = (z-a)(z-b)
\endeq
with branch cut $[a,b]$ and asymptotics $R(z)=z+\mathcal{O}(1)$ as $z\to\infty$.  
Next, define the function
\eq
\label{d-def}
d(z):=\frac{R(z)+\frac{2i}{\sqrt{T}}}{z-i\mu}.
\endeq
The following lemma records some properties of $d(z)$ that are easily checked directly.
\begin{lem}
\label{d-properties-lem}
\begin{itemize}
\item[(a)] $d(z)$ is analytic off $[a,b]$.
\item[(b)] $d_+(z)d_-(z)=-1$ for $z\in(a,b)$, where $(a,b)$ is oriented left-to-right.
\item[(c)] $\displaystyle d(z) = \frac{4i}{\sqrt{T}}\frac{1}{z-i\mu} + \mathcal{O}(z-i\mu)$ for $z\in\mathbb{D}_{i\mu}\cap\{\Im z>0\}$.
\item[(d)] $\displaystyle d(z) = \frac{\sqrt{T}}{4i}(z-i\mu) + \mathcal{O}((z-i\mu)^3)$ for $z\in\mathbb{D}_{i\mu}\cap\{\Im z<0\}$.
\item[(e)] $d(z)=-i+\mathcal{O}(\sqrt{a-z})$ for $z\in\mathbb{D}_a$.
\item[(f)] $d(z)=i+\mathcal{O}(\sqrt{z-b})$ for $z\in\mathbb{D}_b$.
\item[(g)] $\displaystyle d(z)=1+\frac{2i}{\sqrt{T}z} + \mathcal{O}\left(\frac{1}{z^2}\right)$ as $z\to\infty$.
\end{itemize}
\end{lem}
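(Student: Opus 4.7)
The plan is to derive each part directly from the definition $d(z) = (R(z) + 2i/\sqrt{T})/(z-i\mu)$ together with the standard properties of $R(z)^2 = (z-a)(z-b)$ (branch cut on $[a,b]$, normalized by $R(z) = z + \mathcal{O}(1)$ at infinity, which gives the exact identity $R(z)^2 = (z-i\mu)^2 - 4/T$ since $a + b = 2i\mu$ and $ab = (i\mu)^2 - 4/T$). Property (a) is immediate: $R$ is analytic off $[a,b]$, and since $i\mu$ lies in the interior of $[a,b]$, the would-be pole of $1/(z-i\mu)$ is already excluded from the domain $\mathbb{C}\setminus[a,b]$, so $d$ is analytic there. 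For (b), I would use the jump identities $R_+ = -R_-$ and $R_+ R_- = -R_+^2 = -(z-i\mu)^2 + 4/T$ on the cut to compute
\[
(R_+(z) + 2i/\sqrt{T})(R_-(z) + 2i/\sqrt{T}) = R_+ R_- + (2i/\sqrt{T})(R_+ + R_-) + (2i/\sqrt{T})^2 = -(z-i\mu)^2,
\]
and divide by $(z-i\mu)^2$ to obtain $d_+d_- = -1$.

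For (c) and (d), set $\zeta = z - i\mu$. The exact identity $R(z)^2 = \zeta^2 - 4/T$ gives, near $\zeta = 0$,
\[
R(z) = \pm\frac{2i}{\sqrt{T}}\sqrt{1 - \frac{T\zeta^2}{4}} = \pm\frac{2i}{\sqrt{T}}\left(1 - \frac{T\zeta^2}{8} + \mathcal{O}(\zeta^4)\right),
\]
with the $+$ sign valid above the cut and the $-$ sign below, the choice fixed by analytic continuation of $R$ from infinity along a path in the relevant half-plane using $R(z) \sim z$. Above the cut, $R(z) + 2i/\sqrt{T} = 4i/\sqrt{T} + \mathcal{O}(\zeta^2)$, and dividing by $\zeta$ produces (c); below the cut the constant terms cancel, leaving $R(z) + 2i/\sqrt{T}$ of order $\zeta^2$, whose leading coefficient divided by $\zeta$ yields the linear behavior in (d). For (e) and (f), I would use the local factorization $R(z) = \sqrt{z-a}\sqrt{z-b}$ near an endpoint: near $a$, $R(z) = \sqrt{z-a}\bigl(\sqrt{a-b} + \mathcal{O}(z-a)\bigr) = \mathcal{O}(\sqrt{z-a})$, so $R(z) + 2i/\sqrt{T} = 2i/\sqrt{T} + \mathcal{O}(\sqrt{z-a})$; dividing by $z - i\mu = -2/\sqrt{T} + \mathcal{O}(z-a)$ gives $d(z) = -i + \mathcal{O}(\sqrt{a-z})$, and (f) is completely analogous.

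Property (g) follows from the large-$z$ expansion
\[
R(z) = z\sqrt{1 - (a+b)/z + ab/z^2} = z - \frac{a+b}{2} + \mathcal{O}(1/z) = z - i\mu + \mathcal{O}(1/z)
\]
(using $a+b = 2i\mu$), so that $d(z) = 1 + (2i/\sqrt{T})/(z-i\mu) + \mathcal{O}(1/z^2)$; expanding $1/(z-i\mu) = 1/z + \mathcal{O}(1/z^2)$ then produces the stated form. The one substantive point in the argument is correctly pinning down the sign of $R_\pm$ at $i\mu$ used in (c) and (d), which must be fixed by the continuity-from-infinity argument; apart from this, every step is a routine power-series computation from the factored form $R(z)^2 = \zeta^2 - 4/T$.
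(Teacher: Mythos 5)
Your verification is correct and is exactly the direct check the paper intends (the paper offers no proof beyond the remark that the properties are "easily checked directly"), and your sign determination for $R$ at $i\mu$ via continuation from infinity along the imaginary axis is the right way to pin down (c) and (d). One caution on the single step you leave implicit: carrying the computation in (d) to the end gives $R(z)+2i/\sqrt{T}=\tfrac{i\sqrt{T}}{4}(z-i\mu)^2+\mathcal{O}((z-i\mu)^4)$ below the cut, hence $d(z)=\tfrac{i\sqrt{T}}{4}(z-i\mu)+\mathcal{O}((z-i\mu)^3)$, whose leading coefficient is $-\sqrt{T}/(4i)$, i.e.\ the \emph{negative} of the coefficient printed in part (d); this sign is forced by part (b), since the leading coefficients in (c) and (d) must multiply to $-1$ and indeed $\tfrac{4i}{\sqrt{T}}\cdot\tfrac{i\sqrt{T}}{4}=-1$, so the printed $\sqrt{T}/(4i)$ is a typo and you should write the coefficient out explicitly rather than assert that it "yields the linear behavior in (d)."
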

Also define
\eq
\label{gamma-def}
\gamma(z):=\left(\frac{z-a}{z-b}\right)^{1/4}
\endeq
with branch cut $[a,b]$ and $\lim_{z\to\infty}\gamma(z)=1$.  
Now Riemann--Hilbert Problem \ref{rhp-outer-crit} is solved by 
\eq
\label{M-k-out}
{\bf M}_k^{(\text{out})}(z):=e^{-ik\pi\sigma_3/2} \bbm \displaystyle \frac{\gamma(z)+\gamma(z)^{-1}}{2} & \displaystyle \frac{\gamma(z)-\gamma(z)^{-1}}{-2i} \\ \displaystyle \frac{\gamma(z)-\gamma(z)^{-1}}{2i} & \displaystyle \frac{\gamma(z)+\gamma(z)^{-1}}{2} \ebm e^{ik\pi\sigma_3/2}d(z)^{k\sigma_3}.
\endeq

\subsection{The inner model problem near $z=i\mu$}

We now construct the function ${\bf M}_k^{(i\mu)}$ that satisfies the same 
jumps as ${\bf S}^{\rm crit}(z)$ for $z\in\mathbb{D}_{i\mu}$ and approximately matches 
${\bf M}_k^{(\text{out})}(z)$ for $z\in\partial\mathbb{D}_{i\mu}$.  It is 
convenient to work in a local variable $\zeta(z)$ in which the jump conditions 
take a particularly nice form.  For $z\in\mathbb{D}_{i\mu}$, the jump exponent 
has the expansion
\eq
\label{nG-expansion}
\begin{split}
n(G(z)&+2\pi iz) - 2\pi i\tau \\
& = (ni\pi-2\pi\mu n - 2\pi i\tau) + 2in(\pi-\sqrt{T})(z-i\mu) + \frac{inT^{3/2}}{12}(z-i\mu)^3 + \mathcal{O}((z-i\mu)^4).
\end{split}
\endeq
Note that at criticality (i.e. $T=\pi^2$ and $\mu=0$) the coefficient of 
$z-i\mu$ vanishes while that of $(z-i\mu)^3$ does not.  It is therefore 
reasonable to expect that the exponent can be modeled by a cubic polynomial.  
Indeed, following Chester, Friedman, and Ursell \cite{ChesterFU:1957} (see also 
\cite{BuckinghamM:2012,Liechty:2012}), there is, for $z$ sufficiently close to 
$i\mu$ and $T$ and $\mu$ sufficiently close to criticality, an invertible 
conformal mapping $\zeta(z)=\zeta(z;\mu,T)$ as well as analytic functions 
$s(\mu,T)$ and $\theta(\mu)$ such that $\zeta(i\mu) = 0$ and
\eq
\label{zeta-conf-map}
n(G(z)+2\pi iz) - 2\pi i\tau  = 2i\left(\frac{4}{3}\zeta(z)^3+s\zeta(z)-\theta\right).
\endeq
If necessary, we shrink the size of $\mathbb{D}_{i\mu}$ to ensure these conditions 
hold for all $z\in\mathbb{D}_{i\mu}$.  By plugging the expansion \eqref{nG-expansion} into \eqref{zeta-conf-map} and matching constant terms we find that 
\eq\label{theta-explicit}
\theta(\mu) = -\frac{n\pi}{2} - in\pi\mu + \pi\tau.
\endeq
The change of variables \eqref{zeta-conf-map} is nearly identical to the one presented in \cite[Section 4.9]{Liechty:2012}, up to a shift by $i\mu$ in the definition of $G(z)$. The parameters $s$ and $\theta$ are defined in terms of the stationary points of the left-hand-side of \eqref{zeta-conf-map}, and the analysis presented in \cite[Section 4.9]{Liechty:2012} applies to \eqref{zeta-conf-map} as well. The result is the formula \eqref{def-s} for $s$, compare \cite[Equation (1.43)]{Liechty:2012}. For a similar application of the Chester--Friedman--Ursell change of variables with more details given, see also \cite[Section 4.3]{BuckinghamM:2012}.

We note that
\eq
\label{zeta-growth-in-n}
\zeta(z) = \mathcal{O}(n^{1/3}) \text{ for } z\in\mathbb{D}_{i\mu}
\endeq
as well as the facts that 
\eq
\frac{\zeta(z)}{z-i\mu}=\mathcal{O}(1) \text{ and } \frac{z-i\mu}{\zeta(z)}=\mathcal{O}(1) \text{ for } z\in\mathbb{D}_{i\mu}.
\endeq
More precisely, inserting 
$\zeta(z)=\zeta'(i\mu)(z-i\mu)+\mathcal{O}\left((z-i\mu)^2\right)$ into 
\eqref{nG-expansion} and \eqref{zeta-conf-map}, and then using 
\eqref{T-scaling}, shows 
\eq
\label{zeta-prime}
\zeta'(i\mu) = \frac{n(\pi-\sqrt{T})}{s} = \frac{\pi n^{1/3}}{2^{5/3}} + \mathcal{O}\left(\frac{1}{n^{2/3}}\right).
\endeq

We are now ready to pose the inner model problem.
\begin{rhp}[The inner model problem in $\mathbb{D}_{i\mu}$ near criticality]
\label{rhp-inner-crit}
Fix $s\in\mathbb{R}$ and $k\in\mathbb{Z}$.  Determine 
a $2\times 2$ matrix-valued function ${\bf M}_k^{(i\mu)}(\zeta(z))$ satisfying:
\begin{itemize}
\item[]{\bf Analyticity:} ${\bf M}_k^{(i\mu)}(\zeta)$ is analytic for 
$\zeta\in\mathbb{D}_{i\mu}$ off the six rays $\arg(\zeta)\in\{0,\pm\frac{\pi}{6}$, 
$\pm\frac{5\pi}{6},\pi\}$.  In each sector the solution can be analytically 
continued into a larger sector, and is H\"older continuous up to the 
boundary in a neighborhood of $\zeta=0$.
\item[]{\bf Normalization:}  
\eq
{\bf M}_k^{(i\mu)}(\zeta(z)) = (\mathbb{I}+o(1)){\bf M}_k^{(\rm{out})}(z) \text{ as }n\to\infty\text{ for }z\in\partial\mathbb{D}_{i\mu}.
\endeq
\item[]{\bf Jump condition:}  The solution satisfies 
${\bf M}_{k+}^{(i\mu)}(\zeta)={\bf M}_{k-}^{(i\mu)}(\zeta){\bf V}^{(i\mu)}(\zeta)$, 
with jumps as shown in Figure \ref{fig-inner-jumps}.
\begin{figure}[h]
\setlength{\unitlength}{1.5pt}
\begin{center}
\begin{picture}(100,100)(-50,-50)
\thicklines
\put(0,0){\line(3,2){40}}
\put(0,0){\line(-3,2){40}}
\put(0,0){\line(-3,-2){40}}
\put(0,0){\line(3,-2){40}}
\put(-40,0){\line(1,0){80}}
\put(0,0){\vector(3,2){22}}
\put(0,0){\vector(-3,2){22}}
\put(0,0){\vector(-3,-2){22}}
\put(0,0){\vector(3,-2){22}}
\put(-40,0){\vector(1,0){20}}
\put(0,0){\vector(1,0){26}}
\put(0,0){\circle*{2}}
\put(-2,4){$0$}
\put(42,25){$\bbm 1 & e^{2i(\frac{4}{3}\zeta^3+s\zeta-\theta)} \\ 0 & 1 \ebm$}
\put(-105,25){$\bbm 1 & -e^{2i(\frac{4}{3}\zeta^3+s\zeta-\theta)} \\ 0 & 1 \ebm$}
\put(-109,-28){$\bbm 1 & -e^{-2i(\frac{4}{3}\zeta^3+s\zeta-\theta)} \\ 0 & 1 \ebm$}
\put(42,-28){$\bbm 1 & e^{-2i(\frac{4}{3}\zeta^3+s\zeta-\theta)} \\ 0 & 1 \ebm$}
\put(42,0){$\bbm 0 & 1 \\ -1 & 0 \ebm$}
\put(-69,0){$\bbm 0 & 1 \\ -1 & 0 \ebm$}
\end{picture}
\end{center}
\caption{\label{fig-inner-jumps} The jump contours $\Sigma^{(i\mu)}$ and jump 
matrices ${\bf V}^{(i\mu)}(\zeta)$.}
\end{figure}
\end{itemize}
\end{rhp}

We now perform a series of changes of variables 
\begin{equation*}
{\bf M}_k^{(i\mu)} \to {\bf Z}_k^{(1)} \to {\bf Z}_k^{(2)} \to {\bf Z}_k
\end{equation*}
to transform the Riemann--Hilbert 
problem for ${\bf M}_k^{(i\mu)}$ into a Riemann--Hilbert problem associated 
with the Painlev\'e-II equation.  Note that
\eq
\label{E-k-def}
{\bf E}_k(z):= \begin{cases} {\bf M}_k^{(\text{out})}(z)\bbm 0 & -1 \\ 1 & 0 \ebm \left(\displaystyle\frac{n^{1/3}}{\zeta(z)}\right)^{k\sigma_3}, & z\in\mathbb{D}_{i\mu}\cap\{\Im z>\mu\}, \\ {\bf M}_k^{(\text{out})}(z) \left(\displaystyle\frac{n^{1/3}}{\zeta(z)}\right)^{k\sigma_3}, & z\in\mathbb{D}_{i\mu}\cap\{\Im z<\mu\}  \end{cases}
\endeq
is analytic and invertible for $z\in\mathbb{D}_{i\mu}$.  Given the required 
normalization for ${\bf M}_k^{(i\mu)}(\zeta(z))$, we have 
\eq
{\bf M}_k^{(i\mu)}(\zeta(z)) = \begin{cases} (\mathbb{I}+o(1)){\bf E}_k(z)\left(\frac{\zeta(z)}{n^{1/3}}\right)^{k\sigma_3}\bbm 0 & 1 \\ -1 & 0 \ebm, & z\in\partial\mathbb{D}_{i\mu}\cap\{\Im z>\mu\}, \\ (\mathbb{I}+o(1)){\bf E}_k(z)\left(\frac{\zeta(z)}{n^{1/3}}\right)^{k\sigma_3}, & z\in\partial\mathbb{D}_{i\mu}\cap\{\Im z<\mu\}. \end{cases}
\endeq
We now pull out this analytic factor from ${\bf M}_k^{(i\mu)}$:
\eq
\label{Zk1-def}
{\bf Z}_k^{(1)}(\zeta(z)):={\bf E}_k(z)^{-1}{\bf M}_k^{(i\mu)}(\zeta(z)) \text{ for }z\in\mathbb{D}_{i\mu}.
\endeq
Now ${\bf Z}_k^{(1)}$ has the same jumps as ${\bf M}_k^{(i\mu)}$, but the 
normalization changes to 
\eq
{\bf Z}_k^{(1)}(\zeta(z)) = \begin{cases} (\mathbb{I}+o(1))\left(\frac{\zeta(z)}{n^{1/3}}\right)^{k\sigma_3}\bbm 0 & 1 \\ -1 & 0 \ebm, & z\in\partial\mathbb{D}_{i\mu}\cap\{\Im z>\mu\}, \\ (\mathbb{I}+o(1))\left(\frac{\zeta(z)}{n^{1/3}}\right)^{k\sigma_3}, & z\in\partial\mathbb{D}_{i\mu}\cap\{\Im z<\mu\}. \end{cases}
\endeq
The next transformation removes the 
jump on the real axis and switches the triangularity of the jump matrices in 
the upper half-plane.
\eq
\label{Zk2-def}
{\bf Z}_k^{(2)}(\zeta):=\begin{cases} \bbm 1 & 0 \\ 0 & -1 \ebm {\bf Z}_k^{(1)}(\zeta)\bbm 0 & 1 \\ 1 & 0 \ebm, \quad \Im\zeta>0, \vspace{.05in} \\ \bbm 1 & 0 \\ 0 & -1 \ebm {\bf Z}_k^{(1)}(\zeta)\bbm 1 & 0 \\ 0 & -1 \ebm, \quad \Im\zeta<0.\end{cases}
\endeq
The jumps for ${\bf Z}_k^{(2)}(\zeta)$ are shown in Figure 
\ref{fig-Z2-jumps}.  
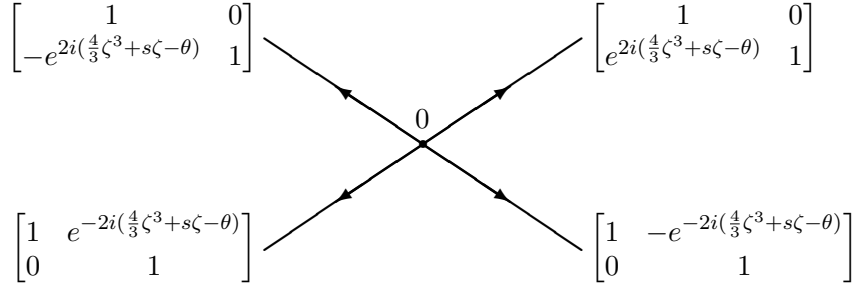
\begin{figure}[h]
\setlength{\unitlength}{1.5pt}
\begin{center}
\begin{picture}(100,100)(-50,-50)
\thicklines
\put(0,0){\line(3,2){40}}
\put(0,0){\line(-3,2){40}}
\put(0,0){\line(-3,-2){40}}
\put(0,0){\line(3,-2){40}}
\put(0,0){\vector(3,2){22}}
\put(0,0){\vector(-3,2){22}}
\put(0,0){\vector(-3,-2){22}}
\put(0,0){\vector(3,-2){22}}
\put(0,0){\circle*{2}}
\put(-2,4){$0$}
\put(42,25){$\bbm 1 & 0 \\ e^{2i(\frac{4}{3}\zeta^3+s\zeta-\theta)} & 1 \ebm$}
\put(-105,25){$\bbm 1 & 0 \\ -e^{2i(\frac{4}{3}\zeta^3+s\zeta-\theta)} & 1 \ebm$}
\put(-104,-28){$\bbm 1 & e^{-2i(\frac{4}{3}\zeta^3+s\zeta-\theta)} \\ 0 & 1 \ebm$}
\put(42,-28){$\bbm 1 & -e^{-2i(\frac{4}{3}\zeta^3+s\zeta-\theta)} \\ 0 & 1 \ebm$}
\end{picture}
\end{center}
\caption{\label{fig-Z2-jumps}  The jump contours and matrices for  
${\bf Z}_k^{(2)}(\zeta)$.}
\end{figure}
This also has the effect of simplifying the normalization:
\eq
{\bf Z}_k^{(2)}(\zeta) = (\mathbb{I}+o(1))\left(\frac{\zeta}{n^{1/3}}\right)^{k\sigma_3} \text{ as }\zeta\to\infty.
\endeq
Here $o(1)$ refers to growth in $n$, but because $\zeta(z)$ grows like 
$n^{1/3}$, we could just as well think of it as referring to growth in 
$\zeta$.  This is advantageous as we want to pose a problem in the $\zeta$ 
plane with no reference to $z$ or $n$.  
The final transformation removes the dependence on $n$, $\mu$, and $\tau$ 
from the Riemann--Hilbert problem:
\eq
\label{Zk-def}
{\bf Z}_k(\zeta):=n^{k\sigma_3/3}e^{-i\theta\sigma_3}{\bf Z}_k^{(2)}(\zeta)e^{i\theta\sigma_3}.
\endeq
For future reference, we note that combining \eqref{Zk1-def}, \eqref{Zk2-def}, 
and \eqref{Zk-def} gives
\eq
\label{M-k-imu}
{\bf M}_k^{(i\mu)}(\zeta(z)) = \begin{cases} {\bf E}_k(z)\sigma_3 e^{i\theta\sigma_3}n^{-k\sigma_3/3}{\bf Z}_k(\zeta(z))e^{-i\theta\sigma_3} \sigma_1, & z\in\mathbb{D}_{i\mu}\cap\{\Im z>\mu\}, \\  {\bf E}_k(z)\sigma_3 e^{i\theta\sigma_3}n^{-k\sigma_3/3}{\bf Z}_k(\zeta(z))e^{-i\theta\sigma_3} \sigma_3, & z\in\mathbb{D}_{i\mu}\cap\{\Im z<\mu\}. \end{cases}
\endeq
We state the Riemann--Hilbert problem for ${\bf Z}_k(\zeta)$ and relate its 
solution to the generalized Hastings--McLeod functions in the following 
subsection.

\subsection{The Jimbo--Miwa RHP for generalized Hastings--McLeod functions}

The function ${\bf Z}_k(\zeta)$ defined in \eqref{Zk-def} is the unique 
solution to the following Riemann--Hilbert problem.  
\begin{rhp}[Jimbo--Miwa problem for generalized Hastings--McLeod functions]
\label{rhp-jm}
Fix $s\in\mathbb{R}$ and $k\in\mathbb{Z}$ and determine a $2\times 2$ 
matrix-valued function ${\bf Z}_k(\zeta;s)$ satisfying:
\begin{itemize}
\item[]{\bf Analyticity:} ${\bf Z}_k(\zeta;s)$ is analytic in $\zeta$ 
off the four rays $\arg(\zeta)\in\{\pm\frac{\pi}{6}$, 
$\pm\frac{5\pi}{6}$\}.  In each sector the solution can be analytically 
continued into a larger sector, and is H\"older continuous up to the 
boundary in a neighborhood of $\zeta=0$.
\item[]{\bf Normalization:}  Uniformly with respect to $\arg(\zeta)$ 
in each sector of analyticity,
\eq
\lim_{\zeta\to\infty}{\bf Z}_k(\zeta;s)\zeta^{-k\sigma_3} = \mathbb{I}.
\endeq
\item[]{\bf Jump condition:}  Orienting the four jump rays towards 
infinity, the solution satisfies 
${\bf Z}_{k+}(\zeta;s)={\bf Z}_{k-}(\zeta;s){\bf V}^{({\bf Z})}(\zeta;s)$, as 
shown in Figure \ref{fig-jm-jumps}.
\begin{figure}[h]
\setlength{\unitlength}{1.5pt}
\begin{center}
\begin{picture}(100,100)(-50,-50)
\thicklines
\put(0,0){\line(3,2){40}}
\put(0,0){\line(-3,2){40}}
\put(0,0){\line(-3,-2){40}}
\put(0,0){\line(3,-2){40}}
\put(0,0){\vector(3,2){22}}
\put(0,0){\vector(-3,2){22}}
\put(0,0){\vector(-3,-2){22}}
\put(0,0){\vector(3,-2){22}}
\put(0,0){\circle*{2}}
\put(-2,4){$0$}
\put(42,25){$\bbm 1 & 0 \\ e^{2i(\frac{4}{3}\zeta^3+s\zeta)} & 1 \ebm$}
\put(-98,25){$\bbm 1 & 0 \\ -e^{2i(\frac{4}{3}\zeta^3+s\zeta)} & 1 \ebm$}
\put(-97,-28){$\bbm 1 & e^{-2i(\frac{4}{3}\zeta^3+s\zeta)} \\ 0 & 1 \ebm$}
\put(42,-28){$\bbm 1 & -e^{-2i(\frac{4}{3}\zeta^3+s\zeta)} \\ 0 & 1 \ebm$}
\end{picture}
\end{center}
\caption{\label{fig-jm-jumps}  The jump contours $\Sigma^{({\bf Z})}$ and jump 
matrices ${\bf V^{({\bf Z})}}(\zeta;s)$.}
\end{figure}
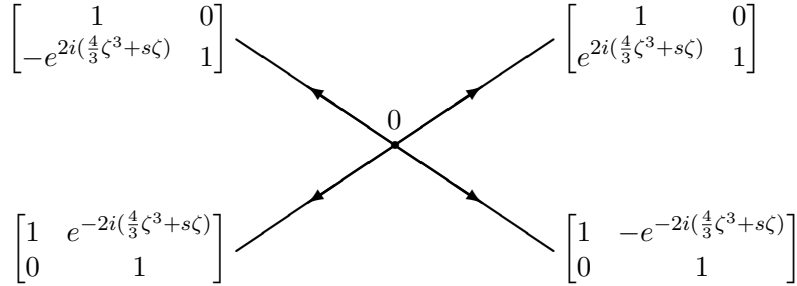
\end{itemize}
\end{rhp}

This is a special case of the Riemann--Hilbert problem posed in 
Fokas et al., Chapter 3, \S3.2 \cite{FokasIKN:2006}.  Specifically, 
${\bf Y}(\lambda):={\bf Z}_\nu(\lambda;x)\lambda^{-\nu\sigma_3}$ satisfies 
that Riemann--Hilbert problem with the specific choice of Stokes data 
$s_1=s_4=1$, $s_2=s_5=0$, and $s_3=s_6=-1$.  Fokas et al. show that the 
solution to this problem exists and is unique.  They furthermore 
(see \cite[Chapter 4, \S2.5]{FokasIKN:2006}) show that the function ${\bf L}_k(\zeta;s)$ defined as
\eq
\label{L-def}
{\bf L}_k(\zeta;s):={\bf Z}_k(\zeta;s)e^{-i(\frac{4}{3}\zeta^3+s\zeta)\sigma_3}
\endeq
satisfies the Jimbo--Miwa (or Jimbo--Miwa--Garnier) Lax pair \cite{JimboM:1981}
\eq
\label{Jimbo-Miwa-Lax-pair}
\begin{split}
\frac{\partial{\bf L}_k}{\partial s} & = \left( -i\sigma_3\zeta + i\bbm 0 & \mathcal{U}_k(s) \\ -\mathcal{V}_k(s) & 0 \ebm \right) {\bf L}_k, \\
\frac{\partial{\bf L}_k}{\partial\zeta} & = \left( -4i\sigma_3\zeta^2 + 4i \bbm 0 & \mathcal{U}_k(s) \\ -\mathcal{V}_k(s) & 0 \ebm \zeta + \bbm -2i\mathcal{U}_k(s)\mathcal{V}_k(s) - is & -2\mathcal{U}_k'(s) \\ -2\mathcal{V}_k'(s) & 2i\mathcal{U}_k(s)\mathcal{V}_k(s) + is \ebm \right) {\bf L}_k,
\end{split}
\endeq
and that this overdetermined system is equivalent to 
the coupled Painlev\'e-II system \eqref{coupled-PII}.  They also 
show that the logarithmic derivatives of $\mathcal{U}_k(s)$ and 
$\mathcal{V}_k(s)$ satisfy (uncoupled) Painlev\'e-II equations, a calculation 
we carry out below.  This implies that $\mathcal{U}_k(s)$ and 
$\mathcal{V}_k(s)$ are tau functions for certain Painlev\'e-II 
transcendents.  The remainder of the section is devoted to identifying these 
transcendents as certain generalized Hastings--McLeod functions.  We begin by 
expressing certain terms in the large-$\zeta$ expansion of ${\bf Z}_k(\zeta)$ 
in terms of $\mathcal{U}_k(s)$ and $\mathcal{V}_k(s)$.  With the help of this 
expansion, we then identify the Painlev\'e-II equations satisfied by the 
logarithmic derivatives of $\mathcal{U}_k(s)$ and $\mathcal{V}_k(s)$.  Next, 
we obtain the B\"acklund transformations relating $\mathcal{U}_{k+1}(s)$ and
$\mathcal{V}_{k+1}(s)$ to $\mathcal{U}_k(s)$.  Combining the B\"acklund 
transformations with the fact that $\mathcal{U}_0(s)$ and $\mathcal{V}_0(s)$ 
can be identified as Hastings--McLeod functions from the Riemann--Hilbert 
problem they satisfy allows us to identify the logarithmic derivatives of 
$\mathcal{U}_k(s)$ and $\mathcal{V}_k(s)$.

\subsubsection{Expansion of ${\bf Z}_k$}
We need the large-$\zeta$ expansion of ${\bf Z}_k(\zeta;s)$: 
\eq
\label{Z-expansion}
{\bf Z}_k(\zeta;s)\zeta^{-k\sigma_3} = \mathbb{I} + \frac{{\bf A}_k(s)}{\zeta} + \frac{{\bf B}_k(s)}{\zeta^2} + \mathcal{O}\left(\frac{1}{\zeta^3}\right).
\endeq
To compute this expansion, we begin by assuming ${\bf L}_k(\zeta;s)$ 
has the expansion 
\eq
\label{L-expansion}
\begin{split}
{\bf L}_k(\zeta) = & \left(\mathbb{I}+\frac{{\bf Y}_k^{(1)}}{\zeta}+\frac{{\bf Y}_k^{(2)}}{\zeta^2}+\frac{{\bf Y}_k^{(3)}}{\zeta^3}+\mathcal{O}\left(\frac{1}{\zeta^4}\right)\right) \\
  & \times \exp\left({\bf D}_k^{(3)}\zeta^3+{\bf D}_k^{(2)}\zeta^2+{\bf D}_k^{(1)}\zeta+{\bf D}_k^{(0)}\log\zeta+{\bf D}_k^{(-1)}\zeta^{-1}+\mathcal{O}\left(\frac{1}{\zeta^2}\right) \right),
\end{split}
\endeq
where ${\bf Y}_k^{(j)}$, $j=1,2,3$ are independent of $\zeta$ and off-diagonal, 
while ${\bf D}_k^{(j)}$, $j=-3,2,1,0,-1$ are independent of $\zeta$ and 
diagonal.  We insert this ansatz into the $\zeta$-derivative equation in the 
Lax pair \eqref{Jimbo-Miwa-Lax-pair}  and group diagonal and off-diagonal terms 
in each power of $\zeta$.
\eq
\begin{split}
\mathcal{O}(\zeta^2)\text{ diagonal}: & \quad 3{\bf D}_k^{(3)} = -4i\sigma_3,\\
\mathcal{O}(\zeta)\text{ diagonal}: & \quad  {\bf D}_k^{(2)}={\bf 0},\\
\mathcal{O}(\zeta)\text{ off-diagonal}: & \quad 3{\bf Y}_k^{(1)}{\bf D}_k^{(3)} = -4i\sigma_3{\bf Y}_k^{(1)} + 4i\bbm 0 & \mathcal{U}_k \\ -\mathcal{V}_k & 0 \ebm, \\
\mathcal{O}(1)\text{ diagonal}: & \quad {\bf D}_k^{(1)} = 4i\bbm 0 & \mathcal{U}_k \\ -\mathcal{V}_k & 0 \ebm{\bf Y}_k^{(1)} + \bbm -2i\mathcal{U}_k\mathcal{V}_k-is & 0 \\ 0 & 2i\mathcal{U}_k\mathcal{V}_k+is \ebm,\\
\mathcal{O}(1)\text{ off-diagonal}: & \quad 2{\bf Y}_k^{(1)}{\bf D}_k^{(2)} + 3{\bf Y}_k^{(2)}{\bf D}_k^{(3)} = -4i\sigma_3{\bf Y}_k^{(2)} + \bbm 0 & -2\mathcal{U}_k' \\ -2\mathcal{V}_k' & 0 \ebm, \\ 
\mathcal{O}(\zeta^{-1})\text{ diagonal}: & \quad {\bf D}_k^{(0)} = 4i\bbm 0 & \mathcal{U}_k \\ -\mathcal{V}_k & 0 \ebm{\bf Y}_k^{(2)} + \bbm 0 & -2\mathcal{U}_k' \\ -2\mathcal{V}_k' & 0 \ebm{\bf Y}_k^{(1)}, \\ 
\mathcal{O}(\zeta^{-1})\text{ off-diagonal}: & \quad {\bf Y}_k^{(1)}{\bf D}_k^{(1)} + 2{\bf Y}_k^{(2)}{\bf D}_k^{(2)} + 3{\bf Y}_k^{(3)}{\bf D}_k^{(3)} \\ 
  & \hspace{.5in}= -4i\sigma_3{\bf Y}_k^{(3)} - (2i\mathcal{U}_k\mathcal{V}_k+is)\sigma_3{\bf Y}_k^{(1)}, \\ 
\mathcal{O}(\zeta^{-2})\text{ diagonal}: & \quad -{\bf D}_k^{(-1)} = 4i\bbm 0 & \mathcal{U}_k \\ -\mathcal{V}_k & 0 \ebm{\bf Y}_k^{(3)} + \bbm 0 & -2\mathcal{U}_k' \\ -2\mathcal{V}_k' & 0 \ebm{\bf Y}_k^{(2)}.
\end{split}
\endeq
Solving these equations sequentially yields
\eq
\label{L-coefficients}
\begin{split}
{\bf D}_k^{(3)} = -\frac{4}{3}i\sigma_3, \quad {\bf D}_k^{(2)}={\bf 0}&, \quad  {\bf D}_k^{(1)} = -is\sigma_3, \\ {\bf D}_k^{(0)} = (\mathcal{U}_k\mathcal{V}_k'-\mathcal{V}_k\mathcal{U}_k')\sigma_3, \quad {\bf D}_k^{(-1)} = &\frac{i}{2}(\mathcal{U}_k^2\mathcal{V}_k^2+s\mathcal{U}_k\mathcal{V}_k + \mathcal{U}_k'\mathcal{V}_k')\sigma_3, \\ 
{\bf Y}_k^{(1)} = \bbm 0 & \frac{1}{2}\mathcal{U}_k \\ \frac{1}{2}\mathcal{V}_k & 0 \ebm, \quad {\bf Y}_k^{(2)} = \bbm 0 & \frac{i}{4}\mathcal{U}_k' \\ -\frac{i}{4}\mathcal{V}_k' & 0 \ebm, & \quad {\bf Y}_k^{(3)} = -\frac{1}{8} \bbm 0 & \mathcal{U}_k^2\mathcal{V}_k+s\mathcal{U}_k \\ \mathcal{U}_k\mathcal{V}_k^2+s\mathcal{V}_k & 0 \ebm.
\end{split}
\endeq
Combining \eqref{L-def} and \eqref{Z-expansion} gives 
\eq
{\bf L}_k(\zeta;s)e^{i(\frac{4}{3}\zeta^3+s\zeta)\sigma_3}\zeta^{-k\sigma_3} = \mathbb{I} + \frac{{\bf A}_k(s)}{\zeta} + \frac{{\bf B}_k(s)}{\zeta^2} + \mathcal{O}\left(\frac{1}{\zeta^3}\right).
\endeq
Using \eqref{L-expansion},  we see 
\eq
{\bf A}_k = {\bf Y}_k^{(1)} + {\bf D}_k^{(-1)}, \quad {\bf B}_k = {\bf Y}_k^{(1)}{\bf D}_k^{(-1)} + {\bf Y}_k^{(2)} + \frac{1}{2}\left({\bf D}_k^{(-1)}\right)^2 + {\bf D}_k^{(-2)}.
\endeq
Applying \eqref{L-coefficients}, 
\eq
\label{Ak-Bk}
\begin{split}
{\bf A}_k = & \frac{1}{2}\bbm i(\mathcal{U}_k^2\mathcal{V}_k^2+s\mathcal{U}_k\mathcal{V}_k + \mathcal{U}_k'\mathcal{V}_k') & \mathcal{U}_k \\ \mathcal{V}_k & -i(\mathcal{U}_k^2\mathcal{V}_k^2+s\mathcal{U}_k\mathcal{V}_k + \mathcal{U}_k'\mathcal{V}_k') \ebm,\\
{\bf B}_k = & \frac{i}{4}\bbm 0 & \mathcal{U}_k'-\mathcal{U}_k(\mathcal{U}_k^2\mathcal{V}_k^2+s\mathcal{U}_k\mathcal{V}_k+\mathcal{U}_k'\mathcal{V}_k') \\ -\mathcal{V}_k'+\mathcal{V}_k(\mathcal{U}_k^2\mathcal{V}_k^2+s\mathcal{U}_k\mathcal{V}_k+\mathcal{U}_k'\mathcal{V}_k')  & 0 \ebm \\ 
  &  + \text{(diagonal)}.
\end{split}
\endeq
We will not need the diagonal terms in ${\bf B}_k$.

\subsubsection{Differential equations for the logarithmic derivatives}
We note that 
\eq
\label{lambda-def}
\lambda_k:=\mathcal{U}_k(s)\mathcal{V}_k'(s) - \mathcal{V}_k(s)\mathcal{U}_k'(s)
\endeq
is an $s$-independent quantity.  To see this, simply multiply the second  
equation in \eqref{coupled-PII} by $\mathcal{U}_k(s)$, multiply the first 
equation by $\mathcal{V}_k(s)$, and subtract:
\eq
\mathcal{U}_k(s)\mathcal{V}_k''(s) - \mathcal{V}_k(s)\mathcal{U}_k''(s) = 0.
\endeq
This is equivalent to 
\eq
\frac{d}{ds}(\mathcal{U}_k(s)\mathcal{V}_k'(s) - \mathcal{V}_k(s)\mathcal{U}_k'(s)) = 0.
\endeq
To see exactly what $\lambda_k$ is in terms of $k$, we recall from 
\eqref{L-coefficients} that the same combination 
$\mathcal{U}_k\mathcal{V}_k'-\mathcal{V}_k\mathcal{U}_k'$ appears in 
${\bf D}_k^{(0)}$:
\eq
{\bf D}_k^{(0)} = \lambda_k\sigma_3.
\endeq
From the expansion \eqref{L-expansion}  for ${\bf L}_k(\zeta)$ and the 
expressions for the coefficients \eqref{L-coefficients}, we see
\eq
{\bf L}_k(\zeta) e^{(\frac{4}{3}i\zeta^3 + is)\sigma_3}\zeta^{-\lambda_k\sigma_3} = \mathbb{I} + \mathcal{O}\left(\frac{1}{\zeta}\right).
\endeq
Using the definition of ${\bf L}_k(\zeta)$ in \eqref{L-def}, this gives 
\eq
{\bf Z}_k(\zeta)\zeta^{-\lambda_k\sigma_3} = \mathbb{I} + \mathcal{O}\left(\frac{1}{\zeta}\right).
\endeq
Comparing with the expansion \eqref{Z-expansion} for ${\bf Z}_k(\zeta)$, we 
obtain
\eq
\label{lambda-equals-k}
\lambda_k \equiv k.
\endeq
Next, by using the first equation in \eqref{coupled-PII} to express 
$\mathcal{V}_k(s)$ in terms of $\mathcal{U}_k(s)$, a direct calculation shows
\eq
\frac{1}{2}\frac{d^2}{ds^2}\left(\frac{\mathcal{U}_k'(s)}{\mathcal{U}_k(s)}\right) = \left(\frac{\mathcal{U}_k'(s)}{\mathcal{U}_k(s)}\right)^3 - s \frac{\mathcal{U}_k'(s)}{\mathcal{U}_k(s)} + \frac{1}{2} +\mathcal{U}_k(s)\mathcal{V}_k'(s) - \mathcal{V}_k(s)\mathcal{U}_k'(s).
\endeq
Similarly, using the second equation in \eqref{coupled-PII} to express 
$\mathcal{U}_k(s)$ in terms of $\mathcal{V}_k(s)$, 
\eq
\frac{1}{2}\frac{d^2}{ds^2}\left(\frac{\mathcal{V}_k'(s)}{\mathcal{V}_k(s)}\right) = \left(\frac{\mathcal{V}_k'(s)}{\mathcal{V}_k(s)}\right)^3 - s \frac{\mathcal{V}_k'(s)}{\mathcal{V}_k(s)} + \frac{1}{2} - (\mathcal{U}_k(s)\mathcal{V}_k'(s) - \mathcal{V}_k(s)\mathcal{U}_k'(s)).
\endeq
Thus, from \eqref{lambda-def} and \eqref{lambda-equals-k}, the logarithmic 
derivatives $p_k(s):=\mathcal{U}_k'(s)/\mathcal{U}_k(s)$ and 
$q_k(s):=\mathcal{V}_k'(s)/\mathcal{V}_k(s)$ (cf. \eqref{log-derivatives}) 
satisfy the \emph{uncoupled} inhomogeneous Painlev\'e-II equations 
\eq
\frac{1}{2}p_k''(s) = p_k(s)^3 - sp_k(s) + \frac{1}{2} + k, \quad \frac{1}{2}q_k''(s) = q_k(s)^3 - sq_k(s) + \frac{1}{2} - k.
\endeq
By scaling $P_k(x):=2^{-1/3}p_k(-2^{-1/3}x)$ and 
$Q_k(x):=2^{-1/3}q_k(-2^{-1/3}x)$ (cf. \eqref{scaled-P-Q}),
we can bring these into the standard Painlev\'e-II form matching \eqref{PII}:
\eq
\label{uncoupled-PII}
P_k''(x) = 2P_k(x)^3 + xP_k(x) + \frac{1}{2} + k, \quad Q_k''(x) = 2Q_k(x)^3 + xQ_k(x) + \frac{1}{2} - k.
\endeq

\subsubsection{Schlesinger and B\"acklund transformations}
Fix $k$ and assume the functions ${\bf Z}_k(\zeta;s)$, $\mathcal{U}_k(s)$, 
and $\mathcal{V}_k(s)$ are known.  Then it is possible to obtain 
${\bf Z}_{k\pm 1}(\zeta;s)$, $\mathcal{U}_{k\pm 1}(s)$, and 
$\mathcal{V}_{k\pm 1}(s)$.  The maps 
${\bf Z}_k(\zeta;s)\to{\bf Z}_{k\pm 1}(\zeta;s)$ are called \emph{Schlesinger 
transformations}, while the maps 
$\{\mathcal{U}_k(s),\mathcal{V}_k(s)\}\to\{\mathcal{U}_{k\pm 1}(s),\mathcal{V}_{k\pm 1}(s)\}$
are called \emph{B\"acklund transformations}.  We begin with the ansatz
\eq
\label{Schlesinger-ansatz}
{\bf Z}_{k+1}(\zeta;s) = ({\mathfrak Q}_k(s)\zeta+{\mathfrak R}_k(s)){\bf Z}_k(\zeta;s)
\endeq
and determine the matrices ${\mathfrak Q}_k(s)$ and ${\mathfrak R}_k(s)$.  
Inserting \eqref{Z-expansion} into \eqref{Schlesinger-ansatz} gives
\eq
(\mathfrak{Q}_k\zeta+\mathfrak{R}_k)\left(\mathbb{I} + \frac{{\bf A}_k}{\zeta} + \frac{{\bf B}_k}{\zeta^2} + \mathcal{O}\left(\frac{1}{\zeta^3}\right)\right)\zeta^{-\sigma_3} = \mathbb{I} + \mathcal{O}\left(\frac{1}{\zeta}\right).
\endeq
Grouping terms in each power of $\zeta$ gives
\eq
\begin{split}
\mathcal{O}(\zeta^2): & \quad \mathfrak{Q}_k\bbm 0 & 0 \\ 0 & 1 \ebm = {\bf 0},\\
\mathcal{O}(\zeta): & \quad (\mathfrak{Q}_k{\bf A}_k+\mathfrak{R}_k)\bbm 0 & 0 \\ 0 & 1 \ebm = {\bf 0},\\
\mathcal{O}(1): & \quad \mathfrak{Q}_k\bbm 1 & 0 \\ 0 & 0 \ebm + (\mathfrak{Q}_k{\bf B_k}+\mathfrak{R}_k{\bf A_k})\bbm 0 & 0 \\ 0 & 1 \ebm = \mathbb{I}. 
\end{split}
\endeq
The $\mathcal{O}(\zeta^2)$ equation gives 
$[\mathfrak{Q}_k]_{12}=[\mathfrak{Q}_k]_{22}=0$, while the first column of the 
$\mathcal{O}(1)$ equation gives $[\mathfrak{Q}_k]_{11}=1$ and 
$[\mathfrak{Q}_k]_{21}=0$.  Using this in the $\mathcal{O}(\zeta)$ equation 
shows $[\mathfrak{R}_k]_{12}=-[{\bf A}_k]_{12}$ and $[\mathfrak{R}_k]_{22}=0$.  
Then the second column of the $\mathcal{O}(1)$ equations shows 
$[\mathfrak{R}_k]_{21}=[{\bf A}_k]_{12}^{-1}$ and 
$[\mathfrak{R}_k]_{11} = [{\bf A}_k]_{22} - [{\bf A}_k]_{12}^{-1}[{\bf B_k}]_{12}$.  
Along with the expressions for ${\bf A}_k$ and ${\bf B}_k$ in 
\eqref{Ak-Bk}, we have
\eq
\mathfrak{Q}_k = \bbm 1 & 0 \\ 0 & 0 \ebm, \quad \mathfrak{R}_k = \bbm \displaystyle -\frac{i}{2}\mathcal{U}_k'\mathcal{U}_k^{-1} & \displaystyle -\frac{1}{2}\mathcal{U}_k \\ 2\mathcal{U}_k^{-1} & 0 \ebm.
\endeq
From the Schlesinger transformation we can now obtain the B\"acklund 
transformations.  Using the expansion \eqref{Z-expansion} in the Schlesinger 
transformation \eqref{Schlesinger-ansatz} gives the equation 
\eq
(\mathfrak{Q}_k\zeta+\mathfrak{R}_k)\left(\mathbb{I} + \frac{{\bf A}_k}{\zeta} + \frac{{\bf B}_k}{\zeta^2} + \mathcal{O}\left(\frac{1}{\zeta^3}\right)\right)\zeta^{-\sigma_3} = \mathbb{I} + \frac{{\bf A}_{k+1}}{\zeta} + \frac{{\bf B}_{k+1}}{\zeta^2} + \mathcal{O}\left(\frac{1}{\zeta^3}\right).
\endeq
Reading off the (21)-entry of the $\mathcal{O}(\zeta^{-1})$ term gives 
$[\mathfrak{R}_k]_{21} = [{\bf A}_{k+1}]_{21}$, or 
$2\mathcal{U}_k^{-1}=\frac{1}{2}\mathcal{V}_{k+1}$.  Then we can take the 
second equation in \eqref{coupled-PII} with $k\to k+1$, i.e. 
$\mathcal{V}_{k+1}''=2\mathcal{U}_{k+1}\mathcal{V}_{k+1}^2+s\mathcal{V}_{k+1}$, 
solve for $\mathcal{U}_{k+1}$ in terms of $\mathcal{V}_{k+1}$, and use 
$\mathcal{V}_{k+1}=4\mathcal{U}_k^{-1}$.  We therefore obtain the B\"acklund 
transformation 
\eq
\label{Backlund}
\mathcal{U}_{k+1} = \frac{1}{4}\frac{(\mathcal{U}_k')^2}{\mathcal{U}_k} - \frac{1}{8}\mathcal{U}_k'' - \frac{s}{8}\mathcal{U}_k, \quad \mathcal{V}_{k+1} = \frac{4}{\mathcal{U}_k}. 
\endeq

\subsubsection{Identification of $\mathcal{U}_k$ and $\mathcal{V}_k$}\label{Id-Uk-Vk}
Our next objective is to identify the associated Painlev\'e functions 
$\mathcal{U}_k(s)$ and $\mathcal{V}_k(s)$.  The uncoupled Painlev\'e-II equation
\eqref{PII} is the compatibility condition for the well-studied Flaschka--Newell 
Lax pair 
\cite{FlaschkaN:1980} (see also, for example, 
\cite{DeiftZ:1995,FokasIKN:2006,Liechty:2012,ClaeysKV:2008})
\eq
\label{Flaschka-Newell-Lax-pair}
\begin{split}
\frac{\partial{\bf \Psi}}{\partial s} & = \bbm -i\zeta & u(s) \\ u(s) & i\zeta \ebm {\bf \Psi}, \\
\frac{\partial{\bf \Psi}}{\partial \zeta} & = \bbm -4i\zeta^2-is-2iu(s)^2 & 4\zeta u(s) + 2iu'(s) + \alpha/\zeta \\ 4\zeta u(s) - 2iu'(s) + \alpha/\zeta & 4i\zeta^2 + is + 2iu(s) \ebm {\bf\Psi}.
\end{split}
\endeq
The Riemann--Hilbert problem for this Lax pair has jumps on six semi-infinite rays 
and a pole at the origin of order $\alpha$.  However, at $\alpha=0$ this 
Riemann--Hilbert problem reduces to the Riemann--Hilbert problem for the Jimbo--Miwa 
Lax pair with $k=0$.  We will not need the full Flaschka--Newell Riemann--Hilbert 
problem, but simply note that Riemann--Hilbert Problem \ref{rhp-jm} with $k=0$ 
is a special case.  Indeed, it is well known 
\cite{DeiftZ:1995,FokasIKN:2006,ClaeysKV:2008,Liechty:2012} that the Painlev\'e 
function associated to Riemann--Hilbert Problem \ref{rhp-jm} with $k=0$ is the 
Hastings--McLeod function $u_{_\textnormal{HM}}^{(0)}(s)$ satisfying 
\eqref{PII-hom} with asymptotics \eqref{uhm-gen-plus-inf}.
Now, matching the 
Jimbo--Miwa Lax pair \eqref{Jimbo-Miwa-Lax-pair}  with the Flaschka--Newell Lax pair 
\eqref{Flaschka-Newell-Lax-pair} (with $\alpha=0$) we arrive at 
\eqref{U0-V0-ito-HM}, i.e. 
$\mathcal{U}_0(s) = -iu_{_\textnormal{HM}}^{(0)}(s)$ and 
$\mathcal{V}_0(s) = iu_{_\textnormal{HM}}^{(0)}(s)$.

For $k>0$ we solve the problem iteratively by Schlesinger and B\"acklund 
transformations, relating the functions $\mathcal{U}_k(s)$ and $\mathcal{V}_k(s)$ 
to the generalized Hastings--McLeod functions.  
%The uniformly convergent expansions \eqref{uhm-minus-inf} and 
%\eqref{uhm-gen-plus-inf}, the definition \eqref{log-derivatives} for $p_k$ and 
%$q_k$, and the definition \eqref{scaled-P-Q} for $P_k$ and $Q_k$ show
%\eq
%P_0(x) = Q_0(x) = \begin{cases} \displaystyle -\sqrt{\frac{-x}{2}} + \mathcal{O}\left(\frac{1}{x}\right), & x\to-\infty, \\
%\displaystyle -\frac{1/2}{x} + \mathcal{O}\left(\frac{1}{x^{5/2}}\right), & x\to+\infty.
%\end{cases}
%\endeq
First, we use the uniformly convergent expansions \eqref{uhm-minus-inf} and 
\eqref{uhm-gen-plus-inf}, the relationship \eqref{U0-V0-ito-HM}, and the B\"acklund 
transformations \eqref{Backlund} to find the asymptotic behavior of 
$\mathcal{U}_1(s)$ and $\mathcal{V}_1(s)$.  
\eq
\label{U1-V1}
\begin{split}
\mathcal{U}_1(s) & = \begin{cases} \displaystyle \frac{-i}{16\sqrt{\pi}s^{3/4}}e^{-\frac{2}{3}s^{3/2}}\left(1+\mathcal{O}\left(\frac{1}{s^{3/4}}\right)\right), & s\to+\infty, \\ \displaystyle -\frac{i(-s)^{3/2}}{8\sqrt{2}}\left(1+\mathcal{O}\left(\frac{1}{(-s)^{3/2}}\right)\right), & s\to-\infty, \end{cases} \\
\mathcal{V}_1(s) & = \begin{cases} \displaystyle 8i\sqrt{\pi}s^{1/4}e^{\frac{2}{3}s^{3/2}}\left(1+\mathcal{O}\left(\frac{1}{s^{3/4}}\right)\right), & s\to+\infty, \\ \displaystyle 4i\sqrt{\frac{2}{-s}}\left(1+\mathcal{O}\left(\frac{1}{(-s)^{3/2}}\right)\right), & s\to-\infty.
\end{cases}
\end{split}
\endeq
A straightforward inductive argument applying the B\"acklund transformations 
\eqref{Backlund} repeatedly to \eqref{U1-V1} shows the asymptotics 
\eqref{Uk-Vk} for $\mathcal{U}_k$ and $\mathcal{V}_k$.  
Then \eqref{log-derivatives} and \eqref{scaled-P-Q} give 
\eq
\label{Pk-Qk-asymptotics}
\begin{split}
P_k(x) & = \begin{cases} \displaystyle -\sqrt{\frac{-x}{2}}\left(1 + \mathcal{O}\left(\frac{1}{(-x)^{3/2}}\right)\right), & x\to-\infty, \\ \displaystyle -\frac{k+\frac{1}{2}}{x}\left(1 + \mathcal{O}\left(\frac{1}{x^{3/2}}\right)\right), & x\to+\infty,\end{cases} \\
Q_k(x) & = \begin{cases} \displaystyle \sqrt{\frac{-x}{2}}\left(1 + \mathcal{O}\left(\frac{1}{(-x)^{3/2}}\right)\right), & x\to-\infty, \\ \displaystyle \frac{k-\frac{1}{2}}{x}\left(1 + \mathcal{O}\left(\frac{1}{x^{3/2}}\right)\right), & x\to+\infty.\end{cases}
\end{split}
\endeq
From \eqref{uncoupled-PII} and \eqref{Pk-Qk-asymptotics}, $(-P_k(x))$ satisfies 
the inhomogeneous Painlev\'e-II equation \eqref{PII} and both boundary 
conditions \eqref{uhm-minus-inf}--\eqref{uhm-plus-inf} with 
$\alpha=k+\frac{1}{2}$.  
Similarly, $Q_k(x)$ 
satisfies \eqref{PII} and \eqref{uhm-minus-inf}--\eqref{uhm-plus-inf} with 
$\alpha=k-\frac{1}{2}$.  Therefore, we can identify $P_k(x)$ and $Q_k(x)$ 
in terms of generalized Hastings--McLeod solutions as 
$P_k(x) \equiv -u_{_\textnormal{HM}}^{(k+\frac{1}{2})}(x)$ and 
$Q_k(x) \equiv u_{_\textnormal{HM}}^{(k-\frac{1}{2})}(x)$ (cf. 
\eqref{Pk-Qk-ito-hm}).

\subsection{Airy parametrices}
It remains to build the inner parametrices ${\bf M}_k^{(a)}(z)$ and 
${\bf M}_k^{(b)}(z)$ in the open disks $\mathbb{D}_a$ and $\mathbb{D}_b$.  This 
construction is standard (see, for instance, 
\cite{DeiftKMVZ:1999,BleherL:2011,BertolaB:2014}) and is done in terms of 
Airy functions.  The important fact for us is that the Airy parametrices can 
be matched to the outer parametrix with an $\mathcal{O}(n^{-1})$ error (as 
opposed to the parametrix at $i\mu$, where the error is 
$\mathcal{O}(n^{-1/3})$).  This means the explicit form of the parametrix 
only comes into play when computing quantities of $\mathcal{O}(n^{-1})$.  
Since we limit ourselves to computing $\mathcal{O}(n^{-1/3})$ terms, it is 
sufficient for our purposes to note the existence and uniqueness of functions 
satisfying the following problem.  
\begin{rhp}[The Airy parametrix Riemann--Hilbert problems]
\label{rhp-airy}
Fix a positive integer $k$.  For either $*=a$ or $*=b$, determine a 
$2\times 2$ matrix-valued function ${\bf M}_k^{(*)}(z)$ satisfying:
\begin{itemize}
\item[]{\bf Analyticity:} ${\bf M}_k^{(*)}(z)$ is analytic in 
$\mathbb{D}_*\backslash\Sigma^{\rm crit}$.  In each wedge the solution can be 
analytically continued into a larger wedge, and is H\"older continuous up to 
the boundary in a neighborhood of $z=*$.
\item[]{\bf Normalization:}  
\eq
\label{airy-normalization}
{\bf M}_k^{(*)}(z) = {\bf M}_k^{(\rm{out})}(z)(\mathbb{I}+\mathcal{O}(n^{-1})), \quad z\in\partial\mathbb{D}_*.
\endeq
\item[]{\bf Jump condition:}  For $z\in\mathbb{D}_*\cap\Sigma^{\rm crit}$, 
${\bf M}_k^{(*)}(z)$ satisfies 
${\bf M}_{k+}^{(*)}(z) = {\bf M}_{k-}^{(*)}(z){\bf V}^{\rm crit}(z)$.
\end{itemize}
\end{rhp}

\section{Error analysis}
\label{sec-error}

\subsection{The error problem}

We can now define
\eq
\label{Mk-def}
{\bf M}^{(k)}(z):=\begin{cases} {\bf M}_k^\text{(out)}(z), & z\in\mathbb{C}\backslash\{\mathbb{D}_{i\mu}\cup\mathbb{D}_a\cup\mathbb{D}_b\}, \\ {\bf M}_k^{(i\mu)}(\zeta(z)), & z\in \mathbb{D}_{i\mu}, \\ {\bf M}_k^{(a)}(z), & z\in \mathbb{D}_a, \\ {\bf M}_k^{(b)}(z), & z\in \mathbb{D}_b.  \end{cases}
\endeq
Then the error function is 
\eq
\label{X-crit-def}
{\bf X}_n^{(k)}(z):={\bf S}_n^\text{crit}(z){\bf M}^{(k)}(z)^{-1},
\endeq
and it satisfies the jump condition
\eq
\label{X-crit-jump}
{\bf X}_{n+}^{(k)}(z) = {\bf X}_{n-}^{(k)}(z) {\bf V}^{({\bf X})}(z)
\endeq
on the jump contours $\Sigma^{({\bf X})}$ shown in 
Figure \ref{fig-crit-error-jumps}.
From Lemma \ref{lemma:Scrit-jumps} and \eqref{airy-normalization}, we have 
\eq
\label{VX-bounds}
{\bf V}^{({\bf X})}(z) = \mathbb{I}+\mathcal{O}(n^{-1}), \quad z\in\Sigma^{({\bf X})}\backslash\partial\mathbb{D}_{i\mu}.
\endeq
The jump on $\partial\mathbb{D}_{i\mu}$ will be analyzed in 
\S\ref{subsec-error-parametrix}.
\begin{figure}[h]
\setlength{\unitlength}{1.5pt}
\begin{center}
\begin{picture}(100,35)(-50,28)
\thicklines
\put(-80,65){\line(1,0){160}}
\put(-80,45){\line(1,0){22}}
\put(58,45){\line(1,0){22}}
\put(-80,25){\line(1,0){160}} 
\put(30,65){\line(-3,-2){23}}
\put(30,25){\line(-3,2){23}}
\put(-30,65){\line(3,-2){23}}
\put(-30,25){\line(3,2){23}}
\multiput(-80,41)(4,0){40}{\line(1,0){2}}
\put(-50,25){\line(0,1){12}}
\put(-50,53){\line(0,1){12}}
\put(50,25){\line(0,1){12}}
\put(50,53){\line(0,1){12}}
\put(82,38.5){$\mathbb{R}$}
\put(82,63){$\mathbb{R}+i(\mu+\epsilon)$}
\put(82,23){$\mathbb{R}-i\delta$}
\put(-50,45){\circle{15}}
\put(0,45){\circle{15}}
\put(50,45){\circle{15}}
\put(-70,52){$\partial\mathbb{D}_a$}
\put(56,52){$\partial\mathbb{D}_b$}
\put(-7,55){$\partial\mathbb{D}_{i\mu}$}
\put(1,65){\vector(-1,0){5}}
\put(66,45){\vector(1,0){5}}
\put(-70,45){\vector(1,0){5}}
\put(67,65){\vector(-1,0){5}}
\put(62,25){\vector(1,0){5}}
\put(-64,65){\vector(-1,0){5}}
\put(-68,25){\vector(1,0){5}}
\put(-4,25){\vector(1,0){5}}
\put(50,56){\vector(0,1){5}}
\put(50,28){\vector(0,1){5}}
\put(-50,61){\vector(0,-1){5}}
\put(-50,33){\vector(0,-1){5}}
\put(-44,50){\vector(1,-1){1}}
\put(45.5,51.5){\vector(1,1){1}}
\put(1,52.8){\vector(1,0){1}}
\put(-38,65){\vector(-1,0){5}}
\put(42,65){\vector(-1,0){5}}
\put(-42,25){\vector(1,0){5}}
\put(38,25){\vector(1,0){5}}
\put(30,65){\vector(-3,-2){15}}
\put(-30,25){\vector(3,2){12}}
\put(-15,55){\vector(-3,2){5}}
\put(15,35){\vector(3,-2){8}}
\end{picture}
\end{center}
\caption{\label{fig-crit-error-jumps} The jump contours $\Sigma^{({\bf X})}$ for the error problem along with their orientations.  The dotted line is the real axis.}
\end{figure}
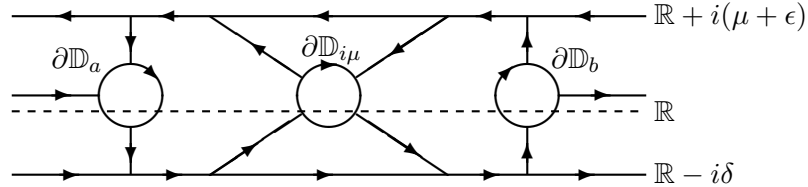
The error function has the expansion
\eq
{\bf X}_n^{(k)}(z) = \mathbb{I} + \frac{{\bf X}_{n,1}^{(k)}}{z} + \mathcal{O}\left(\frac{1}{z^2}\right), \quad z\to\infty,
\endeq
where ${\bf X}_{n,1}^{(k)}$ is independent of $z$.

\subsection{Parametrix for the error}
\label{subsec-error-parametrix}
The jump matrix ${\bf V}^{({\bf X})}(z)$ is well controlled (i.e. 
close to the identity asymptotically as $n\to\infty$) for 
$z\in\Sigma^{({\bf X})}\backslash\partial\mathbb{D}_{i\mu}$.  However, 
we will now see that, for certain $\mu$, the jump on $\partial\mathbb{D}_{i\mu}$ is 
not asymptotically small.  This will necessitate approximating ${\bf X}_n^{(k)}(z)$ 
by its own parametrix ${\bf Y}_n^{(k)}(z)$.  We begin by calculating the jump on 
this circle.  
From \eqref{X-crit-jump} and \eqref{X-crit-def}, and using the fact that 
${\bf S}_n^\text{crit}(z)$ has no jump on $\partial\mathbb{D}_{i\mu}$,
\eq
\left.{\bf V}^{({\bf X})}(z)\right\vert_{\partial\mathbb{D}_{i\mu}} = {\bf M}_k^{(i\mu)}(\zeta(z)){\bf M}_k^\text{(out)}(z)^{-1}.
\endeq
Now, using \eqref{E-k-def} and \eqref{M-k-imu}, and then \eqref{Z-expansion}, 
\eq
\label{VX-expansion}
\begin{split}
\left.{\bf V}^{({\bf X})}(z)\right\vert_{\partial\mathbb{D}_{i\mu}} & = {\bf E}_k(z)\sigma_3e^{i\theta\sigma_3}n^{-k\sigma_3/3}{\bf Z}_k(\zeta(z))\zeta(z)^{-k\sigma_3}n^{k\sigma_3/3}e^{-i\theta\sigma_3}\sigma_3{\bf E}_k(z)^{-1} \\
  & = {\bf E}_k(z)\sigma_3e^{i\theta\sigma_3}n^{-k\sigma_3/3} \left(\mathbb{I} + \frac{{\bf A}_k(s)}{\zeta(z)} + \mathcal{O}\left(\frac{1}{\zeta(z)^2}\right)\right) n^{k\sigma_3/3}e^{-i\theta\sigma_3}\sigma_3{\bf E}_k(z)^{-1}.
\end{split}
\endeq
Using the fact that ${\bf E}_k(z)$ is bounded as $n\to\infty$ and the fact that 
$\zeta=\mathcal{O}(n^{1/3})$ (see \eqref{zeta-growth-in-n}) shows 
\eq\label{D-crit-jump-est}
\left.{\bf V}^{({\bf X})}(z)\right\vert_{\partial\mathbb{D}_{i\mu}} = \mathbb{I} + \bbm 0 & \mathcal{O}(e^{2n\pi\mu}n^{-(2k+1)/3}) \\ \mathcal{O}(e^{-2n\pi\mu}n^{(2k-1)/3}) & 0 \ebm + \mathcal{O}\left(\frac{1}{n^{1/3}}\right).
\endeq
To make these jumps bounded as $n\to\infty$ we require $k$ to be chosen so 
\eq
\label{k-condition}
\frac{1}{3\pi}\left(k-\frac{1}{2}\right)\frac{\log n}{n} < \mu \le \frac{1}{3\pi}\left(k+\frac{1}{2}\right)\frac{\log n}{n}.
\endeq
Note that if $\mu=\frac{k\log n}{3\pi n}$, i.e., it is in the center of the interval \eqref{k-condition}, then the jump \eqref{D-crit-jump-est} is $\bigO(n^{-1/3})$. The error increases as $\mu$ moves away from the center, and at the endpoints $\mu =\frac{1}{3\pi}\left(k\pm \frac{1}{2}\right) \frac{\log n}{n}$, the error function ${\bf X}_n^{(k)}$ no longer satisfies a 
small-norm Riemann--Hilbert problem.  To circumvent this problem and to give a uniform error for $\mu$ throughout the interval \eqref{k-condition} we build a 
\emph{parametrix for the error}.  

Define
\eq
\label{Qpm-def}
Q_+(z) \equiv Q_+(z;s,\mu,\tau,n) = \frac{\mathcal{U}_k(s)}{2\zeta(z)}\cdot\frac{e^{2i\theta}}{n^{2k/3}}, \quad Q_-(z) \equiv Q_-(z;s,\mu,\tau,n) = \frac{\mathcal{V}_k(s)}{2\zeta(z)}\cdot\frac{n^{2k/3}}{e^{2i\theta}}
\endeq
for $z\in\mathbb{D}_{i\mu}$.  We record that, at worst,
\eq
\label{size-of-Q}
\begin{split}
Q_+(z) & = \begin{cases} 
\mathcal{O}(n^{-1/3}), & \displaystyle \left(k-\frac{1}{2}\right)\frac{\log n}{3\pi n} \le \mu \le k\frac{\log n}{3\pi n}, 
\\ 
\mathcal{O}(1), & \displaystyle k\frac{\log n}{3\pi n} < \mu \le \left(k+\frac{1}{2}\right)\frac{\log n}{3\pi n}, 
\end{cases} \\
Q_-(z) & = \begin{cases} 
\mathcal{O}(1), & \displaystyle \left(k-\frac{1}{2}\right)\frac{\log n}{3\pi n} \le \mu < k\frac{\log n}{3\pi n}, 
\\ 
\mathcal{O}(n^{-1/3}), & \displaystyle k\frac{\log n}{3\pi n} \le \mu \le \left(k+\frac{1}{2}\right)\frac{\log n}{3\pi n}.
\end{cases}
\end{split}
\endeq
We can now rewrite \eqref{VX-expansion} as
\eq
\label{VX-ito-Q}
\left.{\bf V}^{({\bf X})}(z)\right\vert_{\partial\mathbb{D}_{i\mu}} = \mathbb{I} - {\bf E}_k(z) \bbm 0 & Q_+(z) \\ Q_-(z) & 0 \ebm {\bf E}_k(z)^{-1} + \mathcal{O}\left(\frac{1}{n^{1/3}}\right).
\endeq
Here we have used that $k$ is given so \eqref{k-condition} is satisfied in order to 
bound the terms proportional to $\zeta^{-2}$.  If we discard all 
$\mathcal{O}(n^{-1/3})$ terms we are led to the following model problem.
\begin{rhp}[The parametrix for the error]
\label{rhp-error-parametrix}
Determine the $2\times 2$ matrix-valued function ${\bf Y}_n^{(k)}(z)$ satisfying:
\begin{itemize}
\item[]{\bf Analyticity:} ${\bf Y}_n^{(k)}(z)$ is analytic for 
$z\in\mathbb{C}\backslash\partial\mathbb{D}_{i\mu}$ with 
H\"older-continuous boundary values on $\partial\mathbb{D}_{i\mu}$.
\item[]{\bf Normalization:}  
\eq
{\bf Y}_n^{(k)}(z) = \mathbb{I}+\mathcal{O}\left(\frac{1}{z}\right) \text{ as } z\to\infty.
\endeq
\item[]{\bf Jump condition:}  Orienting the jump contour negatively, the 
solution satisfies 
\eq
\label{Y-jump}
{\bf Y}_{n+}^{(k)}(z)={\bf Y}_{n-}^{(k)}(z){\bf V}^{({\bf Y})}(z) \equiv {\bf Y}_{n-}^{(k)}(z) \left(\mathbb{I} - {\bf E}_k(z) {\bf Q}(z) {\bf E}_k(z)^{-1} \right), \quad z\in\partial\mathbb{D}_{i\mu},
\endeq
where
\eq
\label{Q-def}
{\bf Q}(z):= \begin{cases} \bbm 0 & 0 \\ Q_-(z) & 0 \ebm, & \displaystyle \left(k-\frac{1}{2}\right)\frac{\log n}{3\pi n} < \mu \le k\frac{\log n}{3\pi n}, \vspace{.05in} \\ \bbm 0 & Q_+(z) \\ 0 & 0 \ebm, & \displaystyle k\frac{\log n}{3\pi n} < \mu \le \left(k+\frac{1}{2}\right)\frac{\log n}{3\pi n}. \end{cases}
\endeq
\end{itemize}
\end{rhp}
Before we solve this Riemann--Hilbert problem explicitly, we define the ratio 
\eq
{\bf Z}_n^{(k)}(z) := {\bf X}_n^{(k)}(z){\bf Y}_n^{(k)}(z)^{-1}
\endeq
and note that the jump ${\bf V}^{({\bf Z})}(z)$ for this function is uniformly 
close to the identity for large $n$ on its jump contour $\Sigma^{({\bf Z})}=\Sigma^{({\bf X})}$.  
Indeed, the jumps for $z\notin\partial\mathbb{D}_{i\mu}$ are 
controlled since the jumps for ${\bf X}_n^{(k)}(z)$ are (and ${\bf Y}_n^{(k)}(z)$ 
is continuous there), while at the same time \eqref{VX-ito-Q} and \eqref{Y-jump} 
give
\eq
\label{VZ-formula}
\begin{split}
\left.{\bf V}^{({\bf Z})}(z)\right\vert_{\partial\mathbb{D}_{i\mu}} & = {\bf Z}_{n-}^{(k)}(z)^{-1}{\bf Z}_{n+}^{(k)}(z) \\ 
  & = {\bf Y}_{n-}^{(k)}(z){\bf X}_{n-}^{(k)}(z)^{-1}{\bf X}_{n+}(z){\bf Y}_{n+}^{(k)}(z)^{-1} \\ 
  & = {\bf Y}_{n-}^{(k)}(z){\bf V}^{({\bf X})}(z){\bf V}^{({\bf Y})}(z)^{-1}{\bf Y}_{n-}^{(k)}(z)^{-1} \\ 
  & = \mathbb{I} - {\bf Y}_{n-}^{(k)}(z){\bf E}_k(z) \widehat{\bf Q}(z){\bf E}_k(z)^{-1}{\bf Y}_{n-}^{(k)}(z)^{-1} + \mathcal{O}\left(\frac{1}{n}\right),
\end{split}
\endeq
wherein
\eq
\label{Qhat-def}
\widehat{\bf Q}(z) := \begin{cases} \bbm 0 & Q_+(z) \\ 0 & 0 \ebm, & \displaystyle \left(k-\frac{1}{2}\right)\frac{\log n}{3\pi n} < \mu \le k\frac{\log n}{3\pi n}, \vspace{.05in} \\ \bbm 0 & 0 \\ Q_-(z) & 0 \ebm, & \displaystyle k\frac{\log n}{3\pi n} < \mu \le \left(k+\frac{1}{2}\right)\frac{\log n}{3\pi n}. \end{cases}
\endeq
To invert ${\bf V}^{({\bf Y})}(z)$ we used the fact that ${\bf Q}(z)$ is 
nilpotent. Equation \eqref{size-of-Q} shows us that 
$\widehat{\bf Q}(z)=\mathcal{O}(n^{-1/3})$ (assuming the correct choice of $k$), 
and so 
\eq
\label{VZ-on-Dimu}
{\bf V}^{({\bf Z})}(z) = \mathcal{O}(n^{-1/3}), \quad z\in\partial\mathbb{D}_{i\mu}.
\endeq
Away from $\partial\mathbb{D}_{i\mu}$, \eqref{VX-bounds} gives the stronger 
bound 
\eq
\label{VZ-off-Dimu}
{\bf V}^{({\bf Z})}(z) = \mathcal{O}(n^{-1}), \quad z\in\Sigma^{({\bf Z})}\backslash\partial\mathbb{D}_{i\mu},
\endeq
with the largest contribution coming from $\partial\mathbb{D}_a$ and 
$\partial\mathbb{D}_b$.

\subsubsection{Solution of Riemann--Hilbert Problem \ref{rhp-error-parametrix} for 
the error parametrix ${\bf Y}_n^{(k)}(z)$.}
Guided by the calculations in \cite[\S3.5.2]{BuckinghamM:2015}, we now solve 
Riemann--Hilbert Problem \ref{rhp-error-parametrix} exactly.  The meromorphic 
continuation of ${\bf Y}_n^{(k)}(z)$ from the exterior to the interior of 
$\mathbb{D}_{i\mu}$ is 
\eq
\label{Ytilde}
{\bf \widetilde{Y}}_n^{(k)}(z) := \begin{cases} {\bf Y}_n^{(k)}(z), & z\in\mathbb{C}\backslash\mathbb{D}_{i\mu}, \\ \displaystyle {\bf Y}_n^{(k)}(z)\left(\mathbb{I} - {\bf E}_k(z) {\bf Q}(z) {\bf E}_k(z)^{-1} \right), & z\in\mathbb{D}_{i\mu}. \end{cases} 
\endeq
Equivalently, we can use the nilpotency of ${\bf Q}$ to write
\eq
\label{Yk}
{\bf Y}_n^{(k)}(z) := \begin{cases} {\bf \widetilde{Y}}_n^{(k)}(z), & z\in\mathbb{C}\backslash\mathbb{D}_{i\mu}, \\ \displaystyle {\bf \widetilde{Y}}_n^{(k)}(z)\left(\mathbb{I} + {\bf E}_k(z) {\bf Q}(z) {\bf E}_k(z)^{-1} \right), & z\in\mathbb{D}_{i\mu}. \end{cases} 
\endeq
This function tends to the identity matrix as $z\to\infty$ (since 
${\bf Y}_n^{(k)}(z)$ does), and is analytic for all $z$ except for a simple pole 
at $z=i\mu$.  Therefore, ${\bf \widetilde{Y}}_n^{(k)}(z)$ necessarily has the 
form
\eq
\label{Ytilde-ansatz}
{\bf \widetilde{Y}}_n^{(k)}(z) = \mathbb{I} + \frac{1}{z-i\mu}{\bf B},
\endeq
where ${\bf B}\equiv{\bf B}(s,\mu,\tau,n)$ is independent of $z$.  It only remains 
to determine ${\bf B}$, which can be done using the fact that ${\bf Y}_n^{(k)}(z)$ 
is analytic at $z=i\mu$.  This analyticity implies 
\eq
\label{Yn-explicit}
{\bf Y}_n^{(k)}(z) = \left( \mathbb{I} + \frac{1}{z-i\mu}{\bf B} \right) \left(\mathbb{I} + {\bf E}_k(z) {\bf Q}(z) {\bf E}_k(z)^{-1} \right) = \mathcal{O}(1) \text{ for } z\to i\mu
\endeq
via \eqref{Ytilde} and \eqref{Ytilde-ansatz}.  
We expand the constituent functions around $z=i\mu$, keeping in mind that 
${\bf Q}(z)$ has a simple pole at $z=i\mu$:
\eq
\label{E-Q-expansions}
{\bf E}_k(z) = {\bf F} + (z-i\mu){\bf G} + \mathcal{O}\left((z-i\mu)^2\right), \quad {\bf Q}(z) = \frac{1}{z-i\mu}{\bf R} + {\bf S} + \mathcal{O}\left((z-i\mu)^2\right).
\endeq
Here the matrices ${\bf F}$, ${\bf G}$, ${\bf R}$, and ${\bf S}$ are independent of 
$z$.  
Plugging \eqref{E-Q-expansions} into \eqref{Yn-explicit} and isolating the Laurent 
terms gives
\eq
\begin{split}
\mathcal{O}\left(\frac{1}{(z-i\mu)^2}\right): \quad & {\bf B}{\bf F}{\bf R}{\bf F}^{-1} = {\bf 0}, \\
\mathcal{O}\left(\frac{1}{z-i\mu}\right): \quad & {\bf B} + {\bf F}{\bf R}{\bf F}^{-1} - {\bf B}{\bf F}{\bf R}{\bf F}^{-1}{\bf G}{\bf F}^{-1} + {\bf B}{\bf F}{\bf S}{\bf F}^{-1} + {\bf B}{\bf G}{\bf R}{\bf F}^{-1} = 0
\end{split}
\endeq
The invertibility of ${\bf F}$ follows from the invertibility of 
${\bf F}_k^{(\text{out})}(z)$ at $z=i\mu$.  The first equation is equivalent to 
${\bf B}{\bf F}{\bf R}={\bf 0}$.  This also implies that 
${\bf B}{\bf F}{\bf S}={\bf 0}$ since ${\bf S}$ is a (nonzero) constant multiple 
of ${\bf R}$.  Using these facts in the second equation gives the simplified system
\eq
{\bf B}{\bf F}{\bf R} = {\bf 0}, \quad {\bf B}{\bf F} + {\bf F}{\bf R} + {\bf B}{\bf G}{\bf R} = 0.
\endeq
Looking at the first equation, we recall that ${\bf R}$ is either strictly 
upper-triangular (in which case the first column of ${\bf B}{\bf F}$ is zero) or 
strictly lower-triangular (in which case the second column of ${\bf B}{\bf F}$ is 
zero).  This can be used along with the second equation and the fact that 
${\bf R}$ is strictly triangular to solve for 
${\bf B}{\bf F}$.  After multiplying on the right by ${\bf F}^{-1}$, we find 
\eq
\label{B-formula}
{\bf B} = \begin{cases} \displaystyle \frac{-{\bf F}{\bf R}{\bf F}^{-1}}{1 + [{\bf F}^{-1}{\bf G}]_{12}[{\bf R}]_{21}}, & \displaystyle \left(k-\frac{1}{2}\right)\frac{\log n}{3\pi n} < \mu \le k\frac{\log n}{3\pi n}, \vspace{.05in} \\ \displaystyle \frac{-{\bf F}{\bf R}{\bf F}^{-1}}{1 + [{\bf F}^{-1}{\bf G}]_{21}[{\bf R}]_{12}}, & \displaystyle k\frac{\log n}{3\pi n} < \mu \le \left(k+\frac{1}{2}\right)\frac{\log n}{3\pi n}. \end{cases}
\endeq
Only the (11)-entry will be necessary to compute the winding number probabilities:
\eq
\label{B11}
[{\bf B}]_{11} = \begin{cases} \displaystyle \frac{-[{\bf F}]_{12}[{\bf F}]_{22}[{\bf R}]_{21}}{1 + [{\bf F}^{-1}{\bf G}]_{12}[{\bf R}]_{21}}, & \displaystyle \left(k-\frac{1}{2}\right)\frac{\log n}{3\pi n} < \mu \le k\frac{\log n}{3\pi n}, \vspace{.05in} \\ \displaystyle \frac{[{\bf F}]_{11}[{\bf F}]_{21}[{\bf R}]_{12}}{1 + [{\bf F}^{-1}{\bf G}]_{21}[{\bf R}]_{12}}, & \displaystyle k\frac{\log n}{3\pi n} < \mu \le \left(k+\frac{1}{2}\right)\frac{\log n}{3\pi n}. \end{cases}
\endeq
We note immediately from \eqref{E-Q-expansions}, \eqref{Q-def}, 
\eqref{Qpm-def}, and \eqref{zeta-prime} that 
\eq
\label{R-expansion}
{\bf R} = \begin{cases} \bbm 0 & 0 \\ \displaystyle \frac{2^{2/3}\mathcal{V}_k(s)}{\pi }\cdot\frac{n^{(2k-1)/3}}{e^{2i\theta}} + \mathcal{O}\left(\frac{1}{n}\right) & 0 \ebm, & \displaystyle \left(k-\frac{1}{2}\right)\frac{\log n}{3\pi n} < \mu \le k\frac{\log n}{3\pi n}, \vspace{.05in} \\ \bbm 0 & \displaystyle \frac{2^{2/3}\mathcal{U}_k(s)}{\pi}\cdot\frac{e^{2i\theta}}{n^{(2k+1)/3}} + \mathcal{O}\left(\frac{1}{n}\right) \\ 0 & 0 \ebm, & \displaystyle k\frac{\log n}{3\pi n} < \mu \le \left(k+\frac{1}{2}\right)\frac{\log n}{3\pi n}. \end{cases}
\endeq
Later we will also need the expansion of $\widehat{\bf Q}$, so we note 
\eq
\label{Qhat-expansion}
\widehat{\bf Q}(z) = \frac{1}{z-i\mu}\widehat{\bf R} + \mathcal{O}(1),
\endeq
where 
\eq
\label{Rhat-expansion}
\widehat{\bf R} := \begin{cases} 
\bbm 0 & \displaystyle \frac{2^{2/3}\mathcal{U}_k(s)}{\pi}\cdot\frac{e^{2i\theta}}{n^{(2k+1)/3}} + \mathcal{O}\left(\frac{1}{n}\right) \\ 0 & 0 \ebm, & \displaystyle \left(k-\frac{1}{2}\right)\frac{\log n}{3\pi n} < \mu \le k\frac{\log n}{3\pi n}, \vspace{.05in} \\ \bbm 0 & 0 \\ \displaystyle \frac{2^{2/3}\mathcal{V}_k(s)}{\pi }\cdot\frac{n^{(2k-1)/3}}{e^{2i\theta}} + \mathcal{O}\left(\frac{1}{n}\right) & 0 \ebm, & \displaystyle k\frac{\log n}{3\pi n} < \mu \le \left(k+\frac{1}{2}\right)\frac{\log n}{3\pi n}. \end{cases}
\endeq
Next we compute $[{\bf F}]_{12}[{\bf F}]_{22}$ and $[{\bf F}]_{11}[{\bf F}]_{21}$.
From 
\eqref{E-k-def} and \eqref{M-k-out},
\eq
\label{E-explicit}
{\bf E}_k(z) = \bbm \displaystyle \frac{\widetilde{\gamma}(z)+\widetilde{\gamma}(z)^{-1}}{2} & \displaystyle \frac{\widetilde{\gamma}(z)-\widetilde{\gamma}(z)^{-1}}{-2i}e^{-ik\pi} \\ \displaystyle \frac{\widetilde{\gamma}(z)-\widetilde{\gamma}(z)^{-1}}{2i}e^{ik\pi} & \displaystyle \frac{\widetilde{\gamma}(z)+\widetilde{\gamma}(z)^{-1}}{2} \ebm \bbm 0 & \displaystyle -\left(\frac{\widetilde{d}(z)\zeta(z)}{n^{1/3}}\right)^k \\ \displaystyle \left(\frac{n^{1/3}}{\widetilde{d}(z)\zeta(z)}\right)^k & 0 \ebm,
\endeq 
where $\widetilde{\gamma}(z)$ and $\widetilde{d}(z)$ are defined for 
$z\in\mathbb{D}_{i\mu}$ to be the analytic continuations of $\gamma(z)$ and 
$d(z)$ from the upper half-plane.  
From Lemma \ref{d-properties-lem} (c), 
\eq
\label{dtilde-zeta}
\widetilde{d}(z)\zeta(z) = \frac{4i\zeta'(i\mu)}{\sqrt{T}} + \mathcal{O}(z-i\mu).
\endeq 
Then \eqref{zeta-prime} and \eqref{T-scaling} show
\eq
\label{zetaprime-ito-n}
\frac{4\zeta'(i\mu)}{\sqrt{T}} = (2n)^{1/3} + \mathcal{O}\left(\frac{1}{n^{1/3}}\right).
\endeq
Combining the previous three equations along with ${\bf F}\equiv{\bf E}_k(i\mu)$ 
and $\widetilde{\gamma}(i\mu)^2=-i$ gives
\eq
\label{F12F22-F11F21}
\begin{split}
[{\bf F}]_{12}[{\bf F}]_{22} & = \frac{\widetilde{\gamma}(i\mu)^2 - \widetilde{\gamma}(i\mu)^{-2}}{4i} 2^{2k/3} + \mathcal{O}\left(\frac{1}{n^{2/3}}\right) = -2^{(2k-3)/3} + \mathcal{O}\left(\frac{1}{n^{2/3}}\right),\\ 
[{\bf F}]_{11}[{\bf F}]_{21} & = \frac{\widetilde{\gamma}(i\mu)^2 - \widetilde{\gamma}(i\mu)^{-2}}{-4i} 2^{-2k/3} + \mathcal{O}\left(\frac{1}{n^{2/3}}\right) = 2^{-(2k+3)/3} + \mathcal{O}\left(\frac{1}{n^{2/3}}\right).
\end{split}
\endeq
Recall the definitions of $R_\mathcal{V}$ and $R_\mathcal{U}$ in 
\eqref{RV-RU-def}.  Then, from \eqref{R-expansion} and \eqref{F12F22-F11F21}, 
\eq
\label{FFR}
\begin{split}
[{\bf F}]_{12}[{\bf F}]_{22}[{\bf R}]_{21} & = -R_\mathcal{V}e^{-2i\pi\tau} + \mathcal{O}\left(\frac{1}{n^{2/3}}\right), \quad \left(k-\frac{1}{2}\right)\frac{\log n}{3\pi n} < \mu \le k\frac{\log n}{3\pi n}, \\
[{\bf F}]_{11}[{\bf F}]_{21}[{\bf R}]_{12} & = R_\mathcal{U}e^{2i\pi\tau} + \mathcal{O}\left(\frac{1}{n^{2/3}}\right), \quad k\frac{\log n}{3\pi n} < \mu \le \left(k+\frac{1}{2}\right)\frac{\log n}{3\pi n}.
\end{split} 
\endeq
To find $[{\bf F}^{-1}{\bf G}]_{12}$ and $[{\bf F}^{-1}{\bf G}]_{21}$, we 
differentiate \eqref{E-explicit} and use 
\eqref{dtilde-zeta}--\eqref{zetaprime-ito-n} to write 
\eq
\begin{split}
{\bf G} \equiv & \left.\frac{d{\bf E}_k(z)}{dz}\right|_{z=i\mu} \\
   = & \frac{\widetilde{\gamma}'(i\mu)}{\widetilde{\gamma}(i\mu)}\bbm \displaystyle \frac{\widetilde{\gamma}(i\mu)-\widetilde{\gamma}(i\mu)^{-1}}{2} & \displaystyle \frac{\widetilde{\gamma}(i\mu)+\widetilde{\gamma}(i\mu)^{-1}}{-2ie^{ik\pi}} \\ \displaystyle \frac{\widetilde{\gamma}(i\mu)+\widetilde{\gamma}(i\mu)^{-1}}{2ie^{-ik\pi}} & \displaystyle \frac{\widetilde{\gamma}(i\mu)-\widetilde{\gamma}(i\mu)^{-1}}{2} \ebm \bbm 0 & \displaystyle -\left(\frac{4i\zeta'(i\mu)}{n^{1/3}\sqrt{T}}\right)^k \\ \displaystyle \left(\frac{n^{1/3}\sqrt{T}}{4i\zeta'(i\mu)}\right)^k & 0 \ebm \\
   & + \bbm \displaystyle \frac{\widetilde{\gamma}(i\mu)+\widetilde{\gamma}(i\mu)^{-1}}{2} & \displaystyle \frac{\widetilde{\gamma}(i\mu)-\widetilde{\gamma}(i\mu)^{-1}}{-2ie^{ik\pi}} \\ \displaystyle \frac{\widetilde{\gamma}(i\mu)-\widetilde{\gamma}(i\mu)^{-1}}{2ie^{-ik\pi}} & \displaystyle \frac{\widetilde{\gamma}(i\mu)+\widetilde{\gamma}(i\mu)^{-1}}{2} \ebm  \frac{d}{dz}\left.\bbm 0 & \displaystyle -\left(\frac{\widetilde{d}(z)\zeta(z)}{n^{1/3}}\right)^k \\ \displaystyle \left(\frac{n^{1/3}}{\widetilde{d}(z)\zeta(z)}\right)^k & 0 \ebm \right|_{z=i\mu}.
\end{split}
\endeq
We will not need the explicit form of the final matrix, only that it is 
off-diagonal.  Explicit calculation along with \eqref{T-scaling} gives
\eq
\label{log-deriv-gamma}
\frac{\widetilde{\gamma}'(i\mu)}{\widetilde{\gamma}(i\mu)} = \frac{\sqrt{T}}{4} = \frac{\pi}{4} + \mathcal{O}\left(\frac{1}{n^{2/3}}\right).
\endeq
Using both \eqref{zetaprime-ito-n} and \eqref{log-deriv-gamma} then gives
\eq
\begin{split}
[{\bf F}^{-1}{\bf G}]_{12} & = i(-1)^k\left(\frac{4i\zeta'(i\mu)}{n^{1/3}\sqrt{T}}\right)^{2k}\frac{\widetilde{\gamma}'(i\mu)}{\widetilde{\gamma}(i\mu)} = -i2^{(2k-6)/3}\pi + \mathcal{O}\left(\frac{1}{n^{2/3}}\right),\\
[{\bf F}^{-1}{\bf G}]_{21} & = -i(-1)^k\left(\frac{4i\zeta'(i\mu)}{n^{1/3}\sqrt{T}}\right)^{-2k}\frac{\widetilde{\gamma}'(i\mu)}{\widetilde{\gamma}(i\mu)} = -i2^{(-2k-6)/3}\pi + \mathcal{O}\left(\frac{1}{n^{2/3}}\right).
\end{split}
\endeq
Therefore
\eq
\label{FinvGR}
\begin{split}
[{\bf F}^{-1}{\bf G}]_{12}[{\bf R}]_{21} & = -\frac{\pi}{2i}R_\mathcal{V}e^{-2\pi i\tau} + \mathcal{O}\left(\frac{1}{n^{2/3}}\right), \quad \left(k-\frac{1}{2}\right)\frac{\log n}{3\pi n} < \mu \le k\frac{\log n}{3\pi n}, \\
[{\bf F}^{-1}{\bf G}]_{21}[{\bf R}]_{12} & = \frac{\pi}{2i}R_\mathcal{U}e^{2\pi i\tau} + \mathcal{O}\left(\frac{1}{n^{2/3}}\right), \quad k\frac{\log n}{3\pi n} < \mu \le \left(k+\frac{1}{2}\right)\frac{\log n}{3\pi n}.
\end{split}
\endeq
We now use \eqref{B11}, \eqref{FFR}, and \eqref{FinvGR} to find 
\eq
\label{B11-simplified}
[{\bf B}]_{11} = \begin{cases} \displaystyle \frac{ R_\mathcal{V}e^{-2\pi i\tau}  }{1 - \frac{\pi}{2i}R_\mathcal{V}e^{-2\pi i\tau}} + \mathcal{O}\left(\frac{1}{n^{2/3}}\right), & \displaystyle \left(k-\frac{1}{2}\right)\frac{\log n}{3\pi n} < \mu \le k\frac{\log n}{3\pi n}, \vspace{.05in} \\ \displaystyle \frac{ R_\mathcal{U}e^{2\pi i\tau} }{1 + \frac{\pi}{2i}R_\mathcal{U}e^{2\pi i\tau} } + \mathcal{O}\left(\frac{1}{n^{2/3}}\right), & \displaystyle k\frac{\log n}{3\pi n} < \mu \le \left(k+\frac{1}{2}\right)\frac{\log n}{3\pi n}. \end{cases}
\endeq

\subsection{Proofs of Lemmas \ref{lemma-cnnnm1}, \ref{lemma:asymptotics_of_OPs_critical}, and \ref{lemma:asymptotics_of_OPs_origin}}
\label{subsec-lemma-proofs}
We are now ready to prove the three results on discrete orthogonal 
polynomials needed to establish Theorems \ref{thm-tacnode-winding} and 
\ref{thm:k-tacnode-kernel}.  

\begin{proof}[Proof of Lemma \ref{lemma-cnnnm1}]
From \eqref{winding-probs-ito-cnnnm1} and \eqref{IP5a}, we merely need $[{\bf P}_{n,1}]_{11}$ in 
order to compute the winding probabilities.  
Reversing the changes of variables used in the nonlinear steepest-descent 
analysis, 
\begin{equation}
\mathbf P_n(z) =\bbm 1 & 0 \\ 0 & -2\pi i \ebm^{-1} e^{n\ell\sg_3/2} {\bf Z}_n^{(k)}(z) {\bf Y}_n^{(k)}(z) \mathbf M^{(k)}(z) e^{n(g(z) - \ell/2)\sg_3} \bbm 1 & 0 \\ 0 & -2\pi i \ebm
\end{equation}
for $|z|$ sufficiently large.  The matrices ${\bf M}^{(k)}(z)$, 
${\bf Y}_n^{(k)}(z)$, and ${\bf Z}_n^{(k)}(z)$ can be expanded as 
\eq
\begin{split}
{\bf M}^{(k)}(z) = \mathbb{I}+\frac{\mathbf M_1^{(k)}}{z}+\mathcal{O}\left(\frac{1}{z^2}\right), \quad 
{\bf Y}_n^{(k)}(z) = \mathbb{I}+\frac{\mathbf B}{z}+\mathcal{O}\left(\frac{1}{z^2}\right), \quad {\bf Z}_n^{(k)}(z) = \mathbb{I}+\frac{\mathbf Z_{n,1}^{(k)}}{z}+\mathcal{O}\left(\frac{1}{z^2}\right), 
\end{split}
\endeq
where ${\bf M}_1^{(k)}$, ${\bf B}$, and ${\bf Z}_{n,1}^{(k)}$ are independent of $z$ and the 
expansion for ${\bf Y}_n^{(k)}(z)$ follows from \eqref{Yk} and 
\eqref{Ytilde-ansatz}.  Thus, using \eqref{g1} to expand $g(z)$,
\eq
\begin{split}
\mathbb{I}+\frac{\mathbf P_{n,1}}{z}+\mathcal{O}\left(\frac{1}{z^2}\right) = 
 & \bbm 1 & 0 \\ 0 & -2\pi i \ebm^{-1} e^{n\ell\sg_3/2} \left(\mathbb{I}+\frac{ {\bf M}_1^{(k)} + {\bf B} + {\bf Z}_{n,1}^{(k)}  }{z}+\mathcal{O}\left(\frac{1}{z^2}\right)\right) \\
 &  \times \exp\left(-n\left(\frac{i\mu}{z}+\mathcal{O}\left(\frac{1}{z^2}\right)\right)\sg_3\right) e^{-n\ell\sg_3/2} \bbm 1 & 0 \\ 0 & -2\pi i \ebm.
\end{split}
\endeq
Thus
\eq
\label{Pn1-11}
[\mathbf P_{n,1}]_{11} = \left[\mathbf M_1^{(k)}\right]_{11} + \left[{\bf B}\right]_{11} + \left[\mathbf Z_{n,1}^{(k)}\right]_{11} - in\mu.
\endeq
For $[{\bf B}]_{11}$ refer to \eqref{B11-simplified}.  

We continue by calculating $\left[{\bf M}_1^{(k)}\right]_{11}$.  For $|z|$ 
large, 
we have that ${\bf M}^{(k)}(z)$ is given by ${\bf M}_k^\text{(out)}(z)$ (see 
\eqref{Mk-def}).  By using first \eqref{M-k-out} and then 
\eqref{gamma-def} and Lemma \ref{d-properties-lem} (g), 
\eq
\left[{\bf M}_k^\text{(out)}(z)\right]_{11} = \frac{\gamma(z)+\gamma(z)^{-1}}{2}d(z)^k = \left(1+\mathcal{O}\left(\frac{1}{z^2}\right)\right)\left(1+\frac{2ik}{\sqrt{T}z}+\mathcal{O}\left(\frac{1}{z^2}\right)\right).
\endeq
Therefore, using \eqref{T-scaling} in the second equation,
\eq
\label{Mk1-11}
\left[{\bf M}_1^{(k)}\right]_{11} = \frac{2ik}{\sqrt{T}} = \frac{2ik}{\pi} + \mathcal{O}\left(\frac{1}{n^{2/3}}\right).
\endeq

We now calculate $\left[{\bf Z}_{n,1}^{(k)}\right]_{11}$.  From 
\eqref{VZ-on-Dimu} and \eqref{VZ-off-Dimu}, the small-norm theory of 
Riemann--Hilbert problems (see, for example, \cite{DeiftZ:1993} and 
\cite[\S 5]{Liechty:2012}) shows that ${\bf Z}_n^{(k)}(z)$ can be computed 
via a convergent Neumann series, with
\eq
\label{Zk-integral-form}
{\bf Z}_{n,1}^{(k)} = \frac{-1}{2\pi i}\int_{\Sigma^{({\bf Z})}}\left({\bf V}^{({\bf Z})}(u)-\mathbb{I}\right) du + \mathcal{O}\left(\frac{1}{n^{2/3}}\right) = \frac{-1}{2\pi i}\int_{\partial\mathbb{D}_{i\mu}}\left({\bf V}^{({\bf Z})}(u)-\mathbb{I}\right) du + \mathcal{O}\left(\frac{1}{n^{2/3}}\right).
\endeq
The last integral can be computed by Cauchy's integral formula using 
\eqref{VZ-formula}, \eqref{E-Q-expansions}, and \eqref{Qhat-expansion} (recall 
$\partial\mathbb{D}_{i\mu}$ is oriented in the negative direction):
\eq
\label{Z-ito-YFRFY}
{\bf Z}_{n,1}^{(k)} = -{\bf Y}_n^{(k)}(i\mu){\bf F}{\bf \widehat{R}}{\bf F}^{-1}{\bf Y}_n^{(k)}(i\mu)^{-1} + \mathcal{O}\left(\frac{1}{n^{2/3}}\right).
\endeq
We now show ${\bf Y}_n^{(k)}(i\mu)$ and its inverse can be neglected without 
increasing the error.  From the formula \eqref{Yn-explicit} for 
${\bf Y}_n^{(k)}(z)$ and the formulas \eqref{Q-def} and \eqref{B-formula} for 
${\bf Q}(z)$ and ${\bf B}$ (along with the boundedness of ${\bf E}_k(z)$ in $n$),
\eq
{\bf Y}_n^{(k)}(i\mu) = \begin{cases} \displaystyle \mathbb{I} + \mathcal{O}\left(\frac{n^{(2k-1)/3}}{e^{2n\pi\mu}}\right), & \displaystyle \left(k-\frac{1}{2}\right)\frac{\log n}{3\pi n} < \mu \le k\frac{\log n}{3\pi n}, \vspace{.05in} \\ \displaystyle \mathbb{I} + \mathcal{O}\left(\frac{e^{2n\pi\mu}}{n^{(2k+1)/3}}\right), & \displaystyle k\frac{\log n}{3\pi n} < \mu \le \left(k+\frac{1}{2}\right)\frac{\log n}{3\pi n}. \end{cases}
\endeq
The same bounds hold for ${\bf Y}_n^{(k)}(i\mu)^{-1}$.  On the other hand, 
\eq
\label{Rhat-bounds}
\widehat{\bf R} = \begin{cases} \displaystyle \mathcal{O}\left(\frac{e^{2n\pi\mu}}{n^{(2k+1)/3}}\right), & \displaystyle \left(k-\frac{1}{2}\right)\frac{\log n}{3\pi n} < \mu \le k\frac{\log n}{3\pi n}, \vspace{.05in} \\ \displaystyle \mathcal{O}\left(\frac{n^{(2k-1)/3}}{e^{2n\pi\mu}}\right), & \displaystyle k\frac{\log n}{3\pi n} < \mu \le \left(k+\frac{1}{2}\right)\frac{\log n}{3\pi n}. \end{cases}
\endeq
Plugging these relations into \eqref{Z-ito-YFRFY} and expanding shows all but the 
identity terms from ${\bf Y}_n^{(k)}(i\mu)$ and ${\bf Y}_n^{(k)}(i\mu)^{-1}$ can be 
absorbed into the error term, leaving
\eq
\label{Z-ito-FRF}
{\bf Z}_{n,1}^{(k)} = -{\bf F}{\bf \widehat{R}}{\bf F}^{-1} + \mathcal{O}\left(\frac{1}{n^{2/3}}\right).
\endeq
We note that $\det{\bf F}=\det{\bf E}_k(i\mu)=1$ since $\det{\bf E}_k(z)\equiv 1$.  
Thus 
\eq
\left[{\bf Z}_{n,1}^{(k)}\right]_{11} = \begin{cases} \displaystyle 
[{\bf F}]_{11}[{\bf F}]_{21}\left[\widehat{\bf R}\right]_{12} + \mathcal{O}\left(\frac{1}{n^{2/3}}\right), 
& \displaystyle \left(k-\frac{1}{2}\right)\frac{\log n}{3\pi n} < \mu \le k\frac{\log n}{3\pi n}, \vspace{.05in} \\ \displaystyle 
-[{\bf F}]_{12}[{\bf F}]_{22}\left[\widehat{\bf R}\right]_{21} + \mathcal{O}\left(\frac{1}{n^{2/3}}\right), 
& \displaystyle k\frac{\log n}{3\pi n} < \mu \le \left(k+\frac{1}{2}\right)\frac{\log n}{3\pi n}. \end{cases}
\endeq
From \eqref{Rhat-expansion}, \eqref{F12F22-F11F21}, and \eqref{RV-RU-def}, 
\eq
\left[{\bf Z}_{n,1}^{(k)}\right]_{11} = \begin{cases} \displaystyle R_\mathcal{U}e^{2i\pi\tau} + \mathcal{O}\left(\frac{1}{n^{2/3}}\right), & \displaystyle \left(k-\frac{1}{2}\right)\frac{\log n}{3\pi n} < \mu \le k\frac{\log n}{3\pi n}, \vspace{.05in} \\ \displaystyle R_\mathcal{V}e^{-2i\pi\tau} + \mathcal{O}\left(\frac{1}{n^{2/3}}\right), & \displaystyle k\frac{\log n}{3\pi n} < \mu \le \left(k+\frac{1}{2}\right)\frac{\log n}{3\pi n}. \end{cases}
\endeq
Comparing this to \eqref{B11-simplified}, we see that it is advantageous to instead 
write 
\eq
\label{Zk1-11}
\left[{\bf Z}_{n,1}^{(k)}\right]_{11} = \begin{cases} \displaystyle \frac{R_\mathcal{U}e^{2i\pi\tau}}{1+\frac{\pi}{2i}R_\mathcal{U}e^{2\pi i\tau}} + \mathcal{O}\left(\frac{1}{n^{2/3}}\right), & \displaystyle \left(k-\frac{1}{2}\right)\frac{\log n}{3\pi n} < \mu \le k\frac{\log n}{3\pi n}, \vspace{.05in} \\ \displaystyle \frac{R_\mathcal{V}e^{-2i\pi\tau}}{1-\frac{\pi}{2i}R_\mathcal{V}e^{-2\pi i\tau}} + \mathcal{O}\left(\frac{1}{n^{2/3}}\right), & \displaystyle k\frac{\log n}{3\pi n} < \mu \le \left(k+\frac{1}{2}\right)\frac{\log n}{3\pi n}, \end{cases}
\endeq
which is legal by \eqref{Rhat-bounds}.  Now, combining \eqref{Pn1-11}, 
\eqref{B11-simplified}, \eqref{Mk1-11}, and \eqref{Zk1-11}, $[{\bf P}_{n,1}]_{11}$ 
can be written in the unified form 
\eq
\label{Pn1-final}
[{\bf P}_{n,1}]_{11} = \frac{2ik}{\pi} + \frac{R_\mathcal{V}e^{-2i\pi\tau}}{1-\frac{\pi}{2i}R_\mathcal{V}e^{-2\pi i\tau}} + \frac{R_\mathcal{U}e^{2i\pi\tau}}{1+\frac{\pi}{2i}R_\mathcal{U}e^{2\pi i\tau}} - in\mu + \mathcal{O}\left(\frac{1}{n^{2/3}}\right).
\endeq
Since $[{\bf P}_{n,1}]_{11}=c_{n,n,n-1}^{(T,\mu,\tau)}$, this completes the 
proof of Lemma \ref{lemma-cnnnm1}.
\end{proof}

\begin{proof}[Proof of Lemma \ref{lemma:asymptotics_of_OPs_critical}]
In the domain $\{z \mid \lvert \Im z\rvert >\max(\delta,\mu+\epsilon)\}$ the transformations of the Riemann--Hilbert problem give that
\eq\label{Pn_out_expansion}
\mathbf P_n(z) =\bbm 1 & 0 \\ 0 & -2\pi i \ebm^{-1} e^{n\ell\sg_3/2} {\bf X}_n^{(k)}(z) \mathbf M_k^{({\rm out})}(z) e^{n(g(z) - \ell/2)\sg_3} \bbm 1 & 0 \\ 0 & -2\pi i \ebm.
\endeq
With the choice $\mu = \frac{k\log n}{3\pi n}$, the matrix ${\bf X}_n^{(k)}(z)$ is uniformly close to the identity matrix:
\eq
{\bf X}_n^{(k)}(z)= \mathbb{I}+\bigO(n^{-1/3}).
\endeq
Thus, expanding \eqref{Pn_out_expansion} and recalling that the entries of $\mathbf P_n(z)$ are given in \eqref{IP3}, we immediately obtain the results of Lemma \ref{lemma:asymptotics_of_OPs_critical}.
\end{proof}

\begin{proof}[Proof of Lemma \ref{lemma:asymptotics_of_OPs_origin}]
We first prove the rough estimate \eqref{eq:rough_estimate_critical_zero}. From \eqref{st2}, \eqref{Mk-def}, and \eqref{X-crit-def} we have
\eq\label{Pn-first-crit-expansion}
 \mathbf P_n(z) = e^{\frac{n\ell}{2}\sg_3} {\bf A}^{-1} {\bf X}_n^{(k)}(z){\bf M}_k^{(i\mu)}(z) {\bf H}(z) \bbm 1 & 0 \\ \pm e^{\mp nG(z)} & 1 \ebm {\bf A} e^{n(g(z)-\ell/2)\sg_3} {\bf D}^u_\pm(z)^{-1} \quad {\rm for } \ \pm (\Im z - \mu)>0, \
\endeq
where the matrix ${\bf H}(z)$
is defined piecewise as
\eq
{\bf H}(z) := \begin{cases}
\bbm 1 & \mp e^{\pm nG(z)}e^{\pm 2\pi i(nz-\tau)} \\ 0 & 1 \ebm, & z\in \Om_\pm^\Delta, \\
\mathbb{I}, &{\rm otherwise}.
\end{cases}
\endeq 
Note that both ${\bf M}_k^{(i\mu)}(z)$ and ${\bf X}_n^{(k)}(z)$ are uniformly bounded in $n$. Furthermore, from the formula \eqref{int-def-G}
we find that for $x$ close to zero and small $y>0$, 
\eq\label{G-est}
\Re G(x+i\mu\pm iy) = \pm yT\sqrt{\frac{4}{T} - x^2}+\bigO(y^2).
\endeq
This implies that $\ep$ may be chosen small enough so that $e^{\mp nG(z)}=\bigO(1)$ for all $z$ such that $|z-i\mu|<\ep$ and $\pm (\Im z - \mu)>0$.  Thus, we have
\eq\label{O1-est}
\bbm 1 & 0 \\ \pm e^{\mp nG(z)} & 1 \ebm = \bigO(1).
\endeq
 Also, using \eqref{G-est} in the scalings $T=\pi^2(1+\bigO(n^{-2/3}))$ and $\mu = \frac{k\log n}{3\pi n}$, we find that 
 \eq\label{eG-rough-est}
 e^{\pm nG(z)}e^{\pm 2\pi i(nz-\tau)}= \bigO(n^{\frac{2k}{3}} e^{rn^{1/3}}),
 \endeq 
 where  $r:= \epsilon \sup_n |\sqrt{T}-\pi | n^{2/3}$ is an explicit constant. This estimate is very rough and comes from the fact that the density $\frac{T}{2\pi}\sqrt{\frac{4}{T}-x^2}$ may be slightly bigger than one in the $k$-tacnode scaling, so the sum $G(z)+2\pi i z$ could be positive. If $T<\pi^2$ then $ e^{\pm nG(z)}e^{\pm 2\pi i(nz-\tau)}$ is in fact small throughout a neighborhood of $i\mu$, but the rough estimate \eqref{eG-rough-est} is good enough for our purposes. Thus, we have
 \eq\label{bigG-est}
 {\bf H}(z)  = \bigO(n^{\frac{2k}{3}} e^{rn^{1/3}}).
 \endeq
Plugging \eqref{O1-est} and \eqref{bigG-est} into \eqref{Pn-first-crit-expansion} we find
\eq\label{Pn-est-expansion}
 \mathbf P_n(z) = e^{\frac{n\ell}{2}\sg_3}\bigO(n^{\frac{2k}{3}} e^{rn^{1/3}})e^{n(g(z)-\ell/2)\sg_3} {\bf D}^u_\pm(z)^{-1} \qquad {\rm for} \ \pm (\Im z - \mu)>0. \
\endeq
Since the quantities estimated in \eqref{eq:rough_estimate_critical_zero} are given by the entries of the first column of $ \mathbf P_n(z) $, expanding \eqref{Pn-est-expansion} immediately gives \eqref{eq:rough_estimate_critical_zero}.

Now let us give a more precise formula for  $\mathbf P_n(z)$ in a neighborhood of $z=i\mu$. Using \eqref{M-k-imu} we can write \eqref{Pn-first-crit-expansion} as
\begin{multline}
 \mathbf P_n(z) = e^{\frac{n\ell}{2}\sg_3} {\bf A}^{-1} {\bf X}_n^{(k)}(z){\bf E}_k(z) \sg_3e^{i\theta\sg_3} n^{-k\sg_3/3} {\bf Z}_k(\zeta(z))e^{-i\theta\sg_3} \sg_{2\mp 1} {\bf H}(z) \\
 \times \bbm 1 & 0 \\ \pm e^{\mp nG(z)} & 1 \ebm {\bf A} e^{n(g(z)-\ell/2)\sg_3} {\bf D}^u_\pm(z)^{-1} \qquad {\rm for} \ \pm (\Im z - \mu)>0. \
\end{multline}
From \eqref{L-def} and \eqref{zeta-conf-map} we find that 
\eq 
{\bf Z}_k(\zeta(z))e^{-i\theta\sg_3} = {\bf L}_k(\zeta(z))e^{ \frac{nG(z)}{2}\sg_3}e^{i\pi(nz-\tau)\sg_3}.
\endeq 

If we denote by $\widetilde{\bf L}_k(\zeta(z))$ the analytic continuation of  ${\bf L}_k(\zeta(z))$ from the sectors that include the real line, then the jump conditions for ${\bf L}_k(\zeta(z))$ imply that 
\eq
{\bf L}_k(\zeta(z)) = 
\begin{cases}
\vspace{.05in} \widetilde{\bf L}_k(\zeta(z))\bbm 1 & 0 \\ 1 & 1 \ebm, & z\in \Om_+^\Delta, \\
\widetilde{\bf L}_k(\zeta(z))\bbm 1 & 1 \\ 0 & 1 \ebm, & z\in \Om_-^\Delta, \\
\widetilde{\bf L}_k(\zeta(z)), &{\rm otherwise}.
\end{cases}
\endeq
Combining the three preceding equations, we can write the more uniform expression
\begin{multline}\label{Pn_unif}
 \mathbf P_n(z) = e^{\frac{n\ell}{2}\sg_3} {\bf A}^{-1} {\bf X}_n^{(k)}(z){\bf E}_k(z)e^{i\theta\sg_3}  n^{-k\sg_3/3}  \widetilde{\bf L}_k(\zeta(z)) e^{i\pi(nz-\tau)\sg_3} \bbm 1 & \vspace{.03in} \frac{1\pm 1}{2} \\ 1 &  -\frac{1\mp 1}{2}  \ebm\\
 \times e^{\mp\frac{nG(z)}{2}\sg_3}  {\bf A} e^{n(g(z)-\ell/2)\sg_3} {\bf D}^u_\pm(z)^{-1} \qquad {\rm for} \ \pm (\Im z - \mu)>0.
\end{multline}
Note that for $\mu = \frac{k\log n}{3\pi n}$, the error $ {\bf X}_n^{(k)}(z)$ is uniformly $\mathbb{I}+\bigO(n^{-1/3})$. Also using \eqref{theta-explicit}, we see that 
\eq
e^{i\theta\sg_3}  n^{-k\sg_3/3} = e^{-\frac{in\pi}{2}\sg_3}e^{i\pi\tau\sg_3}.
\endeq

From \eqref{IP3} and the fact that $\det \mathbf P_n(z) \equiv 1$, we have the formula
\begin{multline}
      e^{-\frac{nT}{4}(z^2-2i\mu z+w^2-2i\mu w)} \frac{p^{(T,\mu, \tau)}_{n, n}(z)p^{(T,\mu, \tau)}_{n, n-1}(w)-p^{(T\mu, \tau)}_{n, n-1}(z)p^{(T,\mu, \tau)}_{n, n}(w)}{h^{(T,\mu, \tau)}_{n, n-1}(z-w)} = \\ \frac{  e^{-n(V(z)+V(w))/2} }{z-w}\bbm 0 &1\ebm \mathbf P_n(z)^{-1} \mathbf P_n(w) \bbm 1 \\ 0 \ebm,
\end{multline}
where $V(z)$ is defined in \eqref{V-def}. Plugging \eqref{Pn_unif} into this formula, we find that
\begin{multline}\label{ker-expansion-exact}
\frac{  e^{-n(V(z)+V(w))/2} }{z-w}\bbm 0 & 1 \ebm \mathbf P_n(z)^{-1} \mathbf P_n(w) \bbm 1 \\ 0 \ebm = 
\frac{1}{2\pi i(z-w)} \bbm -e^{-i\pi(nz-\tau)} & e^{i\pi(nz-\tau)}  \ebm  \\ 
\times \widetilde{\bf L}_k(\zeta(z))^{-1}e^{-i\pi\tau\sg_3}e^{\frac{in\pi}{2}\sg_3}{\bf E}_k(z)^{-1}{\bf X}_n^{(k)}(z)^{-1}{\bf X}_n^{(k)}(w) {\bf E}_k(w)e^{-\frac{in\pi}{2}\sg_3}e^{i\pi\tau\sg_3} \widetilde{\bf L}_k(\zeta(w))  \bbm  e^{i\pi(nw-\tau)} \\ e^{-i\pi(nw-\tau)}  \ebm,
\end{multline}
which is obtained by direct calculation and the relation \eqref{G-def} between $g(z)$, $G(z)$, and $V(z)$.
Recall that ${\bf X}_n^{(k)}(z)=\mathbb{I}+\bigO(n^{-1/3})$.  Since ${\bf E}_k(z)$ is analytic at $z=i\mu$, we find that ${\bf E}_k(z)^{-1} {\bf E}_k(w)= \mathbb{I}+\bigO(n^{-\delta})$ for $|z-i\mu|<\ep n^{-\delta}$ and $|w-i\mu|<\ep n^{-\delta}$. Finally, for  $|z-i\mu|<\ep n^{-\delta}$,
\eq
\zeta(z) = dn^{1/3}z(1+\bigO(n^{1/3- 2\delta})),
\endeq
which implies
\eq
 \widetilde{\bf L}_k(\zeta(z))= \widetilde{\bf L}_k(dn^{1/3} z)\left(1+\bigO(n^{1/3- 2\delta})\right).
\endeq
The equation \eqref{ker-expansion-exact} reduces to \eqref{critical_CD_kernel}, and the proof of Lemma \ref{lemma:asymptotics_of_OPs_origin} is complete.
\end{proof}


\begin{thebibliography}{100}

%\bibitem{AdlerCJv:2015}
%M. Adler, S. Chhita, K. Johansson, and P. van Moerbeke, 
%Tacnode GUE-minor processes and double Aztec diamonds,
%{\it Probab. Theory Related Fields} 
%{\bf 162}, 
%275--325
%(2015).

\bibitem{AdlerDv:2009}
M. Adler, J. Del\'epine, and P. van Moerbeke,
Dyson's nonintersecting Brownian motions with a few outliers,
{\it Comm. Pure Appl. Math.}
{\bf 62},
334--395
(2009).

\bibitem{AdlerDvV:2011}
M. Adler, J. Del\'epine, P. van Moerbeke, and P. Vanhaecke,
A {PDE} for non-intersecting {B}rownian motions and applications, 
{\it Adv. Math.}
{\bf 226},
1715--1755
(2011).

\bibitem{AdlerFv:2013}
M. Adler, P. Ferrari, and P. van Moerbeke, 
Nonintersecting random walks in the neighborhood of a symmetric tacnode,
{\it Ann. Prob.} 
{\bf 41}, 
2599--2647 
(2013).

\bibitem{AdlerJv:2014}
M. Adler, K. Johansson, and P. van Moerbeke, 
Double Aztec diamonds and the tacnode process,
{\it Adv. Math.} 
{\bf 252}, 
518--571 
(2014).

\bibitem{AdlerOv:2010}
M. Adler, N. Orantin, and P. van Moerbeke, 
Universality for the Pearcey process, 
{\it Phys. D} 
{\bf 239},
924--941
(2010).

\bibitem{Baik:2006}
J. Baik,
Painlev\'{e} formulas of the limiting distributions for non-null complex sample covariance matrices,
{\it Duke Math. J.} 
{\bf 133}, 
205--235 
(2006).

\bibitem{BaikBP:2005}
J. Baik, G. Ben Arous, and S. P\'ech\'e,
Phase transitions of the largest eigenvalue for non-null complex sample covariance matrices,
{\it Ann. Probab.}
{\bf 33},
1643--1697
(2005).

\bibitem{BaikW:2013}
J. Baik and D. Wang, 
On the largest eigenvalue of a Hermitian random matrix model with spiked external source II: Higher rank cases. 
{\it Int. Math. Res. Not. IMRN} 
{\bf 2013}, 
3304--3370
(2013).

\bibitem{BertolaB:2014} 
M. Bertola and T. Bothner,
Zeros of large degree Vorob'ev-Yablonski polynomials via a Hankel determinant identity,
\textit{Int. Math. Res. Not. IMRN}
\textbf{2015},
9330--9399
(2015).

\bibitem{BertolaBLP:2012}
M. Bertola, R. Buckingham, S. Lee, and V. Pierce,
Spectra of random Hermitian matrices with a small-rank external source:  The critical and near-critical regimes,
{\it J. Stat. Phys.}
\textbf{146},
475--518
(2012).

\bibitem{BertolaC:2013}
M. Bertola and M. Cafasso, 
The gap probabilities of the tacnode, Pearcey and Airy point processes, their mutual relationship and evaluation,
{\it Random Matrices Theory Appl.} 
{\bf 2},
1350003
(2013).

\bibitem{BertolaL:2009}
M. Bertola and S. Lee,
First colonization of a spectral outpost in random matrix theory,
{\it Constr. Approx.}
{\bf 30},
225--263
(2009).

\bibitem{BleherK:2007}
P. Bleher and A. Kuijlaars, 
Large n limit of Gaussian random matrices with external source. III. Double scaling limit. 
{\it Comm. Math. Phys.} 
{\bf 270}, 
481--517
(2007).

\bibitem{BleherL:2011}
P. Bleher and K. Liechty,
Uniform asymptotics for discrete orthogonal polynomials with respect to varying exponential weights on a regular infinite lattice,
{\it Int. Math. Res. Not. IMRN} 
{\bf 2011},
342--386
(2011).

\bibitem{BuckinghamL:2017}
R. Buckingham and K. Liechty,
Nonintersecting Brownian bridges on the unit circle with drift,
arXiv:1707.07211 (2017).

\bibitem{BuckinghamM:2012}
R. Buckingham and P. Miller, 
The sine-Gordon equation in the semiclassical limit: critical behavior near a separatrix, 
{\it J. Anal. Math.} 
{\bf 118}, 
397--492 
(2012).

\bibitem{BuckinghamM:2015}
R. Buckingham and P. Miller,
Large-degree asymptotics of rational Painlev\'e-II functions:  critical behaviour,
{\it Nonlinearity}
{\bf 28},
1539--1596
(2015).

\bibitem{ChesterFU:1957}
C. Chester, B. Friedman, and F. Ursell,
An extension of the method of steepest descents,
{\it Proc. Camb. Philos. Soc.}
{\bf 53},
599--661
(1957).

\bibitem{ClaeysG:2010}
T. Claeys and T. Grava,
Solitonic asymptotics for the Korteweg-de Vries equation in the small dispersion limit,
{\it SIAM J. Math. Anal.}
{\bf 42},
2132--2154
(2010).

\bibitem{ClaeysKV:2008}
T. Claeys, A. Kuijlaars, and M. Vanlessen,
Multi-critical unitary random matrix ensembles and the general Painlev\'e II equation,
{\it Ann. of Math. (2)}
{\bf 167},
601--641
(2008).

\bibitem{Deift:1998}
P. Deift,
{\it Orthogonal Polynomials and Random Matrices:  A Riemann-Hilbert Approach}.
Courant Lecture Notes in Mathematics 
{\bf 3}
(1998).
Amer. Math. Soc., Providence, RI.

\bibitem{DeiftKMVZ:1999}
P. Deift, T. Kriecherbauer, K. McLaughlin, S. Venakides, and X. Zhou,
Uniform asymptotics for polynomials orthogonal with respect to varying exponential weights and applications to universality questions in random matrix theory,
{\it Comm. Pure Appl. Math.}
{\bf 52},
1335--1425
(1999).

\bibitem{DeiftZ:1993}
P. Deift and X. Zhou,
A steepest descent method for oscillatory Riemann-Hilbert problems. Asymptotics for the MKdV equation,
\textit{Ann. of Math. (2)}
\textbf{137},
295--368
(1993).

\bibitem{DeiftZ:1995}
P. Deift and X. Zhou,
Asymptotics for the Painlev\'e II equation,
{\it Comm. Pure Appl. Math.}
{\bf 48},
277--337
(1995).

\bibitem{Delvaux:2013}
S. Delvaux,
The tacnode kernel: equality of Riemann-Hilbert and Airy resolvent formulas,
arXiv:1211.4845v2 
(2013).

\bibitem{Delvaux:2013b}
S. Delvaux, 
Non-intersecting squared Bessel paths at a hard-edge tacnode,
{\it Comm. Math. Phys.}
{\bf 324}, 
715--766
(2013).

\bibitem{DelvauxGZ:2013}
S. Delvaux, D. Geudens, and L. Zhang,
Universality and critical behaviour in the chiral two-matrix model,
{\it Nonlinearity}
{\bf 26},
2231--2298
(2013).

\bibitem{DelvauxKZ:2011}
S. Delvaux, A. Kuijlaars, and L. Zhang, 
Critical behavior of nonintersecting Brownian motions at a tacnode,
{\it Comm. Pure Appl. Math.} 
{\bf 64}, 
1305--1383
(2011).

\bibitem{DesrosiersF:2008}
P. Desrosiers and P. Forrester, 
A note on biorthogonal ensembles, 
{\it J. Approx. Theory} 
{\bf 152}
167--187 
(2008).

\bibitem{DuitsG:2013}
M. Duits and D. Geudens,
A critical phenomenon in the two-matrix model in the quartic/quadratic case, 
{\it Duke Math. J.}
{\bf 162},
1383--1462
(2013).

\bibitem{Ferrari:2012}
P. Ferrari and B. Vet\H o,
Non-colliding Brownian bridges and the asymmetric tacnode process,
{\it Electron. J. Probab.}
{\bf 17},
no. 44
(2012).

\bibitem{FlaschkaN:1980}
H. Flaschka and A. Newell,
Monodromy and spectrum-preserving deformations I,
{\it Comm. Math. Phys.}
{\bf 76},
65--116
(1980).

\bibitem{FokasIKN:2006}
A. Fokas, A. Its, A. Kapaev, and V. Novokshenov,
{\it Painlev\'e Transcendents.  The Riemann-Hilbert Approach.}
AMS Mathematical Surveys and Mongraphs
{\bf 128}
(2006).
Amer. Math. Soc., 
Providence, RI.

\bibitem{FokasIK:1991}
A. Fokas, A. Its, and A. Kitaev,
Discrete Painlev\'e equations and their appearance in quantum gravity,
{\it Comm. Math. Phys.}
{\bf 142},
313--344
(1991).

\bibitem{GeudensZ:2015}
D. Geudens and L. Zhang,
Transitions between critical kernels: from the tacnode kernel and critical kernel in the two-matrix model to the Pearcey kernel,
{\it Int. Math. Res. Not. IMRN}
{\bf 2015},
5733--5782
(2015).

\bibitem{Girotti:2014}
M. Girotti,
Asymptotics of the tacnode process: a transition between the gap probabilities from the tacnode to the Airy process,
{\it Nonlinearity}
{\bf 27},
1937--1968
(2014).

\bibitem{ItsK:2003}
A. Its and A. Kapaev, Quasi-linear Stokes phenomenon for the second Painlev\'e transcendent,
{\it Nonlinearity}
{\bf 16},
363--386
(2003).

\bibitem{JimboM:1981}
M. Jimbo and T. Miwa, Monodromy preserving deformation of linear ordinary differential equations with rational coefficients. II,
{\it Physica D}
{\bf 2},
407--448
(1981).

\bibitem{Johansson:2013}
K. Johansson, 
Non-colliding Brownian motions and the extended tacnode process,
{\it Comm. Math. Phys.}
{\bf 319}, 
231--267
(2013).

\bibitem{Kuijlaars:2014}
A. Kuijlaars,
The tacnode Riemann-Hilbert problem,
{\it Constr. Approx.} 
{\bf 39},
197--222
(2014).

\bibitem{KuijlaarsMW:2011}
A. Kuijlaars, A. Mart\'inez-Finkelshtein, and F. Wielonsky, 
Non-intersecting squared Bessel paths: critical time and double scaling limit,
{\it Comm. Math. Phys.} 
{\bf 308},
227--279 
(2011).

\bibitem{Liechty:2012}
K. Liechty,
Nonintersecting Brownian motions on the half line and discrete Gaussian orthogonal polynomials,
{\it J. Stat. Phys.}
{\bf 147},
582--622
(2012).

\bibitem{LiechtyW:2016}
K. Liechty and D. Wang,
Nonintersecting Brownian motions on the unit circle,
{\it Ann. Probab.}
{\bf 44},
1134--1211
(2016).

\bibitem{LiechtyW:2016b}
K. Liechty and D. Wang,
Two Lax systems for the Painlev\'e II equation, and two related kernels in random matrix theory,
{\it SIAM J. Math. Anal.}
{\bf 48},
3618--3666
(2016).

\bibitem{LiechtyW:2017}
K. Liechty and D. Wang,
Nonintersecting Brownian bridges between reflecting or absorbing walls,
{\it Adv. Math.}
{\bf 309},
155--208
(2017).

\bibitem{Mehta:2004}
M. Mehta,
{\it Random Matrices}, 3rd ed.
(2004).
Elsevier/Academic Press,
Amsterdam.

\bibitem{TracyW:1994}
C. Tracy and H. Widom,
Level-spacing distributions and the Airy kernel, 
{\it Comm. Math. Phys.} 
{\bf 159}, 
151--174 
(1994).

\bibitem{TracyW:2006}
C. Tracy and H. Widom,
The Pearcey process,
{\it Comm. Math. Phys.}
{\bf 263},
381--400
(2006).

\bibitem{TracyW:2007}
C. Tracy and H. Widom,
Nonintersecting Brownian excursions,
{\it Ann. Appl. Probab.}
{\bf 17},
953--979
(2007).

\begin{comment}

\bibitem{BleherL:2011}
P. Bleher and K. Liechty,
Uniform asymptotics for discrete orthogonal polynomials with respect to varying exponential weights on a regular infinite lattice. 
{\it Int. Math. Res. Not. IMRN} 
{\bf 2011},
342–-386
(2011).

\bibitem{BleherL:2014}
P. Bleher and K. Liechty,
{\it Random Matrices and the Six-Vertex Model.}
CRM Monograph Series 
{\bf 32}
(2014).
Amer. Math. Soc., 
Providence, RI.

\bibitem{DeiftKMVZ:1999}
P. Deift, T. Kriecherbauer, K. McLaughlin, S. Venakides, and X. Zhou,
Uniform asymptotics for polynomials orthogonal with respect to varying exponential weights and applications to universality questions in random matrix theory,
{\it Comm. Pure Appl. Math.}
{\bf 52},
1335--1425
(1999).

\bibitem{Dyson:1962} 
F. Dyson,
A Brownian-motion model for the eigenvalues of a random matrix,
{\it J. Math. Phys.}
{\bf 3},
1191--1198
(1962).

\bibitem{Szego:1975}
G. Szego,
{\it Orthogonal Polynomials},
4th ed.
Amer. Math. Soc.,
Providence, RI.

\end{comment} 

\end{thebibliography}
\end{document}